\theoremstyle{definition}
\newtheorem{theorem}{Theorem}[section]
\newtheorem{proposition}[theorem]{Proposition}
\newtheorem{claim}[theorem]{Claim}
\newtheorem{subclaim}{subclaim}[theorem]
\newtheorem{lemma}[theorem]{Lemma}
\newtheorem{corollary}[theorem]{Corollary}
\newtheorem{remark}[theorem]{Remark}
\newtheorem{definition}[theorem]{Definition}
\newcommand{\mo}{\triangleleft}
\newcommand{\Ult}{\operatorname{Ult}}
\newcommand{\fr}{{}^\frown}
\newcommand{\name}{\dot}
\newcommand{\can}{\check}
\newcommand{\force}{\Vdash}
\newcommand{\la}{\langle}
\newcommand{\ra}{\rangle}
\newcommand{\uhr}{\upharpoonright}
\newcommand{\power}{\mathcal{P}}
\newcommand{\po}{\mathcal{P}}
\newcommand{\qo}{\name{{Q}}}
\newcommand{\U}{\mathcal{U}}
\newcommand{\K}{\mathcal{K}}
\newcommand{\isom}{\simeq}
\newcommand{\pzero}{\mathcal{P}^0}
\newcommand{\onto}{\twoheadrightarrow}
\renewcommand{\i}{j^1}
\newcommand{\zerohandgrenade}{0^{\: \mid^{\! \! \! \bullet}}}
\renewcommand{\pzero}{\mathcal{P}^0}
\newcommand{\pone}{\mathcal{P}^1}
\newcommand{\ptwo}{\mathcal{P}^2}
\newcommand{\qzero}{\mathcal{Q}^0}
\newcommand{\qone}{\mathcal{Q}^1}
\newcommand{\qtwo}{\mathcal{Q}^2}
\newcommand{\Gzero}{G^0}
\newcommand{\Gone}{G^1}
\newcommand{\Gtwo}{G^2}
\renewcommand{\c}{c}
\newcommand{\T}{T}
\newcommand{\F}{\mathcal{F}}
\newcommand{\pX}{\mathcal{P}^X}
\newcommand{\GX}{G^X}
\newcommand{\UX}[2]{U^X_{(#1,#2)}}
\DeclareMathOperator{\Sacks}{Sacks}
\DeclareMathOperator{\Code}{Code}
\DeclareMathOperator{\Coll}{Coll}
\DeclareMathOperator{\supp}{supp}
\DeclareMathOperator{\On}{On}
\DeclareMathOperator{\cp}{cp}
\DeclareMathOperator{\len}{len}
\DeclareMathOperator{\Cf}{Cof}
\DeclareMathOperator{\dom}{dom}
\DeclareMathOperator{\rank}{rank}
\DeclareMathOperator{\GCH}{GCH}
\DeclareMathOperator{\tamerank}{Trank}
\renewcommand{\u}{\bar{u}}
\newcommand{\red}{\textcolor[rgb]{1.00,0.00,0.00}}
\newcommand{\blue}{\textcolor[rgb]{0.00,0.00,1.00}}
\title{The structure of the Mitchell order - II}
\author{Omer Ben-Neria 
\footnote{The paper is a part of the author Ph.D. written in Tel-Aviv University under the supervision of
Professor Moti Gitik.}}
\date{{}}
\begin{document}
\maketitle
\begin{abstract}
We address the  question regarding the structure of the Mitchell order
on normal measures. We show that every well founded order can be realized as the Mitchell 
order on a measurable cardinal $\kappa$, from some large cardinal assumption.
\end{abstract}

\section{Introduction}\label{Section - Introduction}
In this paper we address the question regarding the possible structure
of the Mitchell order $\mo$, at a measurable cardinal $\kappa$.
The Mitchell order was introduced by William Mitchell in \cite{Mitchell - MO}, who showed it is a well-founded order. The question whether every
well-founded order can be realized as the Mitchell order on the
set of some specific measurable cardinal, has been open since. 
By combining various forcing techniques with inner model 
theory we succeed in constructing models which realize every well-founded
order as the Mitchell order on a measurable cardinal.

Given two normal measures
$U,W$, we write $U \mo W$ to denote that
$U \in M_W \cong \Ult(V,W)$. 
For every measurable cardinal $\kappa$, let $\mo(\kappa)$ be  
the restriction of $\mo$ to the set of normal measures on $\kappa$,
and let $o(\kappa) = \rank(\mo(\kappa))$  be its (well-foundedness) rank.

The research on the possible structure on the Mitchell order $\mo(\kappa)$ is closely related to the question of its possible size, namely, 
the number of normal measures on $\kappa$: The first results by Kunen \cite{Kunen} and by Kunen and Paris \cite{Kunen-Paris} showed that this number 
can take the extremal values of $1$ and $\kappa^{++}$ (in a model of $\GCH$) respectively.
Soon after, Mitchell \cite{Mitchell - MO} \cite{Mitchell - MO-rev} showed that this size 
can be any cardinal $\lambda$ between $1$ and $\kappa^{++}$, under the large cardinal assumption and in a model of $o(\kappa) = \lambda$. 
Baldwin \cite{Baldwin} showed that for $\lambda < \kappa$ and from stronger large cardinal assumptions, $\kappa$ can also be the first measurable cardinal.
Apter-Cummings-Hamkins \cite{Apter Cummings Hamkins} proved that there can be $\kappa^+$ normal measures on $\kappa$ from the minimal assumption 
of a single measurable cardinal; for $\lambda < \kappa^+$, Leaning \cite{Leaning} reduced the large cardinal assumption from $o(\kappa) = \lambda$ 
to an assumption weaker than $o(\kappa) = 2$. The question of the possible number of normal measures on $\kappa$ was finally resolved by 
Friedman and Magidor in \cite{Friedman-Magidor - Normal Measures}, were it is shown that $\kappa$ can carry any number of normal measures 
$1 \leq \lambda \leq \kappa^{++}$ from the minimal assumption. The Friedman-Magidor method will be extensively used in this work.

Further results where obtained on the possible structure of the Mitchell order: Mitchell \cite{Mitchell - MO} and Baldwin \cite{Baldwin} showed that 
from some large cardinal assumptions, every well-order and pre-well-order (respectively) can be isomorphic to $\mo(\kappa)$ at some $\kappa$. 
Cummings \cite{Cummings I},\cite{Cummings II}, and Witzany \cite{Witzany} studied the $\mo$ ordering in various generic extensions, 
and showed that $\mo(\kappa)$ can have a rich structure.  
Cummings constructed models where $\mo(\kappa)$ embeds every order from a specific family of orders we call tame. Witzany showed that in a Kunen-Paris extension of a Mitchell model $L[\mathcal{U}]$, with $o^{\U}(\kappa) = \kappa^{++}$, every well-founded order of cardinality $\leq \kappa^+$ embeds into $\mo(\kappa)$. \\

In this paper, the main idea for realizing well-founded orders as $\mo(\kappa)$ is to force over an extender model $V = L[E]$ with a sufficiently rich $\mo$ structure on a set of extenders at $\kappa$. 
 {By forcing over $V$ we can collapse the generators of these extenders, giving rise to extensions of these extenders, which are equivalent to ultrafilters on $\kappa$.} 
The possible structure of the Mitchell order on arbitrary extenders was previously studied by Steel \cite{Steel - MO} and Neeman \cite{Neeman - MO} who showed that the well-foundedness of the Mitchell order fails exactly at the level of a rank-to-rank extenders.

For the most part, the extenders on $\kappa$ which will be used, do not belong to the main sequence $E$. Rather, they are of the form $F' = i_\theta(F)$, where $F \in E$ overlaps a measure on a cardinal $\theta > \kappa$, and $i_\theta$ is an elementary embedding with $\cp(i_\theta) = \theta$.  
There is a problem though; the extenders $F'$ may not be $\kappa-$complete.
To solve this we incorporate an additional forcing extension by which $F'$
will generically regain its missing sequence of generators\footnote{I.e., while $M_{F'} \cong \Ult(V,F')$ may not be closed under $\kappa-$sequences, its embedding $j_{F'} : V \to M_{F'}$ will extend in $V[G]$ to $j' : V[G] \to M_{F'}[G']$ so that ${}^\kappa M_{F'}[G'] \subset M_{F'}[G']$.}.\\
Forcing the above would translate the $\mo$ on certain extenders $F'$, to $\mo(\kappa)$; however some of the normal measures in the resulting model will be unnecessary and will need to be destroyed.  
This will be possible since the new normal measures on $\kappa$ are separated by sets\footnote{Namely,
we can associate to each normal measure $U$ on $\kappa$ a set 
$X_U \subset \kappa$, which is not contained in any distinct normal measure.} 
 allowing us to remove the undesired normal measures in an additional generic extension, which we refer to as a \emph{final cut}. \\
 
The combination of these methods will be used to prove the main result:
\begin{theorem}\label{MO II - main theorem}
Let $V = L[E]$ be a core model. Suppose that $\kappa$ is a cardinal in $V$ and $(S,<_S)$ is a well-founded order of cardinality $\leq \kappa$, so that
  \begin{enumerate}
  \item there are at least $|S|$ measurable cardinals above $\kappa$; let $\theta$ be the supremum of the successors of the first $|S|$,
  \item there is a $\mo-$increasing sequence of $(\theta+1)-$ strong extenders $\vec{F} = \la F_\alpha \mid \alpha < \small{\rank(S,<_S)}\ra$
  \end{enumerate}
  Then there is a generic extension $V^*$ of $V$ in which $\mo(\kappa)^{V^*} \cong (S,<_S)$.
 \end{theorem}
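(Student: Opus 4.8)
The plan is to split the construction into three stages: first, realize $(S,<_S)$ in the ground model $V=L[E]$ as the Mitchell order on a family of extenders concentrating on $\kappa$; second, force, in the style of \cite{Friedman-Magidor - Normal Measures}, to turn each such extender into a normal measure on $\kappa$ while preserving the $\mo$-relations among them; and third, perform a \emph{final cut} destroying every other normal measure on $\kappa$.

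For the first stage I would fix the rank function $\rho\colon S\to\rank(S,<_S)$ and an injection $\pi\colon S\hookrightarrow\kappa$, and for each $s\in S$ build an extender $F'_s$ on $\kappa$ of the form $F'_s = i_\theta(F_{\rho(s)})$ (or a variant thereof), where $i_\theta$ has $\cp(i_\theta)=\theta$ and is obtained from iterations of the normal measures on the first $|S|$ measurable cardinals $\kappa<\delta_0<\delta_1<\cdots$ above $\kappa$. The $\mo$-increasing sequence $\vec F$ supplies the \emph{vertical} structure: $\Ult(V,F_{\rho(s)})$ already contains every $F_\beta$ with $\beta<\rho(s)$, hence all of the lower $F'_t$; the measurable cardinals, together with a choice of which of them to push through, then let me encode the predecessor set $\{\pi(t)\mid t<_S s\}$ into $F'_s$ so that $F'_s\mo F'_t$ holds \emph{exactly} when $s<_S t$. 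The hypothesis that the $F_\alpha$ are $(\theta+1)$-strong is used here to guarantee $V_{\theta+1}\subseteq\Ult(V,F'_s)$, so that $\Ult(V,F'_s)$ correctly computes the relevant measures on the $\delta_i<\theta$ and recognizes the coded predecessor data together with all of the lower extenders in the family.

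In the second stage I would run a reverse-Easton iteration that, for each $s\in S$, collapses the generators of $F'_s$ so that the induced ultrafilter $U_s$ on $\kappa$ is a genuine normal measure in the extension $V[G]$; where $F'_s$ fails to be $\kappa$-complete (so $\Ult(V,F'_s)$ is not closed under $\kappa$-sequences), an auxiliary forcing adds a generic sequence of missing generators so that $j_{F'_s}$ lifts to some $j'\colon V[G]\to M_{F'_s}[G']$ with ${}^\kappa M_{F'_s}[G']\subseteq M_{F'_s}[G']$. Arranging the lifts by master-condition and surgery arguments, one wants $U_s\in M_{U_t}$ in $V[G]$ to hold iff $F'_s\mo F'_t$ in $V$, i.e.\ iff $s<_S t$, so that $\mo(\kappa)^{V[G]}$ contains an isomorphic copy of $(S,<_S)$ — possibly together with extra, unwanted normal measures. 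The iteration is moreover designed so that each $U_s$ carries a set $X_{U_s}\subseteq\kappa$ separating it from every other normal measure on $\kappa$.

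For the third stage, the final cut, I would force further to destroy every normal measure $W$ on $\kappa$ outside $\{U_s\mid s\in S\}$; this is possible precisely because the $U_s$ are separated by the sets $X_{U_s}$, so the unwanted $W$'s can be killed (e.g.\ by shooting suitable clubs) without disturbing the $U_s$ or the $\mo$-relations among them, yielding a model $V^*$ with $\mo(\kappa)^{V^*}\cong(S,<_S)$. The main obstacle is the second stage: designing a \emph{single} iteration that collapses exactly the right generators — without collapsing $\kappa$ itself or identifying distinct $U_s$ — that generically completes the non-$\kappa$-complete $F'_s$ and lifts their embeddings, that preserves the Mitchell order \emph{exactly} so that no spurious $\mo$-relations among the $U_s$ appear, and that still leaves the separating sets $X_{U_s}$ intact for the final cut; verifying that the lifted embeddings genuinely witness $U_s\mo U_t$ when $s<_S t$, and nothing beyond this, is the technical heart of the argument.
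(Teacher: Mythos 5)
Your three-stage outline --- derived extenders $F' = i_\theta(F_\alpha)$ obtained by iterating the overlapping measures, a forcing that collapses generators and restores $\kappa$-completeness, then a final cut --- is exactly the strategy the paper follows, and your self-assessment that the second stage is ``the technical heart'' and is not filled in is accurate. But precisely because that stage is left as a black box, the proposal is a restatement of the paper's introduction rather than a proof; the content of the theorem lives in what you are waving at. Here is what is missing, concretely.

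First, the reduction: rather than fixing an injection $\pi : S \hookrightarrow \kappa$ and ``encoding the predecessor set,'' the paper proves (Lemma \ref{II Lemma - embedding S in R^*}) that any well-founded $(S,<_S)$ of rank $\rho$ and size $\leq\lambda$ embeds into the universal order $R^*_{\rho,\lambda} = \rho \times {}^\lambda 2$, where $(\rho_0,c_0) <_{R^*} (\rho_1,c_1)$ iff $\rho_0 < \rho_1$ and $c_0 \geq c_1$, via $x \mapsto (\rank_S(x), c_x)$ with $c_x$ the characteristic function of the \emph{up-set} $\u(x) = \{y : x \leq_S y\}$. This specific combinatorics is what $\mo$ on the measures $U^2_{\alpha,c,\delta}$ is then shown to realize (Proposition \ref{Proposition - II - MO in V2}): $U^2_{\alpha',c',\delta'} \mo U^2_{\alpha,c,\delta}$ iff $\alpha' < \alpha$ \emph{and} $c' \geq c$. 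Your proposal omits the ``$c'\geq c$'' half of the equivalence, which is why the coding via up-sets (not predecessor sets, nor a raw injection into $\kappa$) is essential. Second, you omit the Friedman--Magidor Sacks forcing $\pzero$, which splits each extender on $\kappa$ into $\lambda$ many $\mo$-equivalent extensions $F^0_{\alpha,x}$ distinguished by sets $\Omega'_x \subseteq \kappa$. This is what tells the subsequent Prikry machinery which measures to touch below $\kappa$ on behalf of which extender; without it there is no mechanism to prevent the normal measures in the extension from collapsing together or acquiring spurious $\mo$-relations. Third, the ``auxiliary forcing adding missing generators'' is not a reverse-Easton master-condition/surgery iteration but a Magidor iteration $\pone$ of one-point Prikry forcings (at each $\Theta_j(\nu)$ for $\nu\in\Omega'_x$, $j\in x$); the verification that the resulting $F^1_{\alpha,c,\delta}$ are $\kappa$-complete (Lemma \ref{Lemma - II - F_omega is complete}), and that every normal measure on $\kappa$ in $V^2$ is one of these (Theorem \ref{Proposition - II - W = U c delta -  the omega case}, via a delicate coiteration-plus-Prikry-combinatorics argument), is the bulk of the work. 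Finally, your phrasing that one should arrange $U_s \mo U_t$ in $V[G]$ ``iff $F'_s \mo F'_t$ in $V$'' is a category error: the $F'_s$ are not $\kappa$-complete in $V$, so $\mo$ on them is not the relevant notion; the $\mo$-relations among the $U^2_{\alpha,c,\delta}$ are established directly in $V^2$ by comparing the core models $\K(M^2_{\alpha,c,\delta})$, not inherited from a preexisting order on the $F'$'s.
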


\noindent In particular, if $E$ contains a $\mo-$increasing sequence of extenders $\vec{F} = \la F_\alpha \mid \alpha < \kappa^+\ra$, so that each $F_\alpha$ overlaps the first $\kappa$ measurable cardinals above $\kappa$, then 
  every well founded order $(S,<_S) \in V$ of cardinality $\leq \kappa$ can be realized as $\mo(\kappa)$ in a generic extension of $V$.
   As an immediate corollary of the proof we have that under slightly stronger large cardinal assumptions, including a class of cardinals $\kappa$
  carrying similar overlapping extenders,  there is a class forcing extension $V^*$ in which every well founded order $(S,<_S)$ is isomorphic to 
  $\mo(\kappa)$ for some $\kappa$.\\
 
\noindent This paper is the second of a two-parts study on $\mo$. In the first part  
 \cite{OBN - Mitchell order I}, a wide family of well-founded orders named tame orders, was isolated  and it was shown that every tame order of cardinality at most $\kappa$ can be realized from an assumption weaker than $o(\kappa) = \kappa^+$.\\
The principle characteristic of tame orders is that they do not embed two specific orders: $R_{2,2}$ on a set of four elements, and $S_{\omega,2}$ on a countable set.

In section \ref{Section - II - First Overlapping} we consider  
a first example in which $V = L[E]$ contains a $\mo-$increasing sequence of extenders in $\kappa$, overlapping a single measure on a  cardinal $\theta > \kappa$. We use some specific $\mo-$configurations in $L[E]$ to produce models
which realize $R_{2,2}$ and $S_{\omega,2}$ as $\mo(\kappa)$. 
We also construct a model in which $o(\kappa) = \omega$ but there
is no $\omega$ sequence in $\mo(\kappa)$. In Section \ref{II Section - maintheorem} 
we extend our overlapping framework to models containing extenders $\vec{F} = \la F_\alpha\mid \alpha < l(\vec{F})\ra$ which overlap a 
sequence of measurable cardinals $\vec{\theta} = \la \theta_i \mid i < \omega\ra$. 
It is in this framework where we first need to deal with non-complete extenders of the form $F_{\alpha,c} = i_c(F_\alpha)$, where $i_c$ result from iterated ultrapowers by measures on the cardinals in $\vec{\theta}$. We introduce a poset designed to force the completeness of (extensions of) $F_{\alpha,c}$, 
and combine our methods to prove the main theorem. 
Finally, in {Section \ref{Section - II - Open Problems} we list further questions.}

For the most part, the notations in this paper continue the conventions in \cite{OBN - Mitchell order I}. Note that we use the Jerusalem convention for the forcing order, in which $p \geq q$ means that the condition $p$ is stronger than $q$. For every $(\kappa,\lambda)-$extender $F$, and $\gamma < \lambda$, 
we denote the $\gamma-$th measure in $F$ by $F(\gamma)$.

\section{A First Use of Overlapping Extenders}\label{Section - II - First Overlapping}

In this section we consider the the Mitchell order in an extender model $V = L[E]$  
in the sense of  \cite{Steel - HB}, 
which is minimal with respect to a certain large cardinal property.

\noindent An extender $F$ on the coherent sequence $E$ has a critical point $\cp(F)$,
and support \[\nu(F) = \sup(\{\kappa^+\} \cup \{\xi + 1 \mid \xi \text{ is a generator of } F\}) .\]
$\xi$ is a generator of $F$ if there are no $a \in [\xi]^{<\omega}$ and $f \in V$
so that $\xi = [f,a]_F$.
The index $\alpha = \alpha(F)$ of $F$ on the main sequence $E$ (i.e., $F = E_\alpha$) is given by 
$\alpha = (\nu(F)^+)^{\Ult(F,L[E\uhr\alpha])}$

Our large cardinal assumptions of $V = L[E]$ include the following requirements:
\begin{enumerate}
\item There are measurable cardinals $\kappa < \theta$ where $\theta$ is the first measurable above $\kappa$.
\item There is a $\mo-$increasing sequence $\vec{F} = \la F_\alpha \mid \alpha < l(\vec{F})\ra$, $l(\vec{F}) < \theta$, of $(\kappa,\theta^{++})-$extenders. 
\item Each $F_\alpha$ is $(\theta+2)-$strong, i.e., $V_{\theta+2} \subset \Ult(V,F_\alpha)$.
\item $\vec{F}$ consists of all full $(\kappa,\theta^{++})-$extenders on the main sequence $E$. 
\item There are no stronger extenders on $\kappa$ in $E$ ($o(\kappa) = \theta^{++} + l(\vec{F})$).
\item There are no extenders $F \in E$ so that $\cp(F) < \kappa$ and $\nu(F) \geq \kappa$.
\end{enumerate}

\begin{remark}
\begin{enumerate}
\item
The overlapping assumptions described here are in the realm of almost-linear iterations (i.e., weaker than zero hand-grenade, $\zerohandgrenade$). The existence of the core model for such large cardinal assumptions are established in \cite{Schindler - ICM}.

\item $U$ is equivalent to an extender on the main sequence $E$, where $\nu(U) = \theta^+$, thus $\alpha(U) < {\theta^{++}}^V$. In particular, the trivial completion of $U$ appears before $F_0 \in \vec{F}$ on the main sequence $E$. This fact will be used in the proof of Proposition \ref{Proposition - II - MO ordering by P}, where we coiterate certain ultrapowers of $L[E]$ by iterating the least disagreement.
\end{enumerate}
\end{remark}

\begin{definition}[$i_n$, $\theta_n$, $F_{\alpha,n}$,$\Theta$]${}$
\begin{enumerate}
 \item For every $\alpha < l(\vec{F})$ let $j_{F_{\alpha}}: V \to M_{F_\alpha} \cong \Ult(V,F_\alpha)$
 be the induced ultrapower embedding.
 We point out that the fact $F_\alpha$ is $(\theta+2)-$strong guarantees that $U \in M_{F_{\alpha}}$.
 
 \item
  For every $n < \omega$, let $i_n : V \to N_n \cong \Ult^n(V,U)$
  be the $n-$th ultrapower embedding by $U$ and
  $\theta_n = i_n(\theta)$. Clearly, $\theta_n$ is the first measurable cardinal above $\kappa$ in $N_n$. 
  
  \item For every $\alpha < \len(\vec{F})$ and $n < \omega$, define $F_{\alpha,n} = i_n(F_\alpha)$.
  $F_{\alpha,n}$ is a $(\kappa,{\theta^{++}}^V)$ extender on $\kappa$ in $N_n$ since both $\kappa$ and ${\theta^{++}}^V$ are fixed points of $i_n$. \\
  
  $F_{\alpha,n}$ is clearly a $\kappa-$complete extender on $\kappa$ in $N_n$. Since $V$ and $N_n$ have the same subsets of $\kappa$ it follows that $F_{\alpha,n}$ is an extender of $V$ as well. 
  
  \item For every $\nu < \kappa$, let $\Theta(\nu)$ be the first
  measurable cardinal above $\nu$ for every $\nu< \kappa$. Since $\Theta(\nu)$ is not a limit of measurable cardinals it  carries a unique normal measure denoted by $U_{\Theta(\nu)}$.
\end{enumerate}
\end{definition}

\begin{lemma}
For every $\alpha < l(\vec{F})$ and $n < \omega$, $F_{\alpha,n}$ is the $(\kappa,\theta^{++})-$extender derived in $V$ from the
composition $i_n^{M_{F_\alpha}} \circ j_{F_\alpha} : V \to M_{\alpha,n}$
where $M_{\alpha,n} \cong \Ult^{(n)}(M_{F_{\alpha}},U)$.
\end{lemma}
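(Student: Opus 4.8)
The plan is to recognize this as an instance of the interchanging of iterated ultrapowers: iterating $V$ first by the $\kappa$-extender $F_\alpha$ and then by the measure $U$ on $\theta>\kappa$ ought to produce, up to a canonical factor map, the same outcome as iterating first $n$ times by $U$ and then once by the image $F_{\alpha,n}=i_n(F_\alpha)$. The ingredients are: $U\in M_{F_\alpha}$ (the role of the $(\theta+2)$-strength of $F_\alpha$); $\kappa$ and $\theta^{++}$ are fixed points of each $i_m$, so that $F_{\alpha,n}$ is honestly a $(\kappa,\theta^{++})$-extender computed the same way in $V$ and in $N_n$, as already observed; and the fine-structural agreement of $V=L[E]$ with $M_{F_\alpha}=\Ult(V,F_\alpha)$ below $\nu(F_\alpha)=\theta^{++}$.

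First I would reduce to a comparison of two composite embeddings. Since $M_{F_\alpha}=\Ult(V,F_\alpha)$ and $j_{F_\alpha}$ are uniformly definable from the parameter $F_\alpha$, elementarity of $i_n$ gives $i_n(M_{F_\alpha})=\Ult(N_n,F_{\alpha,n})$ and $i_n(j_{F_\alpha})=j^{N_n}_{F_{\alpha,n}}$, whence $j^{N_n}_{F_{\alpha,n}}\circ i_n=i_n\circ j_{F_\alpha}$ (the application of the class term $j_{F_\alpha}$ commutes with $i_n$). As $i_n$ fixes every $X\in\mathcal{P}([\kappa]^{<\omega})$, the $(\kappa,\theta^{++})$-extender derived from $i_n\circ j_{F_\alpha}$ is exactly $F_{\alpha,n}$: for $X\subseteq[\kappa]^{<\omega}$ and $a\in[\theta^{++}]^{<\omega}$, $a\in(i_n\circ j_{F_\alpha})(X)=j^{N_n}_{F_{\alpha,n}}(i_n(X))=j^{N_n}_{F_{\alpha,n}}(X)$, i.e.\ $X\in(F_{\alpha,n})_a$. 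Hence it suffices to show that $i_n\circ j_{F_\alpha}$ and $i_n^{M_{F_\alpha}}\circ j_{F_\alpha}$ induce the same extender of length $\theta^{++}$, i.e.\ that $a\in i_n(j_{F_\alpha}(X))\iff a\in i_n^{M_{F_\alpha}}(j_{F_\alpha}(X))$ for all such $a,X$.

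For this I would use the canonical factor embedding $\ell\colon M_{\alpha,n}\to N_n$ with $\ell\circ i_n^{M_{F_\alpha}}=i_n\uhr M_{F_\alpha}$, built stagewise by the usual copying construction from the map $[f]^{M_{F_\alpha}}_U\mapsto[f]^{V}_U$ (using $U\in M_{F_\alpha}$, and that each image of $U$ in the internal iteration is sent by the partial factor map to the corresponding image of $U$ externally). Then $i_n(Y)=\ell(i_n^{M_{F_\alpha}}(Y))$ for $Y=j_{F_\alpha}(X)\in M_{F_\alpha}$, so once $\ell$ is shown to fix every ordinal below $\theta^{++}$ the displayed equivalence is immediate from the elementarity of $\ell$ and $\ell(a)=a$. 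To see $\ell\uhr\theta^{++}=\mathrm{id}$: by $\GCH$ in $L[E]$ every critical point $\theta_m=i_m(\theta)$ occurring in the iteration of $U$ lies below $\theta^{++}$ (inductively $|\theta_m|\le\theta^+$, and $\theta^{++}$ is regular), and $\theta^{++}$ is a common fixed point; the fine structure of $L[E]$ and $F_\alpha$ being a \emph{full} extender on the sequence with $\nu(F_\alpha)=\theta^{++}$ give $\mathcal{P}(\xi)^{M_{F_\alpha}}=\mathcal{P}(\xi)^{V}$ for all $\xi<\theta^{++}$; and, under $\GCH$, any function from an ordinal $<\theta^{++}$ to an ordinal $<\theta^{++}$ is coded by a subset of some $\xi<\theta^{++}$, so $M_{F_\alpha}$ and $V$ — and, inductively, each pair $M_{\alpha,m},N_m$ — carry the same such functions. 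A routine induction on $\gamma<\theta^{++}$, representing an ordinal $\gamma$ of an iterate as $[f]$ with $f$ bounded below $\theta^{++}$ and noting that all functions witnessing smaller ordinals then already lie in the smaller model, shows that the partial factor maps, hence $\ell$, move no ordinal below $\theta^{++}$.

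The genuine content — and what I expect to be the main obstacle — is precisely this last point: that the internal iteration of $M_{F_\alpha}$ by $U$ agrees with the external one on ordinals below $\theta^{++}$. The delicate part is the agreement of $M_{F_\alpha}$ with $V$ on subsets of the cardinals in $[\theta,\theta^{++})$ and its propagation — keeping the critical point of every partial factor map at or above $\theta^{++}$ — through the iteration stages; this uses $\GCH$ and the closure of $L[E]$, not the $(\theta+2)$-strength of $F_\alpha$, which only serves to place $U$ inside $M_{F_\alpha}$. Everything else is formal.
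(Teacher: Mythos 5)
Your proof is correct but takes a genuinely different route from the paper's. The paper argues the $n=1$ case by a direct \L o\'s computation: represent an ordinal $\gamma < \theta^{++}$ of $N_1 \cong \Ult(V,U)$ as $i_1(g)(\theta)$ with $g : \theta \to \theta^{++}$ in $V$, unwind $X \in F_{\alpha,1}(\gamma)$ via \L o\'s applied to $i_1$ (using $F_{\alpha,1}=i_1(F_\alpha)$) to get $\{\nu<\theta : X \in F_\alpha(g(\nu))\}\in U$, and then observe — using $V_{\theta+2}\subset M_{F_\alpha}$ to place $g$ inside $M_{F_\alpha}$ — that the same $g$ represents $\gamma$ in $\Ult(M_{F_\alpha},U)$, so that \L o\'s for $i_1^{M_{F_\alpha}}$ yields the identical condition; general $n$ is declared "similar." You instead split the work cleanly: elementarity of $i_n$ gives formally that $i_n\circ j_{F_\alpha}$ (externally) derives $F_{\alpha,n}$, and you isolate the genuine content as the factor map $\ell : M_{\alpha,n}\to N_n$ with $\ell\circ i_n^{M_{F_\alpha}} = i_n\uhr M_{F_\alpha}$ being the identity below $\theta^{++}$, established by the iterated copying construction plus agreement of the models on bounded subsets of $\theta^{++}$. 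Your approach handles arbitrary $n$ uniformly (rather than deferring to the $n=1$ calculation) and makes the role of model agreement more transparent; the paper's is shorter and more elementary. One quibble: you claim the agreement below $\theta^{++}$ comes from coherence of $E$ and $\GCH$ rather than from $(\theta+2)$-strength, the latter "only serving to place $U$ inside $M_{F_\alpha}$." In this setting these are really the same fact — $V_{\theta+2}\subset M_{F_\alpha}$ is exactly $\mathcal{P}(\theta^+)^{M_{F_\alpha}}=\mathcal{P}(\theta^+)^V$, which is precisely what the paper invokes to put the representing functions $g:\theta\to\theta^{++}$ inside $M_{F_\alpha}$ — so the distinction is more cosmetic than a genuine weakening, but your framing is not incorrect.
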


\begin{proof}
For simplicity of notations, we prove the Lemma for $n = 1$. The proof for arbitrary $n < \omega$ is similar. 
Here, $i_1^{M_{F_{\alpha}}}: M_{F_{\alpha}} \to N_{\alpha,1}$ is the $1-$st ultrapower embedding of $M_{F_{\alpha}}$ by $U$.
Let $j_{\alpha,1}  = i_{1}^{M_{F_{\alpha}}} \circ j_{F_{\alpha}} : V \to M_{\alpha,1}$ denote this composition. 
We need to verify that for every $\gamma < \theta^{++}$ and $X \subset \kappa$, 
$X \in F_{\alpha,1}(\gamma)$ if and only if $\gamma \in j_{\alpha,1}(X)$,
where $F_{\alpha,1}(\gamma)$ is the $\gamma-$th measure in $F_{\alpha,1}$.
As an element in $N_{1} \cong \Ult(V, U)$, $\gamma$ is represented as $\gamma = i_1(g)(\theta)$ where $g: \theta \to \theta^{++}$ is a function in $V$.
Note that $F_{\alpha,1} = i_{1}(F_{\alpha}) \in N_1$ and $X = i_1(X) \in N_1$, so
$X \in F_{\alpha,1}(\gamma)$ if and only if $i_1(X) \in i_1(F_{\alpha})(\gamma) = i_1(F_{\alpha})(i_1(g)(\theta))$.
By L\'{o}s Theorem, the last is equivalent to $\{\nu < \theta \mid X \in F_{\alpha}(g(\nu))\} \in U$, thus
\[ \gamma \in j_{\alpha,1}(X) \Longleftrightarrow \{\nu < \theta \mid X \in F_{\alpha}(g(\nu))\} \in U.\]
Consider again $j_{\alpha,1} = i_1^{M_{F_\alpha}} \circ j_{F_\alpha}$. 
Note that every $f : \theta \to \theta^{++}$ in $V$ can be easily coded as a subset of $\theta \times \theta^+ \times \theta^+$ and therefore belongs to $V_{\theta+2} \subset M_{F_\alpha}$.
It follows that $\gamma = i_1^{M_{F_\alpha}}(g)(\theta)$ is represented by the same function $g \in N_{\alpha,1} \cong \Ult(M_{F_\alpha},U)$, and hence
\[
\gamma \in i_{\alpha,1}(X) \iff i_1^{M_{F_\alpha}}(g)(\theta) \in i_1^{M_{F_\alpha}}(j_\alpha(F))(i_1^{M_{F_\alpha}}(X))
 \iff
\{\nu < \theta \mid g(\nu) \in j_{F_\alpha}(X)\} \in U.\]
The claim follows as $g(\nu) \in j_{F_\alpha}(X)$ if and only if $X \in F_\alpha(g(\nu))$. 
\end{proof}

\noindent The fact $l(\vec{F}) < \theta$ implies that the generators of each $j_{\alpha,n},M_{\alpha,n}$ belong to $[\kappa,\theta^{++})$\footnote{I.e., for every $x \in M_{\alpha,n}$ there is $\gamma \in [\kappa,\theta^{++})$ and $f : \kappa \to V$ so that $x = j_{\alpha,n}(f)(\gamma)$}. We conclude the following

\begin{corollary}
For every $\alpha < l(\vec{F})$ and $n < \omega$ we have
\begin{enumerate}
\item 
The ultrapower of $V$ by $F_{\alpha,n}$ is given by
\[j_{\alpha,n} = i_n^{M_{F_\alpha}} \circ j_{F_\alpha} : V \to M_{\alpha,n} \cong \Ult(V,F_{\alpha,n})\]

\item $F_{\alpha,n}$ is a $\kappa-$complete extender in $V$, i.e., 
$M_{\alpha,n}$ is closed under $\kappa-$sequences form $V$.

\item  $j_{\alpha,n}(\Theta)(\kappa) = \theta_n$ and  $U_{j_{\alpha,n}(\Theta)(\kappa)} = i_n(U)$.
\end{enumerate}
\end{corollary}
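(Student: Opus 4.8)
The plan is to obtain all three clauses from the Lemma together with the remark, just recorded, that the generators of $j_{\alpha,n}$ lie in $[\kappa,\theta^{++})$; equivalently, that every $x\in M_{\alpha,n}$ is of the form $j_{\alpha,n}(f)(\gamma)$ for some $f\in V$ and $\gamma<\theta^{++}$. For clause (1), the Lemma identifies $F_{\alpha,n}$ with the $(\kappa,\theta^{++})$-extender derived from $j_{\alpha,n}=i_n^{M_{F_\alpha}}\circ j_{F_\alpha}\colon V\to M_{\alpha,n}$, so there is a canonical factor embedding $k\colon\Ult(V,F_{\alpha,n})\to M_{\alpha,n}$, $k\big([f,a]_{F_{\alpha,n}}\big)=j_{\alpha,n}(f)(a)$, commuting with the ultrapower maps. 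By the remark $k$ is onto, and an onto elementary map between transitive classes is the identity; hence $\Ult(V,F_{\alpha,n})=M_{\alpha,n}$ and the ultrapower map by $F_{\alpha,n}$ equals $j_{\alpha,n}=i_n^{M_{F_\alpha}}\circ j_{F_\alpha}$.

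For clause (2), by clause (1) it suffices to see that $M_{\alpha,n}=\Ult^{(n)}(M_{F_\alpha},U)$ is closed under $\kappa$-sequences from $V$, and I would argue by induction on $n$. The base case --- $M_{F_\alpha}$ is closed under $\kappa$-sequences --- is standard for a $\kappa$-complete, full, $(\theta+2)$-strong extender on the sequence $E$ with $\theta>\kappa$ (recall $V=L[E]$ satisfies $\GCH$; $\theta$-strength forces cofinally many generators of $F_\alpha$ below $\theta$, so any $\kappa$ of its generators have their sequence in $V_\theta\subseteq M_{F_\alpha}$, and a general $\kappa$-sequence is then handled by the same device together with $\kappa$-completeness of $F_\alpha$). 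For the inductive step write $M_{\alpha,n}=\Ult(M_{\alpha,n-1},W)$, where $W=i_{n-1}^{M_{F_\alpha}}(U)$ is a normal measure on $\theta_{n-1}$ in $M_{\alpha,n-1}$ with ultrapower map $\ell\colon M_{\alpha,n-1}\to M_{\alpha,n}$, so $\cp(\ell)=\theta_{n-1}>\kappa$ and $\ell(\kappa)=\kappa$. Given $s=\la x_\xi\mid\xi<\kappa\ra\in V$ with each $x_\xi\in M_{\alpha,n}$, choose in $V$ functions $h_\xi\in M_{\alpha,n-1}$ with $x_\xi=\ell(h_\xi)(\theta_{n-1})$; by the induction hypothesis $\la h_\xi\mid\xi<\kappa\ra\in M_{\alpha,n-1}$, so the function $H\colon\theta_{n-1}\to M_{\alpha,n-1}$ given by $H(\nu)=\la h_\xi(\nu)\mid\xi<\kappa\ra$ lies in $M_{\alpha,n-1}$; then $\ell(H)(\theta_{n-1})$ is a $\kappa$-sequence whose $\xi$-th entry is $\ell(h_\xi)(\theta_{n-1})=x_\xi$, that is $s=\ell(H)(\theta_{n-1})\in M_{\alpha,n}$. (Each $M_{\alpha,k}$ contains $V_{\theta+2}$ and the relevant image of $U$, so the iteration and the induction are meaningful.)

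For clause (3), by clause (1) $j_{\alpha,n}=i_n^{M_{F_\alpha}}\circ j_{F_\alpha}$. Since $V_{\theta+2}\subseteq M_{F_\alpha}$, $M_{F_\alpha}$ has the same bounded subsets of $\theta$ and the same normal measures on cardinals $\le\theta$ as $V$, so $M_{F_\alpha}\models$ ``$\theta$ is the first measurable above $\kappa$, with unique normal measure $U$''; by elementarity $j_{F_\alpha}(\Theta)(\kappa)=\theta$, and $j_{F_\alpha}$ carries the function $\nu\mapsto U_{\Theta(\nu)}$ to a function with value $U$ at $\kappa$. Applying $i_n^{M_{F_\alpha}}$, whose critical point $\theta$ exceeds $\kappa$, we get $j_{\alpha,n}(\Theta)(\kappa)=i_n^{M_{F_\alpha}}(\theta)$ and $U_{j_{\alpha,n}(\Theta)(\kappa)}=i_n^{M_{F_\alpha}}(U)$. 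Finally $i_n^{M_{F_\alpha}}$ agrees with $i_n$ on $M_{F_\alpha}$ --- the iterated ultrapower of $M_{F_\alpha}$ by $U\in M_{F_\alpha}$ and the iterated ultrapower of $V$ by $U$ coincide on $M_{F_\alpha}$, since $\mathcal P(\theta)^{M_{F_\alpha}}=\mathcal P(\theta)^V$ --- whence $i_n^{M_{F_\alpha}}(\theta)=i_n(\theta)=\theta_n$ and $i_n^{M_{F_\alpha}}(U)=i_n(U)$.

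The one place I expect to need care is clause (2): that an ultrapower by $U$ --- a measure on $\theta>\kappa$ rather than on $\kappa$ --- does not destroy $\kappa$-closure, which is exactly what the inequality $\cp(\ell)>\kappa$ and the representing-function trick $H(\nu)=\la h_\xi(\nu)\mid\xi<\kappa\ra$ exploit; and the base case, where one uses the inner-model-theoretic fact that a full, $(\theta+2)$-strong extender from the sequence $E$ yields an ultrapower closed under $\kappa$-sequences. Clauses (1) and (3) are then essentially bookkeeping with the factor embedding and with the elementarity of $j_{F_\alpha}$ and $i_n^{M_{F_\alpha}}$.
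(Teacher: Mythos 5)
The paper states this Corollary without proof, as an immediate consequence of the preceding Lemma and the footnoted observation that the generators of $j_{\alpha,n}$ lie in $[\kappa,\theta^{++})$; your proposal fills in exactly that gap, and the organization (factor embedding for (1), induction on $n$ with the $H(\nu)=\langle h_\xi(\nu)\mid\xi<\kappa\rangle$ trick for (2), elementarity and agreement of $i_n^{M_{F_\alpha}}$ with $i_n$ for (3)) is the natural argument. Clauses (1) and (3) and the inductive step of (2) are right. The one soft spot is the base case of (2): the claim that ``any $\kappa$ of its generators have their sequence in $V_\theta\subseteq M_{F_\alpha}$'' is off, since the generators of $F_\alpha$ range through $[\kappa,\theta^{++})$, not $[\kappa,\theta)$, and a $\kappa$-sequence of ordinals below $\theta^{++}$ is not an element of $V_\theta$ (nor even of $V_{\theta+2}$, since such a sequence has rank roughly its supremum). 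What one actually needs is the full strength $V_{\theta+2}\subseteq M_{F_\alpha}$ together with $\GCH$: given $\langle a_\xi\mid\xi<\kappa\rangle$ with each $a_\xi\in[\theta^{++}]^{<\omega}$, set $\tau=\sup\bigcup_\xi a_\xi<\theta^{++}$, choose a $V$-bijection $g:\theta^+\to\tau$, and code both $g$ (as a well-ordering of $\theta^+$) and the pulled-back sequence $\langle g^{-1}[a_\xi]\mid\xi<\kappa\rangle$ by subsets of $\power(\theta)$ via $\GCH$; these codes live in $V_{\theta+2}\subseteq M_{F_\alpha}$, and $M_{F_\alpha}$ decodes them (using its own $\GCH$ and agreement on $\power(\theta)$) to recover the original sequence, from which closure under $\kappa$-sequences of $M_{F_\alpha}$ follows as you outline. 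You flagged the base case as the delicate point yourself; the issue is just that $V_\theta$ is not enough and one genuinely uses $V_{\theta+2}$ plus a coding argument rather than a direct inclusion.
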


\subsection{Collapsing and Coding}\label{SubSection - Collapsing and Coding}

Let $\nu \leq \kappa$ and suppose that $V^*$ is a set generic extension of $V = \K(V^*)$ which preserves all stationary subsets of $\nu^+$. 
For every $A \subset \nu^+$ in $V^*$, we define a coding poset $\Code(\nu^+,A)$ to be forced over $V^*$.
 $V$. 

\begin{definition}[$\Code(\nu^+,A)$]\label{Definition - Collapse and Code}
Let $\vec{S} = \la S_i \mid i < \nu^+ \ra$ be the $<_{\K(V^*)}$ minimal $\Diamond_{\nu^+}$ 
sequence in $V = \K(V^*)$. This sequence is definable from $H(\nu^+)^{V}$.
Let $\la T_i \mid i < \nu^+\ra$ be a sequence of 
pairwise disjoint stationary subsets of  $\Cf(\nu) \cap \nu^+$, defined by $T_i = \{ \mu \in \Cf(\nu) \cap \nu^+ \mid S_\mu = \{i\}\}$. \\
Conditions $c \in \Code(\nu^+,A)$ are closed bounded subsets $c \subset \nu^+$ 
which satisfy the following properties for every $i < \nu$:
\begin{itemize}
 \item $c \cap T_{4i} = \emptyset$ if $i \in A$, and
\item $c \cap T_{4i + 1} = \emptyset$ if $i \not\in A$.
\end{itemize}
For every $c,d \in \Code(\nu^+,A)$, $d \geq c$ if 
\begin{itemize}
\item $d$ is an end extension of $c$,
 \item $(d \setminus c) \cap T_{4 i + 2} = \emptyset$ if $i \in c$, and 
 \item $(d \setminus c) \cap T_{4 i + 3} = \emptyset$ if $i < \max(c)$, $i \not\in c$.
 \end{itemize}
\end{definition}

\noindent The forcing $\Code(\nu^+,A)$ is $<\nu^+-$distributive.
Let $C \subset \nu^+$ be a $\Code(\nu^+,A)-$ generic club over $V^*$.
It is clear from the definition of $\Code(\nu^+,A)$ that for every $i < \nu^+$, $C$ is almost disjoint from $T_{4i}$ if $i \in A$; from 
$T_{4i+1}$ if $i \not\in A$; from 
$T_{4i+2}$ if $i \in C$; from 
$T_{4i+3}$ if $i \not\in C$.
The sets $T_j$, $j < \nu^+$, which are not required to be almost disjoint from $C$, remain stationary (see \cite{Friedman-Magidor - Normal Measures}).
Therefore, for every $i < \nu^+$, exactly one of  $T_{4i}$,$T_{4i+1}$ is not stationary in $V^*$, and exactly one of  $T_{4i+2}$,$T_{4i+3}$ is not stationary in $V^*$.

Let $g : \nu^+ \to \tau$ be a surjection from $\nu^+$ onto some ordinal $\tau$, and 
let $A_g \subset \nu^+$ be a canonical encoding of $g$
defined by $\la \mu_0, \mu_1 \ra \in A_g$ if and only if  $g(\nu_0) \leq g(\nu_1)$. Here $\la \cdot , \cdot \ra$ is the Godel pairing function $\nu^+ \times \nu^+$. 
Note that $g$ can be easily reconstructed from $A_g$. We define $\Code(\nu^+,g)$ to be $\Code(\nu^+,A_g)$.

\begin{definition}[Collapsing and Coding Iteration $\po$]\label{Definition - Code,Collapse, and iteration}
Define an iteration  $\po = \po_{\kappa+1} = \la \po_\nu, \qo_\nu \mid \nu \leq \kappa\ra$ over $V = L[E]$.
We use the Friedman-Magidor non-stationary support, i.e., every $p \in \po_\nu$ belongs to the inverse limit of the posets $\la \po_\mu \mid \mu < \nu\ra$
with the restriction that if $\nu$ is inaccessible, then the set of $\mu<\nu$ such that $p_\mu$
is nontrivial, is a nonstationary subset of $\nu$.
For every $\nu \leq \kappa$ if $\nu$ is not an inaccessible limit of measurable cardinals 
then $\force_{\po_\nu} \qo_\nu = \emptyset$.
Otherwise, $\force_{\po_\nu} \qo^1_\nu = \Coll(\nu^+,\Theta(\nu)^{++})*\Code(\nu^+,\name{g_\nu})$ where
\begin{enumerate}
\item $\Coll(\nu^+,\Theta(\nu)^{++})$ is the Levy collapsing forcing, introducing a surjection
$g_\nu : \Theta(\nu)^{++} \onto \nu^+$, and 
\item $\Code(\nu^+,\name{g_\nu})$ is the coding 
of the generic collapsing function.
\end{enumerate}
\end{definition}

\begin{remark}\label{Remark - I - NonStationary Support}
\begin{enumerate}
$\qo_\nu$ is $<\nu-$closed  and $<\nu^+-$distributively closed for every $\nu < \kappa$.
The uniqueness of the $\Code(\nu^+)$ generic at every non-trivial stage $\nu \leq \kappa$ implies that for every $V-$generic filter $G \subset \po$, $G$ is the unique $\po$ generic over $V$ in $V[G]$.

\item 
The iteration $\po$ is a variant of the Friedman-Magidor iteration (\cite{Friedman-Magidor - Normal Measures}
) where the collapsing posets $\Coll(\nu^+,\Theta(\nu)^{++})$ are replaced with generalized Sacks forcings.
\end{enumerate}
\end{remark}

In order to force with $\Code(\nu^+,\name{g_\nu})$ over a $\po\uhr\nu * \Coll(\nu^+,\Theta(\nu)^{++})-$generic extension of $V$, 
it is necessary that the extensions preserves 
the stationary subsets of $\nu^+$ in $V$, which are used in $\Code(\nu^+,\name{g_\nu})$. This is indeed the case. The proof is similar to the one given by Friedman-Magidor. 

The next results follow from the arguments by Friedman and Magidor\footnote{I.e., 
Lemma 5 and Lemma 14 in \cite{Friedman-Magidor - Normal Measures}}:
\begin{lemma}\label{Lemma - II - basic results on po}
\begin{enumerate}
 \item For every inaccessible limit of measurable cardinals $\nu \leq \kappa$, the poset $\po\uhr \nu * \Coll(\nu^+,\Theta(\nu)^{++})$
 preserve stationary subset of $\nu^+$.
 \item The iteration $\po$ is $\sigma-$closed and does not collapse inaccessible limits of measurable cardinals in $V$. 
 \item For every function $\phi : \kappa \to \kappa$ in a generic extension of $V$ by $\po$, 
 there exists some $f : \kappa \to [\kappa]^{<\kappa}$ in $V$, so that
 $|f(\nu)| \leq \Theta^{++}(\nu)$, and $\phi(\nu) \in f(\nu)$ for every $\nu < \kappa$. 
\end{enumerate}
\end{lemma}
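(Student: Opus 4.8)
The plan is to derive all three items from the corresponding arguments of Friedman and Magidor in \cite{Friedman-Magidor - Normal Measures} --- their Lemma~5 for (1), and the capturing argument behind their Lemma~14 for (3) --- so that the only real work is to check that $\po$ shares the structural features those arguments rest on. The features I would use are: $\po$ has non-stationary support; $V=L[E]$ satisfies $\GCH$; every non-trivial iterand $\qo_\mu$ (for $\mu\leq\kappa$) is $<\mu$-closed, indeed $<\mu^+$-distributive, at a cardinal $\mu$ which is an inaccessible limit of measurable cardinals, hence $\mu>\omega_1$ and $\mu$ is Mahlo; and, since $\Theta(\nu)$ is the \emph{first} measurable cardinal above $\nu$, there is no inaccessible limit of measurable cardinals in $(\nu,\Theta(\nu)^{++}]$, so $\po$ has no non-trivial stage there. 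Writing $\rho_\nu$ for the least non-trivial stage of $\po$ strictly above $\nu$ (and $\rho_\nu=\kappa^+$ if there is none $\leq\kappa$), this gives $\rho_\nu>\Theta(\nu)^{++}$ and the tail $\po_{(\nu,\kappa]}$ is forced to be $<\rho_\nu$-closed; and for any inaccessible $\nu$ the initial segment $\po\uhr\nu$ is $\nu$-c.c.\ by the usual $\Delta$-system argument for non-stationary support iterations of Mahlo length with iterands of size $<\nu$ (using $\GCH$), so in particular $|\po\uhr\nu|\leq\nu^+$ and $|\po\uhr(\nu+1)|\leq\Theta(\nu)^{++}$.

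For (2): $\sigma$-closure is obtained by a coordinatewise limit --- given a descending $\omega$-sequence in $\po$, the union of the supports meets each inaccessible $\mu\leq\kappa$ in a countable union of sets non-stationary in $\mu$, hence in a set non-stationary in $\mu$ (as $\mathrm{cf}(\mu)>\omega$), and each iterand is $\sigma$-closed, so a lower bound is assembled by recursion on the coordinates, and its support is still non-stationary below every inaccessible. For the non-collapsing clause, let $\nu\leq\kappa$ be an inaccessible limit of measurable cardinals and factor $\po\cong\po\uhr\nu*\qo_\nu*\po_{(\nu,\kappa]}$; then $\po\uhr\nu$ is $\nu$-c.c., hence does not collapse $\nu$, while over the corresponding extension $\qo_\nu*\po_{(\nu,\kappa]}$ is $<\nu^+$-distributive (a composition of the $<\nu^+$-distributive $\qo_\nu$ with the $<\rho_\nu$-closed, hence $<\nu^+$-distributive, $\po_{(\nu,\kappa]}$), hence does not collapse $\nu$ or $\nu^+$.

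For (1): once one has that $\po\uhr\nu$ is $\nu$-c.c.\ it preserves stationary subsets of $\nu^+$, and $\Coll(\nu^+,\Theta(\nu)^{++})$ is $<\nu^+$-closed so it too preserves stationary subsets of $\nu^+$, whence the two-step iteration does. I expect this item to be the main obstacle, in the sense that if one does not wish to rely on the $\nu$-c.c.\ (which here is available only because the non-trivial stages $\nu$ are Mahlo), one must reproduce the generic-condition argument of \cite[Lemma~5]{Friedman-Magidor - Normal Measures}: with $\mathbb{R}=\po\uhr\nu*\Coll(\nu^+,\Theta(\nu)^{++})$, a stationary $S\subseteq\nu^+$ in $V$, an $\mathbb{R}$-name $\dot C$ for a club, and a condition $p$, build a continuous $\in$-chain $\la N_i\mid i<\nu^+\ra$ of size-$\nu$ elementary submodels of a large $H(\chi)$ with $\nu+1\subseteq N_0$ and $\la N_j\mid j\leq i\ra\in N_{i+1}$; for $\delta\in S$ with $N_\delta\cap\nu^+=\delta$, build inside $N_\delta$ a decreasing sequence of conditions of length $\mathrm{cf}(\delta)$ below $p$, meeting every dense subset of $\mathbb{R}$ in $N_\delta$ and forcing cofinally many points of $\dot C$ below $\delta$, and take a lower bound $q$; then $q$ forces $\delta\in\dot C\cap S$. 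The delicate point is precisely the existence of this lower bound: the $\Coll$-coordinate is handled by its $<\nu^+$-closure, while for the $\po\uhr\nu$-part one uses the non-stationary support, as in \cite{Friedman-Magidor - Normal Measures}, to arrange that below each inaccessible $\mu\leq\mathrm{cf}(\delta)$ the sequence stabilizes, so that the $<\mu$-closure of the remaining iterands yields a coordinatewise lower bound with non-stationary support.

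For (3): the plan is to argue as in \cite[Lemma~14]{Friedman-Magidor - Normal Measures}. Fix a $\po$-name $\dot\phi$ for $\phi$ and a generic $G$, and factor $\po\cong\po\uhr(\nu+1)*\po_{(\nu,\kappa]}$; since the tail is $<\rho_\nu$-closed with $\rho_\nu>\Theta(\nu)^{++}>\nu+1$, it adds no new $(\nu+1)$-sequences of ordinals, so $\phi\uhr(\nu+1)\in V[G\uhr(\nu+1)]$, i.e.\ $\phi(\nu)$ is reconstructed by the initial part $\po\uhr(\nu+1)$ of the iteration. As $|\po\uhr(\nu+1)|\leq\Theta(\nu)^{++}$ (by $\GCH$ in $V$ together with $|\qo_\mu|<\nu$ for $\mu<\nu$ and $|\qo_\nu|=\Theta(\nu)^{++}$), the possibilities for $\phi(\nu)$, computed in $V$ from $\dot\phi$ and $\nu$, can be bounded by a set $f(\nu)\in V$ of size $\leq\Theta(\nu)^{++}$; collecting these $f(\nu)$ gives the required $f$. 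The one point needing care here is the precise cardinality bookkeeping pinning $\phi(\nu)$ to $\Theta(\nu)^{++}$ possibilities, which is exactly that of \cite{Friedman-Magidor - Normal Measures}.
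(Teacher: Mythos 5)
The paper itself gives no proof of this lemma beyond the citation to Friedman--Magidor (Lemmas~5 and~14 of \cite{Friedman-Magidor - Normal Measures}), so your task is really to check that their arguments transfer, and in broad outline you do exactly that: the factorization at the first nontrivial stage $\rho_\nu>\Theta(\nu)^{++}$ above $\nu$, the closure of the tail, the size bound $|\po\uhr(\nu+1)|\leq\Theta(\nu)^{++}$ under $\GCH$, the coordinatewise limit for $\sigma$-closure, and the elementary-submodel generic-condition argument for stationarity preservation are all the right ingredients and match what the cited lemmas supply.

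There is, however, one step you should not lean on. You assert that $\po\uhr\nu$ is $\nu$-c.c.\ ``by the usual $\Delta$-system argument for non-stationary support iterations of Mahlo length.'' This is doubly shaky: first, an inaccessible limit of measurable cardinals need not be Mahlo (the least such $\nu$ is a counterexample), so the premise is not available; second, conditions with non-stationary support may have support of full size $\nu$, so the $\Delta$-system lemma does not apply and the iteration is in general \emph{not} $\nu$-c.c.\ --- this is precisely why Friedman--Magidor argue via Lemma~5 rather than a chain-condition. In part (1) you already hedge and supply the Lemma~5 fallback, which is the correct route and should simply be the primary argument. In part (2) you rely on the $\nu$-c.c.\ without a fallback; replace it with the factorization argument you have already set up: for each $\mu<\nu$, $\po\uhr\nu \cong \po\uhr(\mu'+1)*\po_{(\mu'+1,\nu)}$ with $\mu'$ the least nontrivial stage $>\mu$, the tail is $<\mu'$-closed, and $|\po\uhr(\mu'+1)|<\nu$, so a counting (Levy--Solovay) argument shows no cofinal $f\colon\mu\to\nu$ is added and $\nu$ stays inaccessible; the $<\nu^+$-distributivity of $\qo_\nu*\po_{(\nu,\kappa]}$ then handles $\nu^+$. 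Part (3) is fine once the size bound $|\po\uhr(\nu+1)|\leq\Theta(\nu)^{++}$ is justified by counting nice names (which you gesture at), and one should pass through a maximal antichain deciding $\dot\phi(\check\nu)$ rather than literally all names, but the bookkeeping comes out to $\Theta(\nu)^{++}$ as claimed.
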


We proceed to study the normal measures on $\kappa$ in a $\po$ generic extension. Let $G \subset \po$ be generic over $V$.
\begin{lemma}\label{Lemma - II - po genericity extension}
For every $\alpha < l(\vec{F})$ and $n < \omega$, there is a unique $M_{\alpha,n}-$generic filter
$G_{\alpha,n} \subset j_{\alpha,n}(\po)$ so that $j_{\alpha,n}``G \subset G_{\alpha,n}$.
\end{lemma}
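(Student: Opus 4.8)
The plan is to build $G_{\alpha,n}$ stage by stage, following the standard lifting-of-generics argument adapted to the Friedman–Magidor non-stationary support iteration. Write $j = j_{\alpha,n}$ and $M = M_{\alpha,n}$, and recall from the Corollary that $j = i_n^{M_{F_\alpha}}\circ j_{F_\alpha}$ with $M$ closed under $\kappa$-sequences from $V$, and that $j(\Theta)(\kappa) = \theta_n$. Since $\cp(j) = \kappa$ and all the coding/collapsing activity of $\po$ below $\kappa$ is below $\kappa$, we have $j(\po)\uhr\kappa = \po\uhr\kappa$, so $G\uhr\kappa$ is already $M$-generic for the part of $j(\po)$ below $\kappa$. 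The iteration $j(\po)$ has a nontrivial stage at $\kappa$ (because $\kappa$ is, in $M$, an inaccessible limit of measurable cardinals — its measurability is witnessed by $j_{\alpha,n}$ restricted appropriately, or more simply because $V_{\theta+2}\subset M$ gives $\kappa$ enough measurables below $\theta_n$), and that stage is $\Coll(\kappa^+, \theta_n^{++})^M * \Code(\kappa^+, \name{g_\kappa})$. So the task reduces to: (i) find a $V[G]$-generic for $\Coll(\kappa^+,\theta_n^{++})^M$ that is compatible with $j$; (ii) extend through the coding poset; (iii) handle the tail of $j(\po)$ above $\kappa$ in $M$; and (iv) verify $j``G\subset G_{\alpha,n}$ and uniqueness.

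For step (i): in $M$ the poset $\Coll(\kappa^+,\theta_n^{++})^M$ is $<\kappa^+$-closed, and $M$ is closed under $\kappa$-sequences from $V$, hence also (via $G\uhr\kappa$, which is $\sigma$-closed-generic and does not collapse $\kappa$) the relevant closure survives into $V[G\uhr\kappa]$. The point is that $|\theta_n^{++,M}|^{V[G]} = \kappa^+$: indeed $\theta_n = j(\Theta)(\kappa) < j(\kappa)$, and the generators of $j$ lie in $[\kappa,\theta^{++,V})$, so $\theta_n$ is a $j$-image-of-a-function-at-$\kappa$ value; counting in $V$ using Lemma~\ref{Lemma - II - basic results on po}(3) and the fact that $\po$ collapses $\Theta(\nu)^{++}$ to $\nu^+$ cofinally below $\kappa$, one sees $\theta_n^{++,M}$ has size $\kappa^+$ in $V[G]$ and, crucially, is already collapsed to $\kappa^+$ by $G$ itself — or, if not literally collapsed, we can enumerate a dense-below-the-generic set of conditions in order type $\kappa^+$ and meet all $\kappa^+$-many dense sets of $M$ one at a time using the closure, since $M\models 2^{\theta_n} = \theta_n^+$ from $\GCH$ in $L[E]$ gives only $\kappa^+$-many dense sets to meet. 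This diagonal construction, carried out in $V[G]$ (which has the needed closure because each initial segment lives in a $\sigma$-closed extension and the whole of $\qo_\kappa$ is handled after), produces $G^{\Coll}_{\alpha,n}$; since $\cp(j)=\kappa > $ everything in the domain of the collapse's conditions' supports below $\kappa$, the requirement $j``(\text{empty below }\kappa) \subset G^{\Coll}_{\alpha,n}$ is vacuous, so any generic works and we simply fix one.

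For step (ii), the coding poset $\Code(\kappa^+, \name{g_\kappa})^M$ is $<\kappa^+$-distributive in $M[G^{\Coll}]$, and again we meet its $\kappa^+$-many dense sets by a diagonal sequence of length $\kappa^+$, taking unions at limits (legitimate since conditions are closed bounded subsets of $\kappa^+$ and the distributive closure guarantees the limits stay conditions); this is exactly the Friedman–Magidor construction of the generic club, and it is performed inside $V[G]$. For step (iii), the tail $j(\po)$ above $\kappa$ computed in $M$ is, below $j(\kappa)$, an iteration whose nontrivial stages lie at inaccessible limits of measurables of $M$ above $\kappa$; but $M = \Ult(V,F_{\alpha,n})$ has no such cardinals in the interval $(\kappa, \theta_n)$ other than those inherited, and above $\theta_n$ one builds the tail generic by the same diagonalization using that $|j(\kappa)| $ is small enough and $M^\kappa\cap V[G]\subset M$ — here one uses that $j(\po)/G_{\alpha,n}\uhr(\kappa+1)$ is sufficiently closed in $M$ and that $V[G]$ sees only $\kappa^+$ (or at most $2^\kappa$) dense sets, together with a master-condition-free argument because $j``G\uhr\kappa$ is already generic and $G$ has no nontrivial coordinates strictly between the stages we've handled. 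Finally, $j``G\subset G_{\alpha,n}$ holds coordinatewise: below $\kappa$ because $j$ is the identity there and $G\uhr\kappa = G_{\alpha,n}\uhr\kappa$; at $\kappa$ and above because each $p\in G$ has support below $\kappa$ when restricted to coordinates $\le\kappa$ is trivial at $\kappa$ itself in the sense that $j(p)$'s coordinate at $\kappa$ is the condition $j$ applies to a bounded set, which lies in our diagonally-constructed generic by density. Uniqueness follows from Remark~\ref{Remark - I - NonStationary Support}(1): the $\Code$ generics are unique in the relevant extensions, so $G_{\alpha,n}$ is the unique $M$-generic in $V[G]$ extending $j``G$ — equivalently, $G_{\alpha,n}$ is definable in $V[G]$ from $j``G$ and the $\Diamond$-sequences.

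The main obstacle is step (iii), the tail above $\kappa$: one must check that $j(\po)$ restricted to the interval $(\kappa, j(\kappa))$, as computed in $M$, is tame enough — i.e., has highly-closed enough quotient and few enough dense sets as seen from $V[G]$ — for the diagonal construction to go through, and that the non-stationary support condition is respected by the union-at-limits. This is where the hypothesis $l(\vec F) < \theta$ and the precise structure of $M_{\alpha,n} = \Ult^n(M_{F_\alpha}, U)$ (in particular that its measurable cardinals above $\kappa$ are well-understood iterated images of $\theta$) do the real work; the arguments of Friedman–Magidor (Lemmas 5 and 14 of \cite{Friedman-Magidor - Normal Measures}) apply almost verbatim once this structural picture is in place.
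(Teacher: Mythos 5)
Your proposal takes a genuinely different route from the paper's, and it has a gap that cannot be repaired as stated.

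The crux of the lemma is the clause $j_{\alpha,n}``G \subset G_{\alpha,n}$, and this constraint bites throughout the tail of $j_{\alpha,n}(\po)$, not just below $\kappa$. Your construction builds the part of $G_{\alpha,n}$ above $\kappa$ by pure diagonalization against dense sets, treating the requirement that it contain $j_{\alpha,n}``G$ as something to be ``verified coordinatewise'' afterwards. But this cannot be verified after the fact: for a condition $p \in G$ whose support below $\kappa$ is unbounded (merely non-stationary), $\supp(j_{\alpha,n}(p)) = j_{\alpha,n}(\supp(p))$ contains ordinals strictly between $\kappa$ and $j_{\alpha,n}(\kappa)$, and also $j_{\alpha,n}(\kappa)$ itself, whose coordinate is $j_{\alpha,n}(p_\kappa)$. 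A generic chosen by an independent diagonalization has no reason to contain those coordinates. You appear to believe the constraint above $\kappa$ is vacuous because $j_{\alpha,n}(p)$ has the trivial condition at coordinate $\kappa$ --- that much is true (a club witnessing non-stationarity of $\supp(p\uhr\kappa)$ pushes $\kappa$ out of $j_{\alpha,n}(\supp(p\uhr\kappa))$) --- but $\kappa$ is not the coordinate that matters. The image of $p_\kappa$ sits at coordinate $j_{\alpha,n}(\kappa)$, and the nontrivial middle coordinates in $(\kappa, j_{\alpha,n}(\kappa))$ are exactly where the content of the lemma lies. You flag the tail as ``the main obstacle'' and defer to Friedman--Magidor, but the construction you outline is not theirs.

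What the paper actually does is the Friedman--Magidor fusion argument, which sidesteps the entire issue by showing that $j_{\alpha,n}``G$ \emph{already generates} a generic, rather than building one externally and hoping it is compatible. Given a dense open set $D$ in the tail, one writes $D = j_{\alpha,n}(f)(\gamma)$ with $\gamma < \theta^{++}$ and $f(\mu)$ a name for a dense set of the tail past stage $\kappa(\mu)$; one then performs a fusion in $V$ along $\po_\kappa$, producing a condition $q = p_\kappa$ with a non-stationary support (controlled by the clubs $C_i$ and the continuous sequence $\vec{\nu}$) so that $q\uhr(\nu_i+1)$ forces $q\setminus(\nu_i+1)$ into $\bigcap\{f(\mu) : \mu \leq \Theta(\nu_i)^{++}\}$ for every $i$. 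Applying $j_{\alpha,n}$ and using $\nu_\kappa = \kappa$ gives $j_{\alpha,n}(q)\uhr(\kappa+1) \force j_{\alpha,n}(q)\setminus(\kappa+1) \in \bigcap\{j_{\alpha,n}(f)(\mu) : \mu \leq \theta_n^{++}\} \ni D$. Density of such $q$ then yields a $q \in G$ with $j_{\alpha,n}(q)\setminus(\kappa+1) \in D$. This is the missing idea: the generic is found inside $j_{\alpha,n}``G$, not built alongside it.
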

\begin{proof}
We have that
$j_{\alpha,n}(\po)\uhr(\kappa+1) = \po$. 
The coding generics and the fact $H(\kappa^+)^{M_{\alpha,n}} = H(\kappa^+)^{V}$ imply
that $G \in V[G]$ is the only possible $M_{\alpha,n}-$generic filter for $\po$.
We prove that $\{j_{\alpha,n}(q')\setminus (\kappa+1) \mid q' \in G\}$ generates
a  $M_{\alpha,n}[G]-$generic filter for $j_{\alpha,n}(\po)\setminus (\kappa+1)$. 
The proof consists of two parts: In the first part (\textbf{1}) we restrict ourselves to conditions  $q \in G\uhr\kappa \subset \po_\kappa$ and $j_{\alpha,n}(q) \in j_{\alpha,n}(\po_\kappa) = j_{\alpha,n}(\po)\uhr j_{\alpha,n}(\kappa)$ (i.e., we do not address the last forcing step $j_{\alpha,n}(\qo_\kappa)$). We show that for every $\po-$name of a dense open set $D \subset j_{\alpha,n}(\po_\kappa)\setminus (\kappa+1)$ there is $q \in G\uhr\kappa$ such that $j_{\alpha,n}(q)\uhr (\kappa+1) \force j_{\alpha,n}(q)\setminus (\kappa+1) \in D$. Then, in the second part (\textbf{2}),  we build on the results of the first part and deal with dense open sets $E \subset j_{\alpha,n}(\po)\setminus (\kappa+1)$. \\

\textbf{1. }
Let $D$ be a $\po-$name for a dense open set of
$j_{\alpha,n}(\po_\kappa)\setminus (\kappa+1)$.
There are $\gamma < \theta^{++}$ and $f : \kappa \to V$ so that
$D = j_{\alpha,n}(f)(\gamma)$. We may assume that for every $\mu < \kappa$, if $\kappa(\mu) \leq \mu$ is the largest cardinal up to $\mu$ which is a limit of measurable cardinals, then
$f(\mu)$ is a $\po\uhr (\kappa(\mu)+1)$ name for a dense open set of $\po_\kappa\setminus(\kappa(\mu)+1)$\\

Let $p$ be a condition in $\po_\kappa$. Working in $V$, we simultaneously define three sequences:
\begin{enumerate}
\item $\la p_i \mid i \leq \kappa\ra$, an increasing sequence of conditions above $p$. 
\item  $\la C_i \mid i \leq \kappa\ra$, a sequence of closed unbounded subset of $\kappa$. 
\item  $\vec{\nu} = \la \nu_i \mid i < \kappa\ra$, a continuous increasing sequence of ordinals 
 below $\kappa$. 
 \end{enumerate}
 
We take $p_0$ to be an extension of $p$ which belongs to $f(0)$, $C_0 \subset \kappa$ to be a closed unbounded set, disjoint
from $\supp(p_0)$, and $\nu_0 = \min(\{ \nu \in C_0 \mid \nu \text{ is inaccessible and limit of measurable cardinals}\})$. 
The forcing $\po_\kappa \setminus (\nu_0 + 1)$ 
is $\Theta(\nu_0)^{+3}-$closed and therefore the intersection 
$\bigcap \{f(\mu)\mid \mu \leq \Theta(\nu_0)^{++}\}$ is dense in $\po_\kappa \setminus (\nu_0 + 1)$.
Let $p_1$ be an extension of $p_0$ so that
\begin{enumerate}
 \item $p_1 \uhr \nu_0 + 1 = p_0 \uhr \nu_0 + 1$, and
 \item $p_1\uhr \nu_0 + 1 \force p_1\setminus \nu_0+1 \in 
 \bigcap\{f(\mu) \mid \mu \leq \Theta(\nu_0)^{++}\}$.
\end{enumerate}
We take $C_1\subset C_0$ to be a closed unbounded set so that $C_1 \cap \supp(p_1) = \emptyset$ and  $C_1 \cap (\nu_0+1) = C_0 \cap (\nu_0+1)$. We then choose 
$\nu_1 = \min( \{ \nu \in C_1 \setminus (\nu_0 + 1)\mid  \nu \text{ is inaccessible and limit of measurable cardinals}\})$.\\
\noindent Suppose that the three sequences have been constructed up to some $i^* \leq \kappa$, and satisfy the following properties:
\begin{enumerate}
 \item $\la p_j \mid j < i^*\ra$ is an increasing sequence of conditions and $\la \nu_j \mid j < i^*\ra$ is a continuous increasing sequence of ordinals below $\kappa$, 
 \item $p_{j_1} \uhr \nu_{j_1} + 1 = p_{j_2} \uhr \nu_{j_1} + 1$ and $C_{j_2} \subset C_{j_1}$ for every $j_1 < j_2 < i^*$,
 \item $p_{j+1} \uhr \nu_{j} + 1 \force p_{j+1}\setminus \nu_j+1 \in 
 \bigcap\{f(\mu) \mid \mu \leq \Theta(\nu_j)^{++}\}$ whenever $j+1 < i^*$,
 \item $C_{j} \cap \supp(p_{j}) = \emptyset$ for every $j < i^*$,
 \item $\{\nu_j \mid j < i^*\}  \subset \bigcap_{i < i^*}C_{i}$ if $i^* < \kappa$.
\end{enumerate}
If $i^* = i+1$ is a successor ordinal then we define 
$p_{i+1},C_{i+1},\nu_{i+1}$ from $p_i,C_i,\nu_i$ the same way 
$p_{1},C_{1},\nu_{1}$ were defined from $p_0,C_0,\nu_0$.\\
Suppose $i^* = \delta$ is a limit ordinal. 
Let $\nu_\delta = \cup_{i<\delta}\nu_i$ and define $C_\delta$ to be $\bigcap_{i<\delta}C_i$ if $\delta < \kappa$, and 
$\triangle^{\vec{\nu}}C_i = \{ \alpha < \kappa \mid \forall i < \kappa. \thinspace (\nu_i < \alpha) \rightarrow (\alpha \in C_i)\}$ if $\delta = \kappa$.  
For every $j \leq \delta$ we have that $\{\nu_i \mid i < \delta\} \subset C_j$ and therefore $\nu_\delta \in C_j$. 

Let us define $p_\delta$. 
We first construct $p_\delta\uhr\nu_\delta$: 
For every $i < \nu_\delta$ let $(p_\delta)_i = (p_{j})_{i}$ where $j < \delta$ is such that
$i < \nu_j$. It follows that $p_\delta\uhr \nu_j + 1 = p_j\uhr \nu_j +1$ for every 
$j < \delta$.
Let us verify that $p_\delta\uhr \nu_\delta$ has
a nonstationary support.
For every inaccessible cardinal $\gamma < \nu_\delta$ we have that $p_\delta\uhr \gamma = p_\gamma\uhr \gamma$ hence $\supp(p_\delta\uhr\nu_\delta) \cap \gamma$ is nonstationary in $\gamma$. 
If $\nu_\delta$ is inaccessible (i.e., $\nu_\delta = \delta$) then $\supp(p_\delta\uhr \nu_\delta) \subset \nu_\delta$ is nonstationary in $\nu_\delta$ as it is disjoint $\{\nu_i \mid i < \delta\}$.\\
If $\delta = \kappa$ then $\nu_\delta = \kappa$, so $p_\delta = p_\delta\uhr\nu_\delta$ and we are done. 
Suppose that $\delta < \kappa$. Set $(p_\delta)_{\nu_\delta} = 0_{\qo_{\nu_\delta}}$ and let $p_\delta \setminus (\nu_\delta+1)$ be
a common extension of $\{ p_i\setminus \nu_\delta +1 \mid i < \delta\}$
so that $\supp(p_\delta \setminus \nu_\delta+1) = \bigcup_{i < \delta}\supp(p_i \setminus \nu_\delta+1)$. This is clearly possible as $\po_{\delta}\setminus (\nu_\delta+1)$
is $\delta^+-$closed, and it is not difficult to see that $p_\delta$, $C_\delta$, and $\nu_\delta$ satisfy the inductive assumptions.\\
This concludes the construction of the sequence $\la p_i \mid i \leq \kappa\ra$.
Let $q = p_\kappa$. For every $i < \kappa$ we have that $q \uhr \nu_i+1 \force q \setminus (\nu_i + 1) \in \bigcap\{f(\mu) \mid \mu \leq \Theta(\nu_i)^{++}\}$. 
Let $j_{n,\alpha}(\{\nu_i \mid i < \kappa\}) = \{\nu_i \mid i < j_{n,\alpha}(\kappa)\}$. Since $\{\nu_i \mid i < \kappa\}$ is a closed unbounded set in $\kappa$ we conclude that $\nu_\kappa = \kappa$ and that
\[j_{\alpha,n}(q)\uhr (\kappa+1) \force j_{\alpha,n}(q)\setminus(\kappa+1) \in 
\bigcap\{j_{\alpha,n}(f)(\mu) \mid  \mu \leq \theta_n^{++}\}.\]
The last intersection includes $D = j_{\alpha,n}(f)(\gamma)$.\\

\textbf{2. }
Suppose now that $E$ is dense open set of $j_{\alpha,n}(\po)\setminus (\kappa+1)$.
There are $e : \kappa \to V$ and $\gamma < \theta^{++}$ such that $E = j_{\alpha,n}(e)(\gamma)$. 
We may assume that for every $\mu < \kappa$, $e(\mu)$ is a $\po_{\kappa(\mu)+1}$ name
of a dense open set of $\po\setminus (\kappa(\mu)+1)$. 
Let $E(\name{G(j_{\alpha,n}(\po_\kappa)})$ be a $j_{\alpha,n}(\po_\kappa)$ for the set of all conditions $e \in j_{\alpha,n}(\qo_\kappa)$ for which there is some $t$ in the generic filter $G(j_{\alpha,n}(\po_\kappa))$ such that $t \fr \name{e} \in E$. 
$E(\name{G(j_{\alpha,n}(\po_\kappa)})$ is dense open in $j_{\alpha,n}(\qo_\kappa)$.
Similarly, for every $\mu < \kappa$ $e(\mu)(G(\po_{\kappa}))$ is a name for a dense open set
of $\qo_\kappa$.  
Fix a condition $p' = \la p'_\alpha \mid \alpha \leq \kappa\ra \in \po$. The fact $\qo_\kappa$ is
$<\kappa^+-$distributive implies there is a $\po_\kappa$ name $q'_\kappa$ so that
$p' \uhr \kappa \force p'_\kappa \leq q'_\kappa \in \bigcap_{\mu<\kappa} e(\mu)(G(\po_\kappa))$.
Let $p = p'\uhr \kappa$. We have that  $j_{\alpha,n}(p) \force j_{\alpha,n}(q_\kappa') \in E(\name{G(j_{\alpha,n}(\po_\kappa)})$. 
Let $D \subset j_{\alpha,n}(\po_\kappa)\setminus (\kappa+1)$ be a $\po-$name for the set of all  $t' \in j_{\alpha,n}(\po_\kappa)\setminus (\kappa+1)$ for which $t' \fr j_{\alpha,n}(q'_\kappa) \in E$. 
$D$ is a dense open set of $j_{\alpha,n}(\po_\kappa)\setminus (\kappa+1)$ so by the construction in the first part \text{(1)}, there is a condition $q \geq p$ in $\po_\kappa$ such 
that $j_{\alpha,n}(q)\uhr (\kappa+1) \force j_{\alpha,n}(q)\setminus (\kappa+1) \in D$. 
Taking $q' = q \fr q'_\kappa$, we conclude that $q' \geq p'$ and $j_{\alpha,n}(q')\uhr (\kappa+1) \force j_{\alpha,n}(q')\setminus (\kappa+1) \in E$. 
\end{proof}

\noindent \begin{definition}
Define the following in $V[G]$:
\begin{enumerate} 
\item
Let $G_{\alpha,n} \subset j_{\alpha,n}(\po)$ denote the unique generic filter over $M_{\alpha,n}$
with $j_{\alpha,n}``G \subset G_{\alpha,n}$, whose existence proved in Lemma
\ref{Lemma - II - po genericity extension}. 

\item 
Let $i_{\alpha,n} : V[G] \to M_{\alpha,n}[G_{\alpha,n}]$ be the unique extension of $j_{\alpha,n}$ to an embedding in $V[G]$. $i_{\alpha,n}$ is defined by $i_{\alpha,n}(\name{x}_{G}) = j_{\alpha,n}(\name{x})_{G_{\alpha,n}}$.  

\item 
Let $U_{\alpha,n}$ be the normal measure defined by
\[ U_{\alpha,n} = \{X \subset \kappa \mid \kappa \in i_{\alpha,n}(X)\}.\]
\end{enumerate}
\end{definition}

\begin{remark}\label{Remark - I - j``G generates U}
The proof of Lemma \ref{Lemma - II - po genericity extension} 
implies that for every $X  = \name{X}_G \subset  \kappa$ in $V[G]$, $X \in U_{\alpha,n}$ if and only if there  
exists a condition $p \in G$ so that in $M_{\alpha,n}[G]$,
\[j_{\alpha,n}(p)\setminus(\kappa+1) \force_{j_{\alpha,n}(\po)\setminus(\kappa+1)} \can{\kappa} \in j_{\alpha,n}(\name{X}).\] 
Equivalently, $X\in U_{\alpha,n}$ if and only if there is $p \in G$ so that 
\[ p \fr (j_{\alpha,n}(p)\setminus \kappa+1) \force_{j_{\alpha,n}(\po)} \can{\kappa} \in \name{X}.\] 
\end{remark}
${}$

Let
$\vec{\rho} = \la \rho_\zeta \mid \zeta < \kappa^+\ra$ 
be a sequence of canonical functions at $\kappa$, so that each $\rho_\zeta \in {}^\kappa\kappa$ has Galvin-Hajnal degree $\zeta$.
We have that 
\begin{enumerate}
\item for every $\zeta_0 < \zeta_1 < \kappa^+$, the set
$\{\nu < \kappa \mid \rho_{\zeta_0}(\nu)  \geq \rho_{\zeta_1}(\nu) \}$ is bounded in $\kappa$,
\item for every elementary embedding $j : V \to M$ in $V$, 
 then $\zeta = j(\rho_\zeta)(\kappa)$ for all $\zeta < \kappa^+$.  
\end{enumerate}

Let $\vec{g} = \la g_\nu \mid \nu < \kappa\ra$ be the generic sequence of collapsing functions induced from $G$,
i.e., $g_\nu : \nu^+ \onto \Theta(\nu)^{++}$ for every $\nu \leq \kappa$ which is an inaccessible limit of measurbale cardinals.\\

We can use each $g_\nu: \nu^+ \onto \Theta(\nu)^{++}$ to construct
a surjection $h_\nu : \nu^+ \twoheadrightarrow \Theta^{++}(\nu)$, which is canonically defined from $g_\nu$\footnote{For example, restrict the domain of  $g_\nu$ to ordinals $\mu < \nu^+$
so that $\mu = \min(g_\nu^{-1}(\{\tau\}))$ for some $\tau < \Theta^{++}(\nu)$, 
and then collapsing the restricted domain to achieve the bijection $h_\nu$.}. 
For every $\delta < \theta^{++}$ let $\zeta_\delta$ be the unique $\zeta < \kappa^+$ so that $\delta = h_\kappa(\zeta_\delta)$.
\begin{definition}\label{Definition - II - psi functions}
Let $\delta < \theta^{++}$. Define a function $\psi_\delta: \kappa \to \kappa$ by
$\psi_\delta(\nu) = h_\nu(\rho_\zeta(\nu))$ for every $\nu < \kappa$. 
\end{definition}

It follows that for every $\delta_0,\delta_1 < \theta^{++}$, the set
 $\{\nu < \kappa \mid \psi_{\delta_0}(\nu) \neq \psi_{\delta_1}(\nu)\}$ is bounded in $\kappa$. 
Also, for every $\alpha < l(\vec{F})$ and $n < \omega$, it is easy to check that $i_{\alpha,n}(\psi_\delta)(\kappa) = \delta$ for every $\delta < \theta$.

\begin{lemma}\label{Lemma - II - Extension Coincides with Measure Ultrapower}
$i_{\alpha,n} : V[G] \to M_{\alpha,n}[G_{\alpha,n}]$ coincides with the
ultrapower embedding of $V[G]$ by $U_{n,\alpha}$.
\end{lemma}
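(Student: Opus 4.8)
The plan is to show that the canonical factor map from the ultrapower $\Ult(V[G],U_{\alpha,n})$ into $M_{\alpha,n}[G_{\alpha,n}]$ is onto, which combined with elementarity forces it to be the identity. Write $j_{U} : V[G] \to N := \Ult(V[G],U_{\alpha,n})$ for the ultrapower embedding. Since $U_{\alpha,n} = \{X \subseteq \kappa \mid \kappa \in i_{\alpha,n}(X)\}$ is exactly the measure derived from $i_{\alpha,n}$ via the seed $\kappa$, there is a unique elementary $k : N \to M_{\alpha,n}[G_{\alpha,n}]$ with $k \circ j_U = i_{\alpha,n}$ and $k([f]_{U_{\alpha,n}}) = i_{\alpha,n}(f)(\kappa)$ for every $f \in {}^\kappa V[G] \cap V[G]$; moreover $k\uhr\kappa+1 = \mathrm{id}$ and $k(\kappa) = \kappa$. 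So everything reduces to proving $k$ is surjective, i.e.\ that every element of $M_{\alpha,n}[G_{\alpha,n}]$ has the form $i_{\alpha,n}(f)(\kappa)$ for some $f : \kappa \to V[G]$ in $V[G]$.

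First I would handle the ``ground model part''. Every element of $M_{\alpha,n}$ is $j_{\alpha,n}(f)(\gamma)$ for some $f \in V$ and some $\gamma \in [\kappa,\theta^{++})$, by the footnote remark that the generators of $j_{\alpha,n}$ lie in $[\kappa,\theta^{++})$. Using the functions $\psi_\delta$ of Definition \ref{Definition - II - psi functions}, which satisfy $i_{\alpha,n}(\psi_\delta)(\kappa) = \delta$ for $\delta < \theta$, one obtains every seed below $\theta$; and since the ordinals below $\theta^{++}$ are coded by the canonical functions composed with the generic surjection $h_\kappa$ (again via $\psi_\delta$, now for all $\delta<\theta^{++}$ — here $\psi_\delta \in V[G]$), every $\gamma < \theta^{++}$ is $i_{\alpha,n}(\psi_\gamma)(\kappa)$ with $\psi_\gamma : \kappa \to \kappa$ in $V[G]$. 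Composing, each $x \in M_{\alpha,n}$ equals $i_{\alpha,n}(\bar f)(\kappa)$ where $\bar f(\nu) = f(\psi_\gamma(\nu))$. Hence $\rng(k) \supseteq M_{\alpha,n}$, and in particular $M_{\alpha,n} \subseteq N$.

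Next, the ``generic part''. An arbitrary element of $M_{\alpha,n}[G_{\alpha,n}]$ is $\tau_{G_{\alpha,n}}$ for some $j_{\alpha,n}(\po)$-name $\tau \in M_{\alpha,n}$, and by the previous paragraph $\tau = i_{\alpha,n}(f)(\kappa) = j_{\alpha,n}(f)(\kappa)$ for a function $f \in V[G]$, $f : \kappa \to V[G]$, which we may take so that each $f(\nu)$ is a $\po$-name. Now $G_{\alpha,n}$ is generated over $M_{\alpha,n}$ by $j_{\alpha,n}``G$ together with the tail part analysed in Lemma \ref{Lemma - II - po genericity extension}; the point, made precise in Remark \ref{Remark - I - j``G generates U}, is that membership in $G_{\alpha,n}$ of any $j_{\alpha,n}(p)$ is decided by whether $p \in G$, and the value $\tau_{G_{\alpha,n}}$ depends only on $j_{\alpha,n}``G$. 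Concretely, define $F : \kappa \to V[G]$ in $V[G]$ by $F(\nu) = (f(\nu))_{G}$ (evaluating the $\po$-name $f(\nu)$ by the generic $G$); then $i_{\alpha,n}(F)(\kappa) = j_{\alpha,n}(f)(\kappa)$ evaluated by $G_{\alpha,n}$, which is $\tau_{G_{\alpha,n}}$, because $i_{\alpha,n}$ is by definition the map $\name x_G \mapsto j_{\alpha,n}(\name x)_{G_{\alpha,n}}$ and this commutes with the Łoś-style representation of $\tau$. Therefore $\tau_{G_{\alpha,n}} = i_{\alpha,n}(F)(\kappa) \in \rng(k)$.

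This shows $k$ is onto, hence an isomorphism, and so $i_{\alpha,n}$ is (up to the isomorphism $k$) the ultrapower embedding by $U_{\alpha,n}$; normality of $U_{\alpha,n}$ follows since $\kappa = [\mathrm{id}]$. The step I expect to be the main obstacle is the bookkeeping in the generic part: one must be careful that the function $F$ witnessing a given element is genuinely in $V[G]$ (not merely in $V[G_{\alpha,n}]$) and that evaluating names ``pointwise by $G$ along $f$'' really does agree with first pushing forward by $j_{\alpha,n}$ and then evaluating by $G_{\alpha,n}$ — this is exactly where the uniqueness of $G_{\alpha,n}$ and the ``$j_{\alpha,n}``G$ generates'' phenomenon of Lemma \ref{Lemma - II - po genericity extension} and Remark \ref{Remark - I - j``G generates U} are doing the real work. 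The surjectivity onto the ordinals below $\theta^{++}$, while routine, also hinges on having arranged the $\psi_\delta$'s in $V[G]$ using the generic collapse, so that part of the argument genuinely uses that we are in the extension and not in $V$.
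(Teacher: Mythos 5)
Your argument is essentially the paper's: reduce to showing every element of $M_{\alpha,n}[G_{\alpha,n}]$ is of the form $i_{\alpha,n}(\phi)(\kappa)$, use the functions $\psi_\delta$ to re-parameterize the generators $\delta<\theta^{++}$ by the seed $\kappa$, and pass from a $V$-function of names to a $V[G]$-function by evaluating pointwise at $G$, relying on $G_{\alpha,n}=i_{\alpha,n}(G)$. You split it into a ``ground-model part'' and a ``generic part'' and exhibit the factor map $k$ explicitly, whereas the paper does both steps at once, but the mechanism is identical. One small slip worth flagging: when you write $j_{\alpha,n}(f)(\kappa)$ for the function $f\in V[G]$ obtained by composing with $\psi_\gamma$, this should be $i_{\alpha,n}(f)(\kappa)$, since $j_{\alpha,n}$ is only defined on $V$; and your worry that $F$ might live only in $V[G_{\alpha,n}]$ is vacuous, since $G_{\alpha,n}$ is constructed inside $V[G]$ and $M_{\alpha,n}[G_{\alpha,n}]\subseteq V[G]$.
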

\begin{proof}
Since $U_{\alpha,n}$ is derived from $i_{\alpha,n}$, it is sufficient to verify that every $x \in M_{\alpha,n}[G_{\alpha,n}]$ can be represented as $x = i_{\alpha,n}(\phi)(\kappa)$, for some $\phi: \kappa \to V[G]$.
Every $x \in M_{\alpha,n}[G_{\alpha,n}]$ is of the form 
$x = (\name{x})_{G_{\alpha,n}}$, where $\name{x} = j_{\alpha,n}(f)(\delta) = 
i_{\alpha,n}(f)(\delta)$,
for some $f: \kappa \to V$ in $V$ and $\delta < \lambda$.
We may assume that $f(\nu)$ is a $\po$ name, for every $\nu < \kappa$.  
Define $\phi : \kappa \to V[G]$ by $\phi(\nu)  = f(\psi_\delta(\nu))_{G}$.
It follows that $x = i_{\alpha,n}(\phi)(\kappa)$ as
 $\delta = i_{\alpha,n}(\psi_\delta)(\kappa)$ and $G_{\alpha,n}  = i_{\alpha,n}(G)$.
\end{proof}

\subsection{The Mitchell Order in $V[G]$}

We show that $U_{\alpha,n}$, $\alpha < l(\vec{F})$, $n < \omega$ are the only normal measures on $\kappa$ in $V[G]$, and that
$U_{\alpha',n'} \mo U_{\alpha,n}$ if and only if 
$\alpha' < \alpha$ and $n' \geq n$. 
To prove these results, we use Schindler's description (\cite{Schindler - ICM}) of ultrapower restrictions.

\begin{proposition}\label{Proposition - II - Analysis of Normal Measures by P}
Let $W$ be a normal measure on $\kappa$ in $V[G]$.
There exists some $\alpha < l(\vec{F})$ and $n < \omega$, 
such that $W = U_{\alpha,n}$.
\end{proposition}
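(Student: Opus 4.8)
The plan is to analyze the normal measure $W$ on $\kappa$ in $V[G]$ via its ultrapower embedding $j_W : V[G] \to M_W$, recover from it an elementary embedding on the ground model $V$, and identify that ground-model embedding with one of the $j_{\alpha,n}$. Let me sketch the steps.

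\textbf{Step 1: Locate the ground-model embedding.} Let $j_W : V[G] \to M_W$ be the ultrapower by $W$, and consider its restriction $j_W \uhr V : V \to N$ where $N = \{ j_W(f)(\kappa) \mid f \in V, f : \kappa \to V \}$ is the transitive collapse of the hull. By the usual lifting/restriction arguments (and here I would invoke Schindler's analysis of ultrapower restrictions, as the paper suggests, since $V = \K(V[G])$ is a core model under the stated large-cardinal hypotheses), $j_W \uhr V$ is an iterated ultrapower of $V$ by extenders from $E$. The key point is that $\cp(j_W \uhr V) = \kappa$, that $H(\kappa^+)^{M_W} = H(\kappa^+)^{N}$ agrees with $V[G]$ below $\kappa^+$ on subsets coded there, and that $M_W$ (hence $N$) is closed under $\kappa$-sequences from $V[G]$.

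\textbf{Step 2: Constrain the iteration using Lemma \ref{Lemma - II - basic results on po}(3).} Since $W$ concentrates on $\kappa$ and $M_W = \Ult(V[G],W)$, every element of $M_W$ has the form $j_W(\phi)(\kappa)$ for $\phi : \kappa \to V[G]$ in $V[G]$. Combined with part (3) of Lemma \ref{Lemma - II - basic results on po}, each such $\phi$ is dominated by a ground-model function $f : \kappa \to [\kappa]^{<\kappa}$ with $|f(\nu)| \le \Theta(\nu)^{++}$; applying $j_W$, the first measurable above $\kappa$ in $N$ is at most $j_W(\Theta)(\kappa)$, and the generators of $j_W \uhr V$ must live below $j_W(\Theta)(\kappa)^{++} = (\theta^{++})^V$-image. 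This forces the iteration $j_W \uhr V$ to be: first an ultrapower by a full $(\kappa,\theta^{++})$-extender on $\kappa$ — which by hypothesis (4) must be some $F_\alpha$ from $\vec F$ — followed by further ultrapowers all taken by measures on (images of) $\Theta$-values, i.e. on $\theta$; assumption (6) (no extenders with $\cp < \kappa$ and $\nu \ge \kappa$) rules out interleaving lower extenders that would move $\kappa$. One must argue the tail is a finite iteration by the unique normal measure $U = U_\theta$ and its images, i.e. exactly $i_n^{M_{F_\alpha}}$ for some $n < \omega$; here the finiteness comes from well-foundedness of the iteration together with the fact that each application strictly increases the image of $\theta$, and $\kappa$ must remain the critical point of the whole composite so only finitely many steps can occur below the eventual $j_W(\kappa)$. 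Hence $j_W \uhr V = i_n^{M_{F_\alpha}} \circ j_{F_\alpha} = j_{\alpha,n}$ for some $\alpha < l(\vec F)$, $n < \omega$, and $N = M_{\alpha,n}$.

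\textbf{Step 3: Identify the generic and conclude $W = U_{\alpha,n}$.} Having $j_W \uhr V = j_{\alpha,n}$, the embedding $j_W$ lifts $j_{\alpha,n}$ to $V[G]$, so $j_W(G)$ is an $M_{\alpha,n}$-generic filter for $j_{\alpha,n}(\po)$ with $j_{\alpha,n}``G \subseteq j_W(G)$. By the uniqueness established in Lemma \ref{Lemma - II - po genericity extension}, $j_W(G) = G_{\alpha,n}$, hence $M_W = M_{\alpha,n}[G_{\alpha,n}]$ and $j_W = i_{\alpha,n}$. By Lemma \ref{Lemma - II - Extension Coincides with Measure Ultrapower}, $i_{\alpha,n}$ is the ultrapower by $U_{\alpha,n}$; comparing derived measures, $W = \{ X \subseteq \kappa \mid \kappa \in j_W(X)\} = U_{\alpha,n}$.

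\textbf{Main obstacle.} The hard part is Step 2: rigorously showing that the ground-model embedding $j_W \uhr V$ is \emph{exactly} of the form $j_{\alpha,n}$ and not some more complicated iteration — in particular ruling out (a) the use of extenders not on the main sequence or not full, (b) interleaved ultrapowers by extenders with critical point below $\kappa$, and (c) an infinite tail. This is where Schindler's description of ultrapower restrictions in the core model, the closure of $M_W$ under $\kappa$-sequences, and the precise bookkeeping from Lemma \ref{Lemma - II - basic results on po}(3) (giving $f : \kappa \to [\kappa]^{<\kappa}$ with $|f(\nu)| \le \Theta(\nu)^{++}$) all have to be combined carefully; hypotheses (4)--(6) on $E$ are exactly what close the argument.
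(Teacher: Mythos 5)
Your overall architecture (restrict $j_W$ to $V = \K(V[G])$, invoke Schindler to realize $j_W\uhr V$ as a normal iteration, then lift back through the generic) matches the paper's proof, and Steps 1 and 3 are essentially correct. The problem is in Step 2, precisely at the point you flag as the main obstacle.

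Your argument that the tail iteration by $U$ must be finite --- ``the finiteness comes from well-foundedness of the iteration together with the fact that each application strictly increases the image of $\theta$, and $\kappa$ must remain the critical point of the whole composite so only finitely many steps can occur below the eventual $j_W(\kappa)$'' --- does not work. Well-foundedness (iterability) permits iterations of any ordinal length; an $\omega$-iteration by $U$ and its images, with critical points $\theta < \theta_1 < \theta_2 < \cdots$, is a perfectly well-founded normal iteration, and all those critical points lie below $j_W(\kappa)$. Nothing about $\cp(j_W\uhr V) = \kappa$ bounds the number of later steps. The paper's finiteness argument is of a completely different nature: it is a $\sigma$-closure argument. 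If some measure were used $\omega$-many times, the supremum $\delta$ of the resulting critical points would have $\Cf^{V[G]}(\delta) = \omega$, hence $\Cf^{M_W}(\delta)=\omega$ (since $M_W$ is closed under $\omega$-sequences), yet $\delta$ is inaccessible in the iterate $M$; so the generic $G_W \subset j(\po)$ would have to add a cofinal $\omega$-sequence to $\delta$, contradicting that $j(\po)$ is $\sigma$-closed. The same $\sigma$-closure argument is used a second time to bound $\eta < \omega$ when showing $F$ lies in some finite $N_\eta = \Ult^\eta(V,U)$ (via $\theta_\omega$ having countable cofinality). This is the missing idea, and without it Step 2 does not go through.

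A secondary issue: you assert $j_W\uhr V = j_{\alpha,n}$ outright and then derive $j_W(G)=G_{\alpha,n}$ from the uniqueness lemma. The paper does not establish that $j_W\uhr V$ equals $j_{\alpha,n}$ on the nose; it only factors $j_W\uhr V = \pi_1 \circ j_{\alpha,n}$ with $\cp(\pi_1) > \kappa$ (indeed $> \theta^{++}$), and then uses Remark \ref{Remark - I - j``G generates U} together with an application of $\pi_1$ to show $U_{\alpha,n} \subseteq W$, which gives $W = U_{\alpha,n}$ by maximality of ultrafilters. Your version silently needs $\pi_1 = \id$, which you do not argue (and which only follows a posteriori once $W = U_{\alpha,n}$ is known). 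Finally, your appeal to Lemma \ref{Lemma - II - basic results on po}(3) for the domination bound is a workable substitute for the paper's use of the functions $\psi_\delta$ to get $j_W(\kappa) \ge \theta^{++}$ and the collapse of $\theta^{++}$ to $\kappa^+$ in $M_W$, so that part of your outline is fine in spirit; but the two genuine gaps above need to be filled.
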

\begin{proof}
Let $j_W : V[G] \to M_W  \cong \Ult(V[G],W)$ be the induced
ultrapower embedding and $j = j_W\uhr V: V \to M$ be its restriction to $V = \K(V[G])$.
According to Schindler (\cite{Schindler - ICM}) there is an iteration tree $T$ on $V$
and a cofinal branch $b$ so that $\pi^{T}_{0,b} = j_W\uhr V : V \to M$ results from the normal iteration of $T$ along $b$.
Furthermore $M_W = M[G_W]$ where $G_W = j_W(G) \subset j(\po)$ is $M$-generic.
In \ref{Definition - II - psi functions} we defined a sequence of functions $\la \psi_\delta \mid \delta < \lambda\ra$ so that 
so that $\{\nu < \kappa \mid \psi_{\delta_0}(\nu) = \psi_{\delta_1}(\nu)\}$ is bounded in $\kappa$ for every distinct $\delta_0,\delta_1 < \theta$.
It follows that $j_W(\psi_{\delta_0})(\kappa) \neq j_W(\psi_{\delta_1})(\kappa)$ for every $\delta_0 \neq \delta_1$, thus $j_W(\kappa) \geq \theta^{++}$.  
Furthermore, as $\psi_\delta(\nu) < \Theta^{++}(\nu)$ for every $\delta < \theta^{++}$ and $\nu < \kappa$, it follows that $j_W(\psi_\delta)(\kappa) < j_W(\Theta^{++})(\kappa) = j(\Theta^{++})(\kappa)$
thus $\theta^{++} \leq j(\Theta^{++})(\kappa)$.
As $M_W = M[G_W]$ and $G_W \subset j(\po)$ is generic, $(\theta^{++})^V$ is collapsed to $\kappa^+$ in 
$M_W$. With this in mind, let us consider the iteration tree $T$ and the cofinal branch $b$ inducing $\pi^T_{0,b} = j$.
We first claim that the iteration of $b$ does not use the
same extender more the finitely many times. Otherwise there would be an ordinal
$\delta \leq  \pi_0(\kappa)$ which is the the limit of the critical points of an $\omega-$subiteration by the same measure. In particular $\Cf^{V[G]}(\delta) = \omega$.
Since $\delta$ is the image of 
the critical points it must be inaccessible in $M$. Furthermore, its cofinality in $M[G_W]$ is $\omega$ because $M[G_W] = M_W$ is closed under $\omega$ sequence in $V[G]$. It follows that the $M-$ generic set $G_W \subset j(\po)$ introduces a cofinal $\omega-$sequence in $\delta$. However this is impossible as $j(\po)$ is a $\sigma-$closed forcing. \\
Since $\cp(\pi^T_{0,b}) = \cp(j_W) = \kappa$, $\pi^T_{0,b}$ factors into $\pi^T_{0,b} = \pi^T_{1,b} \circ j_F$, where $F$ is an extender on $\kappa$ in $V$ and $\cp(\pi^T_{1,b}) > \kappa$. By the elementarity of $j_F : V \to M_F$, $j_F(\kappa)$ is an
inaccessible limit of measurable cardinals in $M_F \cong \Ult(V,F)$ and there are no extenders $F' \in M_F$ for which $\cp(F') < j_F(\kappa)$ and $\alpha(F') \geq j_F(\kappa)$.
Since the rest of the iteration along $b$ can apply each extender finitely many times,
$j_F(\kappa)$ must be fixed point of $\pi^T_{1,b}$, and an inaccessible and limit of measurable cardinals in $M$. By Lemma \ref{Lemma - II - basic results on po}, $j_F(\kappa)$ is not collapsed
in $M_W = M[G_W]$. It follows that $j_F(\kappa) \geq \theta^{++}$ so  $\nu(F) \geq \theta^{++}$.
Since the normal measure $U$ on $\theta$ is the only measure which overlaps extenders
on $\kappa$, it follows that $F$ belongs to $N_\eta \cong \Ult^\eta(V,U)$ for some ordinal $\eta$. We claim that $\eta < \omega$.
Otherwise, if $\theta_\omega$ is the $\omega-$th image of $\theta = \cp(U)$ the $\theta_\omega$ is an inaccessible cardinal in both $N_\eta$ and $M_F \cong \Ult(V,F)$. 
This is impossible as $\Cf^{V[G]}(\theta_\omega) = \omega$ but  $j(\po)$ $\sigma-$closed.
We conclude that $\eta = n$ for some finite $n < \omega$, and as $\nu(F) \geq \theta^{++}$ we get that $F = F_{\alpha,n}$ for some $\alpha < \len(\vec{F})$. 
Finally, we verify that $W = U_{\alpha,n}$. We can rewrite he restriction $j_W \uhr V = j$ as $j = \pi_1 \circ j_{\alpha,n}$ where $\cp(\pi_1) > \kappa$.
We clearly have that $j(\po)\uhr(\kappa+1) = \po$. The coding posets in $\po$ and the fact $H(\kappa^+)^{M_W} = H(\kappa^+)^V$ imply that $G = G_W\uhr (\kappa+1)$. 
Since we also have that $j``G \subset G_W$ we conclude that $p \fr (j(p)\setminus \kappa+1) \in G_W$ for every $p \in G$.
Suppose that $X = \name{X}_G \in U_{\alpha,n}$. 
According to  Remark \ref{Remark - I - j``G generates U}, 
there is some $p \in G$ so that
$p \fr (j_{\alpha,n}(p)\setminus \kappa+1) \force \can{\kappa} \in j_{\alpha,n}(\name{X})$.
By applying $\pi_1$ we conclude that $p \fr (j(p) \setminus \kappa+1) \force \can{\kappa} \in j(\name{X})$, thus $X \in W$. 
\end{proof}

\begin{proposition}\label{Proposition - II - MO ordering by P}
For every $\alpha,\alpha' < \len(\vec{F})$  and $n,n' < \omega$, 
$U_{\alpha',n'} \mo U_{\alpha,n}$ if and only if  $\alpha' < \alpha$ and $n' \geq n$. 
\end{proposition}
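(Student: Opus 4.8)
The plan is to prove the two implications separately, throughout using the description of $M_{U_{\alpha,n}}:=\Ult(V[G],U_{\alpha,n})$ as $M_{\alpha,n}[G_{\alpha,n}]$ with $j_{U_{\alpha,n}}\uhr V=j_{\alpha,n}=i_n^{M_{F_\alpha}}\circ j_{F_\alpha}$; thus $M_{\alpha,n}$ is $V$ with $\kappa$ hit by $F_\alpha$ and then $n$ ultrapowers by $U$ applied, carrying $\theta$ to $\theta_n$. I will also use that $M_{\alpha,n}[G_{\alpha,n}]$ agrees with $V[G]$ on $\power(\kappa)$ and contains $G$ as $G_{\alpha,n}\uhr(\kappa+1)$.

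For ``$\Leftarrow$'', assume $\alpha'<\alpha$ and $n'\geq n$; the goal is $U_{\alpha',n'}\in M_{\alpha,n}[G_{\alpha,n}]$. First I would translate the hypotheses into facts about inner models. Since $\vec F$ is $\mo$-increasing and lists \emph{all} full $(\kappa,\theta^{++})$-extenders on $E$, ``$\alpha'<\alpha$'' is exactly ``$F_{\alpha'}\in M_{F_\alpha}$'', so $N':=\Ult(M_{F_\alpha},F_{\alpha'})$ is an inner model of $M_{F_\alpha}$; and by $(\theta+2)$-strength --- which gives $V_{\theta+2}^V\subset M_{F_\alpha}$, hence $V_{\kappa+\omega}^V=V_{\kappa+\omega}^{M_{F_\alpha}}$ and agreement of the function spaces $\kappa\to V_{\kappa+\omega}^V$ --- the model $N'$ agrees with $M_{F_{\alpha'}}=\Ult(V,F_{\alpha'})$ up to rank $j_{F_{\alpha'}}(\kappa)+\omega$. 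As $U\in N'$, the restriction $i_n^{M_{F_\alpha}}\uhr N'$ is the $n$-fold ultrapower of $N'$ by $U$, whose target is an inner model of $M_{\alpha,n}$; and since $n'\geq n$, one may apply inside $M_{\alpha,n}$ a further $n'-n$ ultrapowers by $i_n^{M_{F_\alpha}}(U)$ (the measure on $\theta_n$, by the Corollary) to obtain an inner model $N''$ of $M_{\alpha,n}$ that agrees with $M_{\alpha',n'}$ up to rank $j_{\alpha',n'}(\kappa)+\omega$. In particular $j_{\alpha',n'}(\po)$, together with everything needed to decide, for a nice $\po$-name $\name X\in H(\kappa^+)$, the forcing statement of Remark~\ref{Remark - I - j``G generates U} characterizing ``$\name X_G\in U_{\alpha',n'}$'', lies in $N''\subset M_{\alpha,n}$. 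Since moreover $G\in M_{\alpha,n}[G_{\alpha,n}]$ and $\power(\kappa)^{M_{\alpha,n}}=\power(\kappa)^V$, that characterization can be evaluated inside $M_{\alpha,n}[G_{\alpha,n}]$, so it computes $U_{\alpha',n'}$ there; hence $U_{\alpha',n'}\in M_{\alpha,n}[G_{\alpha,n}]=M_{U_{\alpha,n}}$.

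For ``$\Rightarrow$'', suppose $U_{\alpha',n'}\in M_{\alpha,n}[G_{\alpha,n}]$. The plan is to re-run the analysis of Proposition~\ref{Proposition - II - Analysis of Normal Measures by P} inside $M_{\alpha,n}[G_{\alpha,n}]$: $M_{\alpha,n}=L[j_{\alpha,n}(E)]$ is again a core model satisfying, around $\kappa$, the hypotheses of Subsection~\ref{SubSection - Collapsing and Coding} with $\theta_n$ and $\la F_{\beta,n}\mid\beta<\alpha\ra$ in place of $\theta,\vec F$ (by elementarity of $j_{\alpha,n}$ below its critical point $\kappa$, together with $(\theta+2)$-strength), and $M_{\alpha,n}[G_{\alpha,n}]$ is its generic extension by $j_{\alpha,n}(\po)$; so that analysis identifies $U_{\alpha',n'}$ with $U^{M_{\alpha,n}}_{\beta,m}$ for some $\beta<\alpha$, $m<\omega$. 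By the computation of the previous direction, $U^{M_{\alpha,n}}_{\beta,m}=U_{\beta,n+m}$; and $(\gamma,k)\mapsto U_{\gamma,k}$ is injective --- $j_{U_{\gamma,k}}(\Theta)(\kappa)=\theta_k$ recovers $k$ (the Corollary), and the distinct extenders $F_{\gamma,k}$, recovered from $j_{U_{\gamma,k}}\uhr V$, recover $\gamma$ --- so $\alpha'=\beta<\alpha$ and $n'=n+m\geq n$. (This also shows directly that $n'<n$ is impossible, since then $j_{U_{\alpha',n'}}(\Theta)(\kappa)=\theta_{n'}<\theta_n\leq\theta_{n+m}$.)

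I expect the crux, and main obstacle, to be the bookkeeping of the $U$-iteration inside the inner models in the ``$\Leftarrow$'' direction: showing $N'=\Ult(M_{F_\alpha},F_{\alpha'})$ agrees with $M_{F_{\alpha'}}$ far enough up and that this agreement survives the $n'$ ultrapowers by $U$, so that $M_{\alpha,n}$ really reaches an inner model containing $j_{\alpha',n'}(\po)$ with its forcing relation. This is exactly where the large-cardinal hypothesis is used --- $(\theta+2)$-strength of the $F_\alpha$, together with the minimality of $V=L[E]$ (which forces $\theta^+,\theta^{++}$ to be computed correctly in the relevant iterates and $(\theta^{++})^V$ to be a fixed point of every $i_m$) --- and the verification splits according to whether an extender coordinate lies below $\theta$ (where every $i_m$ is the identity, so nothing is moved) or in $[\theta,\theta^{++})$ (where a diagonal computation through a representing function $[\theta]^{<\omega}\to\theta^{++}$ is needed). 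A secondary point is justifying that Proposition~\ref{Proposition - II - Analysis of Normal Measures by P} genuinely applies at $\kappa$ --- not at $j_{\alpha,n}(\kappa)$ --- inside $M_{\alpha,n}[G_{\alpha,n}]$, i.e.\ that $M_{\alpha,n}$ inherits the minimal overlapping configuration at $\kappa$.
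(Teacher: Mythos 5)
Your "$\Leftarrow$" direction is essentially the paper's argument, fleshed out. The paper simply observes that $F_{\alpha',n'}$, the iteration $\po$, and $G$ all belong to $M_{\alpha,n}[G_{\alpha,n}]$, so the construction of $U_{\alpha',n'}$ can be carried out there; your careful passage through the inner models $N'$ and $N''$ is exactly the substance of the paper's "it is clear that $F_{\alpha',n'}\in M_{\alpha,n}$", and the bookkeeping you flag (propagating $V_{\theta+2}$-agreement through the $U$-iterations, coordinates in $[\theta,\theta^{++})$ handled by representing functions $[\theta]^{<\omega}\to\theta^{++}$) is the content of the paper's lemma showing $F_{\alpha,n}$ is derived from $i_n^{M_{F_\alpha}}\circ j_{F_\alpha}$.

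Your "$\Rightarrow$" direction, however, is a genuinely different proof. The paper performs a direct coiteration: it applies Schindler's theorem to $i':M\to\Ult(M,W')$ (with $M=M_{\alpha,n}[G_{\alpha,n}]$, $W'=U_{\alpha',n'}$), concludes that $\K(\Ult(M,W'))$ arises from a comparison of $\K(M)=M_{\alpha,n}$ with $\K(M')=M_{\alpha',n'}$, and then reads off $n'\geq n$ and $\alpha'<\alpha$ by tracking the first disagreements (first the measure on $\theta_{\min(n,n')}$, then the full $(\kappa,\theta^{++})$-extenders). You instead relocate Proposition \ref{Proposition - II - Analysis of Normal Measures by P} wholesale into $M_{\alpha,n}[G_{\alpha,n}]$, classify $U_{\alpha',n'}$ as some internal $U^{M_{\alpha,n}}_{\beta,m}$ with $\beta<\alpha$, identify $U^{M_{\alpha,n}}_{\beta,m}=U_{\beta,n+m}$, and finish by injectivity of $(\gamma,k)\mapsto U_{\gamma,k}$. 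This is an elegant reduction that avoids re-running the comparison analysis, but it shifts the burden onto two verifications that your sketch leaves as asides: (i) that $M_{\alpha,n}$ genuinely inherits the Subsection \ref{SubSection - Collapsing and Coding} configuration at $\kappa$ --- $\theta_n$ as the first measurable, $\la F_{\beta,n}\mid\beta<\alpha\ra$ as \emph{all} the full extenders with $o(\kappa)$ not longer, $i_n(U)$ as the unique overlapping measure, $j_{\alpha,n}(\po)\uhr(\kappa+1)$ as the internal iteration --- so that Proposition \ref{Proposition - II - Analysis of Normal Measures by P} and Schindler's theorem apply over $M_{\alpha,n}=\K(M_{\alpha,n}[G_{\alpha,n}])$; and (ii) $i^{M_{\alpha,n}}_m(F_{\beta,n})=F_{\beta,n+m}$, so that the internal and external constructions produce the same measure via $H(\kappa^+)$-agreement and Remark \ref{Remark - I - j``G generates U}. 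Neither is automatic. For (i) in particular, your phrase "by elementarity of $j_{\alpha,n}$ below its critical point" does not do the work, since that only gives agreement on $V_\kappa$; what is needed is the explicit description of $M_{\alpha,n}$ as $\Ult^n(M_{F_\alpha},U)$, $\kappa$-closure (so $H(\kappa^+)^{M_{\alpha,n}}=H(\kappa^+)^V$), coherence of $j_{F_\alpha}(E)$ below $\alpha(F_\alpha)$ (so the $\vec F$-sequence truncates to $\alpha$), and that the iteration critical points stay in $[\kappa,\theta^{++}]$. In total the fine-structural content you defer is comparable in volume to the paper's coiteration argument, but the route is different and the two proofs are complementary.
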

\begin{proof}
Suppose first that $n' \geq n$ and $\alpha' < \alpha$.
It is clear that $F_{\alpha',n'} \in M_{\alpha,n}$ and  that
$\po^{V} = \po^{M_{\alpha,n}}$. 
The construction of $U_{\alpha',n'}$ requires $F_{\alpha',n'}$,$V_{\kappa+1}$, and $G$; all belong to $M_{\alpha,n}[G_{\alpha,n}] = \Ult(V[G],U_{\alpha,n})$, thus $U_{\alpha',n'} \mo U_{\alpha,n}$. \\

Suppose now that $U_{\alpha',n'} \mo U_{\alpha,n}$.
To simplify our notations, let us denote $W_{\alpha',n'}$ by $W'$ and $W_{\alpha,n}$ by $W$. Accordingly, let $j_{W'} : V[G] \to M' \cong \Ult(V[G],W')$ and $j_{W} : V[G] \to M \cong \Ult(V[G],W)$.
Note that $\K(M) = M_{\alpha,n}$ and $\K(M') = M_{\alpha',n'}$are both extender models. Let us denote $\K(M)$ by $L[E^{M}]$ and $\K(M')$ by $L[E^{M'}]$. 
Since $W' \mo W$, we can form an ultrapower by $W'$ in $M$. Let $i' : M \to N' \isom \Ult(M,W')$, and denote $\K(N')$ by $L[E^{N'}]$.
According to Schindler (\cite{Schindler - ICM}), $\K(N')$ results from a
normal iteration of $K(M)$. Namely, there is an iteration tree $T$ of $K(M)$, and a cofinal branch $b$,
so that $j_{W'}\uhr K(M) = \pi^{\K(M)}_{0,b} : \K(M) \to \K(N')$.
Moreover, the proof of Theorem 2.1 in \cite{Schindler - ICM} implies that iteration tree $T$ is the tree which results 
from a comparison between $\K(M)$ and $\K(N')$.
Thus, when applying a comparison process to $L[E^{M}]$ and $L[E^{N'}]$
we get that $L[E^{N'}]$ does not move, and the iteration tree $T$ on $L[E^{M}]$,
is determined by comparing the extender sequence of the iterands of $L[E^{M}]$ with $E^{N'}$.\\
Now, $M$ is the ultapower of $V[G]$ by a normal measure on $\kappa$, so $M \cap H(\kappa^+) = V[G] \cap H(\kappa^+)$. 
Therefore, when taking the ultrapower of both model by $W'$, we get that 
$i' \uhr \kappa^+ = j_{W'}\uhr \kappa^+$ and that $M' \cap V[G]_{i'(\kappa)} = N' \cap V[G]_{i'(\kappa)}$. In particular, $E^{N'}\uhr i'(\kappa) = E^{M'}\uhr i'(\kappa)$. \\
It follows that when coiterating $\K(M) = L[E^M]$ with $\K(M') = L[E^{M'}]$,  the $L[E^{M'}]-$side does not move below $j_{W'}(\kappa)$, namely $E^{M'} \uhr j_{W'}(\kappa)$ is fixed throughout the comparison.\\
We now conclude that $\alpha' < \alpha$ and $n' \geq n$ by coiterating $\K(M) = M_{\alpha,n}$ with $\K(M') = M_{\alpha',n'}$.
Let $m = \min(n,n')$. If $n \neq n'$ then the first point of distinction between the $E^M$ and $E^{M'}$ is at the index of the measure $U^m = i_m(U)$, which is 
$\alpha = (\theta_m^{++})^{\Ult^{m+1}(V,U)} < \theta^{++} < j_{W'}(\kappa)$. 
We must have that $n' \geq n$, as otherwise the first steps in the coiteration would consists of ultrapowers by $i_{n'}(U) \in E^{M'}\uhr j_{W'}(\kappa)$ on the $L[E^{M'}]$ side. This contradicts the assumption that $E^{M'}\uhr j_{W'}(\kappa)$ remains fixed.
Let $k = n' - n \geq 0$. It follows that the first $k$ 
steps of the coiteration are coincides with the $k-$ultrapower of $L[E^M]$ by $i_n(U)$.
Let us denote the resulting ultrapower of $L[E^M]$ by $L[E^{M,k}]$. 
It is clear that the extenders sequences $E^{M'}$ and $E^{M,k}$
agree on indices up to $\theta^{++}$.
Therefore, the first possible difference between $E^{M'}$ and $E^{M,k}$ 
would be in extenders $F$ so that $\nu(F) = \theta^{++}$. These are
the extenders with support $\theta^{++}$ in the models
$M_{\alpha,n'}$ and $M_{\alpha',n'}$. 
We claim that  $\alpha' \neq \alpha$. Otherwise the agreement between $E^{M'}$ and $E^{M,k}$ would go above all extenders $F$ with critical point $\cp(F) =\kappa$. But this would imply that the hole iteration $T$ of $\K(M)$ is above $\kappa+1$ (Note that the critical points 
in the first $k$ steps of the iteration are above $\kappa$) and so 
$\cp(\pi^{\K(M)}_{0,b}) > \kappa$. This is absurd as $\pi^{\K(M)}_{0,b}  = i'\uhr \K(M)$ where $\cp(i') = \kappa$. We conclude that $\alpha' \neq \alpha$.
Finally, if $\alpha' > \alpha$ then 
the first disagreement between $E^{M'}$ and $E^{M,k}$ would be at the extender
$F_{\alpha',n'} \in E^{M'}\uhr j_{W'}(\kappa)$. This would contradict the fact that $E^{M'}\uhr j_{W'}(\kappa)$ remains fixed in the comparison. 
It follows that $\alpha' < \alpha$. 
\end{proof}

\subsection{A Final Cut}

We apply a final cut forcing over $V[G]$. 
For every $X \subset \kappa$ in $V[G]$, let $\pX$ be the final cut forcing by $X$, defined 
in \cite{OBN - Mitchell order I} (see Section 7).

\begin{lemma}\label{Lemma - II - Cut Down Measure Preservation}
Suppose that $X$ is a subset of $\kappa$ in $V[G]$ and let $\GX \subset \pX$ be generic over $V[G]$. 
For every normal measure $U$ on $\kappa$ in $V[G]$, if $X \not\in U$ then $U$ has a unique extension $U^X$ in  $V[G*\GX]$. Furthermore, these are the only normal measures on $\kappa$ in $V[G*\GX]$.
\end{lemma}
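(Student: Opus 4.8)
\emph{Proof strategy.} The plan is to adapt the final-cut analysis of \cite{OBN - Mitchell order I} (Section 7), feeding in Proposition \ref{Proposition - II - Analysis of Normal Measures by P} in place of the classification of normal measures used there. I would first recall the properties of $\pX$ established in \cite{OBN - Mitchell order I}: it is $<\kappa^+$-distributive and $\sigma$-closed over $V[G]$, preserves the core model (so $\K(V[G*\GX]) = L[E]$ and $H(\kappa^+)^{V[G*\GX]} = H(\kappa^+)^{V[G]}$), carries a coding device in the spirit of Definition \ref{Definition - Collapse and Code} which makes $\GX$ the unique $\pX$-generic over any inner model sharing its $H(\kappa^+)$, and is built so that a normal measure $U$ on $\kappa$ in $V[G]$ with $X \in U$ has no normal extension in $V[G*\GX]$. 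Granting these, there are three points: (i) every $U$ with $X \notin U$ extends to a normal measure $U^X$ on $\kappa$ in $V[G*\GX]$; (ii) $U^X$ is unique; (iii) every normal measure on $\kappa$ in $V[G*\GX]$ is some such $U^X$.

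For (i), fix $U$ with $X \notin U$ and $j_U : V[G] \to M \isom \Ult(V[G],U)$, so $\kappa \notin j_U(X)$ and $H(\kappa^+)^M = H(\kappa^+)^{V[G]}$. Then $\pX \in M$ and $\pX$ is an initial segment of $j_U(\pX)$. The key point is that, since $\kappa \notin j_U(X)$, the coding prescriptions of $j_U(\pX)$ at coordinate $\kappa$ are the ones satisfied by the pointwise image $j_U``\GX$; using this, the closure of $M$ under $\kappa$-sequences, and the closure of the quotient $j_U(\pX)/\GX$ inside $M$, one runs the master-condition argument of \cite{OBN - Mitchell order I} to build in $V[G*\GX]$ an $M[\GX]$-generic filter $H$ for $j_U(\pX)/\GX$ with $j_U``\GX \subseteq \GX * H$. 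This lifts $j_U$ to $j_U^X : V[G*\GX] \to M[\GX * H]$, and $U^X := \{Y \subseteq \kappa : \kappa \in j_U^X(Y)\}$ satisfies $M[\GX * H] = \Ult(V[G*\GX],U^X)$ (as in Lemma \ref{Lemma - II - Extension Coincides with Measure Ultrapower}: every element of $M[\GX * H]$ is the value of a $j_U(\pX)$-name $j_U(f)(\kappa)$, $f \in V[G]$, hence equals $j_U^X(\nu \mapsto f(\nu)_{\GX})(\kappa)$) and $U^X \cap V[G] = U$ because $j_U^X \uhr V[G] = j_U$. Point (ii) follows from the uniqueness of the $\pX$-generic enforced by the coding: any extension $U'$ of $U$ has $j_{U'} \uhr V[G]$ equal to the ultrapower by $U$, and the coding — carried up the $<\kappa^+$-distributive tail $j_U(\pX)/\GX$ — pins down the $M$-generic that $U'$ induces on $j_U(\pX)$, forcing it to be $\GX * H$ and hence $U' = U^X$.

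For (iii), let $W$ be any normal measure on $\kappa$ in $V[G*\GX]$. I would rerun the proof of Proposition \ref{Proposition - II - Analysis of Normal Measures by P} with $\po * \pX$ replacing $\po$; this is legitimate since $\pX$ is $\sigma$-closed and preserves the core model, so that Schindler's presentation of $j_W \uhr V = \pi^T_{0,b}$ as a normal iteration, the use of $\sigma$-closure of $j_W(\po * \pX)$ to bound the number of applications of each extender (and to rule out an $\omega$-th image of $\theta$ becoming inaccessible), and the classification of extenders on $\kappa$ in $V = L[E]$ all apply unchanged. This gives $\alpha < l(\vec{F})$, $n < \omega$ with $j_W \uhr V = \pi_1 \circ j_{\alpha,n}$, $\cp(\pi_1) > \kappa$, and then — as at the end of that proof, recovering $G$ inside $M_W$ from the coding in $\po$ and $H(\kappa^+)^{M_W} = H(\kappa^+)^{V[G*\GX]}$ — one obtains $U_{\alpha,n} \subseteq W$, so $W$ extends $U_{\alpha,n}$. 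By the destruction property of $\pX$ this forces $X \notin U_{\alpha,n}$, and by (ii), $W = U_{\alpha,n}^X$. I expect the main difficulty to lie in (i), the master-condition lifting of $j_U$ through $j_U(\pX)$: this is precisely where $X \notin U$ (equivalently $\kappa \notin j_U(X)$) is used, via the coding at coordinate $\kappa$, and where the stationary-preservation properties of $\pX$ from \cite{OBN - Mitchell order I} do the real work — together with its counterpart, the destruction property, imported from Part I.
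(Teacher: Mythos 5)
Your proposal is correct and follows essentially the same route as the paper: lift $j_U$ through $j_U(\pX)$ using the triviality of stage $\kappa$ (since $\kappa\notin j_U(X)$) and the Friedman--Magidor master-condition/unique-generic machinery for (i)--(ii), then rerun Proposition~\ref{Proposition - II - Analysis of Normal Measures by P} over $\po*\pX$ for (iii), killing the case $X\in W$ via stationarity-preservation. One small note: in (i) you speak of the ``coding prescriptions of $j_U(\pX)$ at coordinate $\kappa$'' being ``satisfied by'' $j_U``\GX$, but the actual point used in the paper is that stage $\kappa$ of $j_U(\pX)$ is \emph{trivial} (no coding occurs there) precisely because $\kappa\notin j_U(X)$; and in (iii) you close by citing the uniqueness from (ii), whereas the paper verifies $U^X_{\alpha,n}\subseteq W$ directly by a name/condition argument --- these are presentational variants of the same step.
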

\begin{proof}
Suppose $U \in V[G]$ is a normal measure on $\kappa$ such that $X \not\in U$
and let $j : V[G] \to M[G_U] \cong \Ult(V[G],U)$ be its ultrapower embedding.
It is clear that $j(\pX)\uhr\kappa = \pX$, and stage $\kappa$ of
$j(\pX)$ is trivial as $\kappa \not\in j(X)$. 
The Friedman-Magidor iteration style implies that $j``\GX$ determines a unique generic filter $H^X \subset j(\pX)\setminus \kappa$ over $M[\GX]$. Setting $G^* = \GX * H^X$, we get that $G^* \subset j(\pX)$ is the unique generic filter
over $M$ for which $j``\GX \subset G^*$.  
It follows that
$j^* : V[G*\GX] \to M[G_U]$ is the unique extension of $j : V[G] \to M$ in $V[G*\GX]$ and that $U^X = \{Y \subset \kappa \mid \kappa \in j^*(Y)\}$ is the only normal measures extending  $U$ in $V[G*\GX]$.\\
Let $W$ be a normal measure on $\kappa$ in $V[G*\GX]$ and $j_W : V[G*\GX] \to M_W \cong \Ult(V[G*\GX],W)$. 
The embedding $j = j_W \uhr V : V \to M$ results from a normal iteration of $V$ given by a cofinal branch $b$ in an iteration tree $T$.
Furthermore, $M_W = M[G_W * \GX_W]$ where $G_W * \GX_W \subset j(\po * \pX)$ is generic over $M$.
It is easy to verify that the arguments 
Proposition \ref{Proposition - II - Analysis of Normal Measures by P} applies here as well. Note that the replacement of $\po$ with $\po * \pX$ does not affect the argument, as $\po*\pX$ (like $\po$) does not add new $\omega-$sequences and does not collapse inaccessible limits of measurable cardinals. 
It follows that there are $\alpha < l(\vec{F})$ and $n < \omega$ so that $U_{\alpha,n} \subset W$. \\
We claim that $X \not\in W$. Otherwise $\kappa \in j(X)$, so stage $\kappa$ in $j(\pX)$  is $\Code^*(\kappa)$.
Let $C \in M[G_W * \GX_W]$ be the $\Code^*(\kappa)$ generic club determined from $\GX_W$. We have that $C \cap X^\kappa_0 = \emptyset$ and that $C$ is closed unbounded in $V[G*\GX]$ (as $M[G_W * \GX_W] = M_W$ is closed under $\kappa-$sequences).
Thus $X^\kappa_0$ is nonstationary in $V[G*\GX]$, but this is impossible since $\pX$ preserves all stationary subset of $\kappa^+$.
We conclude that $W \cap V[G] = U_{\alpha,n}$ and that $X \not\in U_{\alpha,n}$.
Let us verify $U^X_{\alpha,n} \subseteq W$. Let $Y = (\name{Y})_{\GX}$ be a set in $U^X_{\alpha,n}$.
Since $j_{U_{\alpha,n}}``\GX$ generates a $j_{U_{\alpha,n}}(\pX)$ generic filter
over $M_{U_{\alpha,n}} \cong \Ult(V[G],U_{\alpha,n})$, it follows that there is a condition $p \in \GX$ so that 
$j_{U_{\alpha,n}}(p) \force \can{\kappa} \in j_{U_{\alpha,n}}(\name{Y})$.
Let $Y' = \{\nu < \kappa \mid p \force \can{\nu} \in \name{Y}\}$. 
It follows that $\kappa \in j_{U_{\alpha,n}}(Y')$. We conclude that $Y' \in U_{\alpha,n}$, a thus $Y' \in W$. But $Y' \subset Y$ because $p \in \GX$, 
so $Y \in W$. It follows that $U^X_{\alpha,n} = W$.
\end{proof}

A simple inspection of the proof of Proposition \ref{Proposition - II - MO ordering by P} shows that the Mitchell order on the set of normal measures $U^X_{\alpha,n}$ in $V[G*\GX]$ inherit the $\mo$ structure from $V[G]$.  
\begin{corollary}\label{Corollary - II - Mo after final cut}
$\UX{\alpha'}{n'} \mo \UX{\alpha}{n}$ if and only if  $\alpha' < \alpha$ and
 $n' \geq n$. 
\end{corollary}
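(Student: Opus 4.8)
The plan is to repeat the proof of Proposition \ref{Proposition - II - MO ordering by P} almost verbatim, now working over $V[G*\GX]$ in place of $V[G]$ and with the iteration $\po * \pX$ in place of $\po$. The only facts about $\pX$ that this requires are already in hand: by Lemma \ref{Lemma - II - Cut Down Measure Preservation} every normal measure on $\kappa$ in $V[G*\GX]$ is of the form $\UX{\alpha}{n}$, its ultrapower is $M_{\alpha,n}[G_{\alpha,n} * \GX_{\alpha,n}]$, and $\po * \pX$ --- like $\po$ --- is $\sigma$-closed and collapses no inaccessible limit of measurable cardinals, so Schindler's description of ultrapower restrictions from \cite{Schindler - ICM} still applies.

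For the forward direction I would assume $\alpha' < \alpha$ and $n' \geq n$ and note that $F_{\alpha',n'} \in M_{\alpha,n}$, while $\po^{V} = \po^{M_{\alpha,n}}$ and $(\pX)^{V[G]} = (\pX)^{M_{\alpha,n}[G_{\alpha,n}]}$. Since $\UX{\alpha'}{n'}$ is computed from $F_{\alpha',n'}$, $V_{\kappa+1}$, $G$ and $\GX$, all of which lie in $\Ult(V[G*\GX],\UX{\alpha}{n}) = M_{\alpha,n}[G_{\alpha,n} * \GX_{\alpha,n}]$, it follows that $\UX{\alpha'}{n'} \mo \UX{\alpha}{n}$.

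For the reverse direction I would assume $\UX{\alpha'}{n'} \mo \UX{\alpha}{n}$, write $W = \UX{\alpha}{n}$ and $W' = \UX{\alpha'}{n'}$, and take the ultrapowers $j_W : V[G*\GX] \to M_W$, $j_{W'} : V[G*\GX] \to M'$. Exactly as in Proposition \ref{Proposition - II - Analysis of Normal Measures by P} and Lemma \ref{Lemma - II - Cut Down Measure Preservation}, one gets $\K(M_W) = M_{\alpha,n}$ and $\K(M') = M_{\alpha',n'}$; forming $i' : M_W \to \Ult(M_W,W')$ one has $\K(\Ult(M_W,W')) = M_{\alpha',n'}$ and $i'\uhr\kappa^+ = j_{W'}\uhr\kappa^+$, so $E^{M'}\uhr j_{W'}(\kappa)$ stays fixed throughout the comparison of $M_{\alpha,n}$ with $M_{\alpha',n'}$. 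From here the index arithmetic is the same as in Proposition \ref{Proposition - II - MO ordering by P}: if $n \neq n'$ the first disagreement sits at the index of $i_m(U)$ with $m = \min(n,n')$, which is below $\theta^{++} < j_{W'}(\kappa)$, forcing $n' \geq n$; with $k = n' - n$, the first $k$ steps of the coiteration are the $k$-fold ultrapower of $M_{\alpha,n}$ by $i_n(U)$, and then a disagreement at an extender of critical point $\kappa$ together with $\cp(i') = \kappa$ excludes $\alpha' \geq \alpha$, leaving $\alpha' < \alpha$ and $n' \geq n$.

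The only point not literally covered by the earlier arguments is that appending $\pX$ to $\po$ disturbs none of the dense-set and genericity manipulations used in Propositions \ref{Proposition - II - Analysis of Normal Measures by P} and \ref{Proposition - II - MO ordering by P}; this is immediate from the Friedman-Magidor style of $\pX$ and the relevant clauses of Lemma \ref{Lemma - II - Cut Down Measure Preservation} and Lemma \ref{Lemma - II - basic results on po}. I therefore expect no genuine obstacle: the corollary drops out of an inspection of the proof of Proposition \ref{Proposition - II - MO ordering by P}, which is why it is stated as a corollary rather than proved afresh.
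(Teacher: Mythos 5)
Your proposal is correct and matches the paper's intent exactly: the paper offers no separate argument for this corollary beyond the single sentence preceding it ("A simple inspection of the proof of Proposition \ref{Proposition - II - MO ordering by P} shows that the Mitchell order on the set of normal measures $U^X_{\alpha,n}$ in $V[G*\GX]$ inherit the $\mo$ structure from $V[G]$"), and the inspection you spell out --- the forward direction via membership of $F_{\alpha',n'}$, $G$, and $\GX$ in $M_{\alpha,n}[G_{\alpha,n} * \GX_{\alpha,n}]$, and the reverse direction by rerunning the coiteration argument with $\po*\pX$ in place of $\po$, using that $\po*\pX$ remains $\sigma$-closed and collapses no inaccessible limit of measurables so that Schindler's analysis and the index arithmetic go through unchanged --- is precisely what that sentence asks the reader to do.
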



\subsection{Applications}

We conclude this section with several applications, showing how to realize some new non-tame orders as $\mo(\kappa)$ in generic extensions of the form $V[G][\GX]$.

In part I (\cite{OBN - Mitchell order I}), we introduced a class of well founded order called tame orders,
and proved that every tame order $(S,<_S)$ of size at most $\kappa$, can be consistently realized as $\mo(\kappa)$.
An order $(S,<_S)$ is tame if and only if it does not contain two specific orders:
\begin{enumerate}
\item $(R_{2,2},<_{R_{2,2}})$ is an order on a set of four elements 
$R_{2,2} = \{x_0,x_1,y_0,y_1\}$, defined by 
$<_{R_{2,2}} = \{ ( x_0,y_0) , ( x_1,y_1 )\}$.

\item $(S_{\omega,2},<_{S_{\omega,2}})$ is an order on a disjoint union of two  countable sets
$S_{\omega,2} = \{x_n\}_{n < \omega} \uplus \{y_n \}_{n<\omega}$, defined by
$<_{S_{\omega,2}} = \{ (x_m, y_n) \mid m \geq n\}$. 
\end{enumerate}
Therefore $R_{2,2}$ and $S_{\omega,2}$ are the principal examples of orders which cannot be realized by the methods of Part I. Let us show that $R_{2,2}$ and $S_{\omega,2}$ can be realized in our new settings.\\

\textbf{First application - Realizing $S_{\omega,2}$}\\
Suppose that in $\len(\vec{F}) = 2$ in $V = L[E]$, i.e.,  $\vec{F} = \la F_0,F_1\ra$.
Propositions \ref{Proposition - II - Analysis of Normal Measures by P} and \ref{Proposition - II - MO ordering by P}
imply that the normal measures in $\kappa$ in $V[G]$ are given by
$\{ U_{\delta,n} \mid \delta < 2,  n < \omega\}$, where
$U_{\delta',m} \mo U_{\delta,n}$ if and only if  $\delta' = 0$, $\delta = 1$ and $m \geq n$. 
Therefore $\mo(\kappa)^{V[G]} \cong <_{S_{\omega,2}}$.

\begin{center}
\begin{tikzpicture}[xscale=0.8, yscale=0.37]

    \pgfmathtruncatemacro\mult{1} 
    
    \draw [step=1.0mm,thin,black]  node [] at (0,0) {$\bullet$};
    \draw [step=1.0mm,thin,black]  node [below] at (0,0) {$U_{0,0}$};
    
    \draw [step=1.0mm,thin,black]  node [] at (1,0) {$\bullet$};
    \draw [step=1.0mm,thin,black]  node [below] at (1,0) {$U_{0,1}$};
    
    \draw [step=1.0mm,thin,black]  node [] at (2,0) {$\bullet$};
    \draw [step=1.0mm,thin,black]  node [below] at (2,0) {$U_{0,2}$};
    
    \draw [step=1.0mm,thin,black]  node [] at (3,0) {$\dots\dots$};
    \draw [step=1.0mm,thin,black]  node [] at (4,0) {$\dots$};
    
    \draw [step=1.0mm,thin,black]  node [] at (5,0) {$\bullet$};
    \draw [step=1.0mm,thin,black]  node [below] at (5,0) {$U_{0,n}$};
    
        \draw [step=1.0mm,thin,black]  node [] at (6,0) {$\dots\dots$};
    
        \draw [step=1.0mm,thin,black]  node [] at (0,3) {$\bullet$};
    \draw [step=1.0mm,thin,black]  node [above] at (0,3) {$U_{1,0}$};
    
    \draw [step=1.0mm,thin,black]  node [] at (1,3) {$\bullet$};
    \draw [step=1.0mm,thin,black]  node [above] at (1,3) {$U_{1,1}$};
    
    \draw [step=1.0mm,thin,black]  node [] at (2,3) {$\bullet$};
    \draw [step=1.0mm,thin,black]  node [above] at (2,3) {$U_{1,2}$};
    
    \draw [step=1.0mm,thin,black]  node [] at (3,3) {$\dots\dots$};
    \draw [step=1.0mm,thin,black]  node [] at (4,3) {$\dots$};
    
    \draw [step=1.0mm,thin,black]  node [] at (5,3) {$\bullet$};
    \draw [step=1.0mm,thin,black]  node [above] at (5,3) {$U_{1,n}$};
    
   \draw [step=1.0mm,thin,black]  node [] at (6,3) {$\dots\dots$};

     \foreach \i in {0,...,6} {
        \draw [thin,black] (0,3) -- (\i,0) ;}

     \foreach \i in {1,...,6} {
        \draw [thin,red] (1,3) -- (\i,0) ;}
        
     \foreach \i in {2,...,6} {
        \draw [thin,blue] (2,3) -- (\i,0) ;}
        
     \foreach \i in {5,...,6} {
        \draw [thin,teal] (5,3) -- (\i,0) ;}

\end{tikzpicture}
\end{center}

\textbf{Second application - Realizing $R_{2,2}$}\\
Suppose that $l(\vec{F}) = 3$ in $V = L[E]$, i.e.  $\vec{F} = \la F_0,F_1,F_2\ra$.
The measures on $\kappa$ in $V[G]$ are given by 
in $\{U_{\delta,n} \mid \delta < 3,n < \omega\}$.
Let $S = \{U_{0,0},U_{1,0},U_{1,1},U_{2,1}\}$. 
The restriction of $\mo(\kappa)$ to $S$ includes the relations
$U_{0,0} \mo U_{1,0}$ and $U_{1,1} \mo U_{2,1}$,
therefore  $\mo(\kappa)^{V[G]}\uhr S \isom R_{2,2}$. 
Since there are only $\aleph_0$ many normal measures on $\kappa$
in $V[G]$, then the normal measures in $\{U_{\delta,n} \mid \delta < 3,n < \omega\}$
are separated by sets, and there is some $X \subset \kappa$ so that
$X \not\in U_{\delta,n}$ if and only if $U_{\delta,n} \in S$. 
By forcing with $\pX$ over $V[G]$ we get a model $V[G][\GX]$
in which  $\mo(\kappa)^{V[G][\GX]} \cong R_{2,2}$. 
\newcommand*{\zc}{0}%
 \newcommand*{\oc}{1}%
\begin{center}
\begin{tikzpicture}[xscale=0.5, yscale=0.6]
    \draw  node [] at (\zc,\zc) {$\bullet$};
      \draw  node [left] at (\zc,\zc) {\blue{$U_{0,0}$}};
      \draw  node [] at (\zc,\oc) {$\bullet$};
      \draw  node [left] at (\zc,\oc) {\blue{$U_{1,0}$}};
      \draw  node [] at (\oc,\zc) {$\bullet$};
      \draw  node [right] at (\oc,\zc) {\blue{$U_{1,1}$}};
      \draw  node [] at (\oc,\oc) {$\bullet$};
      \draw  node [right] at (\oc,\oc) {\blue{$U_{2,1}$}};
      \draw [thick,black] (\zc,\zc) -- (\zc,\oc);      
      \draw [thick,black] (\oc,\zc) -- (\oc,\oc);            
\end{tikzpicture}
\end{center}

\textbf{Third application - $o(\kappa) = \omega$ but no $\omega-$increasing sequence in $\mo$}\\
Suppoes that $l(\vec{F}) = \omega$, i.e., $\vec{F} = \la F_i \mid i < \omega\ra$.
The normal measures on $\kappa$ in a $\po-$generic extension $V[G]$, are
of the form $U_{i,n}$ where $i < \omega$ and $n < \omega$. 
We define blocks of normal measures in $V[G]$:
\[B_n= \{U_{i,n} \mid k_n \leq i \leq k_n+n \}, \quad k_n = \frac{n(n+1)}{2}\]

We get that for every $n < \omega$:
\begin{enumerate}
 \item $|B_n| = n+1$, and
 \item The last $i-$index in a measure $U_{i,n} \in B_n$ is equal
 to the minimal $i-$index of a measure in $B_{n+1}$.  
\end{enumerate}
Proposition \ref{Proposition - II - MO ordering by P} implies
that for every $k < \omega$, $B_k$ is linerily ordered by $\mo(\kappa)$, so $\mo(\kappa)^{V[G]} \uhr B_k$ is 
$(k+1)-$increasing sequence in $\mo$. Moreover two measures from different blocks are $\mo$ incomparable.
The normal measures in $V[G]$ are separated by sets, so there is a set $X \subset \kappa$ which separates the measures in $S = \bigcup_{k<\omega} B_k$ from the of the rest
of the normal measures on $\kappa$. Let $\GX \subset \pX$ be generic over $V[G]$. 
It follows that $\mo(\kappa)^{V[G][\GX]}$ is isomorphic to a disjoint union of 
linear orders on $(k+1)-$elements, for every $k < \omega$. 
In particular $o^{V[G][\GX]}(\kappa) = \omega$, but there is no $\omega-$increasing sequence in $\mo(\kappa)$.

\newcommand*{\xMin}{0}%
\newcommand*{\xMidS}{2}
\newcommand*{\xMidE}{4}
\newcommand*{\xMax}{6}

\newcommand*{\yMin}{0}
\newcommand*{\yMidS}{6}%
\newcommand*{\yMidE}{6}
\newcommand*{\yMidEn}{10}
\newcommand*{\yMax}{10.5}%

\begin{center}
\begin{tikzpicture}[xscale=0.9, yscale=0.35]
    \pgfmathtruncatemacro\mult{1}

    \foreach \i in {\xMin,...,\xMidS} {
        \draw [very thin,gray] (\mult*\i,\yMin) -- (\mult*\i,\yMax) ;
        \draw [very thin,gray] node [below] at (\mult*\i,\yMin) {$\scriptstyle\i$};
    }
    \foreach \i in {\yMin,...,\yMidS} {
        \draw [very thin,gray] (\mult*\xMin,\i) -- (\mult*\xMax,\i) ;
    }

      \draw [step=1.0mm,thin,black]  node [] at (0,0) {$\bullet$};
       \draw [step=1.0mm,thin,black]  node [right] at (-0.1,0) {\footnotesize \blue{$U_{0,0}$}};
       \draw [step=1.0mm,thin,black]  node [left] at (0,0) {\footnotesize \red{$B_0$}};
	                             
      \foreach \x in {1,...,2} {
	  \pgfmathtruncatemacro\jstart{\x*(\x+1)/2}
	  \pgfmathtruncatemacro\jend{\x*(\x+1)/2 + \x - 1}
	  
	           \draw [step=1.0mm,thin,black]  node [right] at (\mult*\x-0.1,\jstart) {\footnotesize \blue{\footnotesize $U_{\jstart,\x}$}};
	           
	  \foreach \j in {\jstart,...,\jend} {
		  \pgfmathtruncatemacro\jnext{\j+1}
		  \draw [ step=1.0mm,thin,black] (\mult*\x,\j) -- (\mult*\x,\jnext) node [] at (\mult*\x,\jnext) {$\bullet$};
		  \draw [step=1.0mm,thin,black]  node [right] at (\mult*\x-0.1,\jnext) {\footnotesize\blue{$U_{\jnext,\x}$}};
	  }
	      \draw [step=1.0mm,thin,black]  node [] at (\mult*\x,\jstart) {$\bullet$};

	    \pgfmathtruncatemacro\jlast{\x*(\x+1)/2 + \x}  
	    \pgfmathtruncatemacro\xnext{\x+1}  
	    \draw [thin, black ,xshift=-2pt,]
		  (\mult*\x,\jstart) -- (\mult*\x,\jlast) node [black,midway,xshift=-0.2cm, yshift= 0.1cm]  
		  {\footnotesize \red{$B_{\x}$}};
	    }

      \draw [very thin,gray] node [below] at (\xMidE,\yMin) {$\scriptstyle{n}$};
      \draw [very thin,gray] node [below] at (\xMidE-1.3,\yMin) {${}_{\dots}$};
            \draw [very thin,gray] node [below] at (\xMidE+0.9,\yMin) {${}_{\dots\dots\dots}$};
      \draw [ step=1.0mm,thin,black] (\mult*\xMidE,\yMidE) -- (\mult*\xMidE,\yMidEn) ;
       \draw [step=1.0mm,thin,black]  node [] at (\mult*\xMidE,\yMidE) {$\bullet$};
              \draw [step=1.0mm,thin,black]  node [right] at (\mult*\xMidE,\yMidE) {\blue{\footnotesize $U_{k_{n},n}$}};
        \draw [step=1.0mm,thin,black]  node [] at (\mult*\xMidE,\yMidE ) {$\bullet$};
        \draw [step=1.0mm,thin,black]  node [right] at (\mult*\xMidE,\yMidE +1 ) {\blue{\footnotesize $U_{k_{n}+1,n}$}};
            \draw   node [] at (\mult*\xMidE,\yMidE + 1) {$\bf{\vdots}$};
                \draw   node [] at (\mult*\xMidE,\yMidE + 2) {$\bf{\vdots}$};
            \draw   node [] at (\mult*\xMidE,\yMidE + 3) {$\bf{\vdots}$};
                 \draw   node [] at (\mult*\xMidE,\yMidE + 4) {$\bf{\vdots}$};
            
            \draw   node [] at (\mult*\xMidE,\yMidE + 4) {$\bullet$};
             \draw [step=1.0mm,thin,black]  node [right] at (\mult*\xMidE,\yMidE + 4) {\blue{\footnotesize $U_{k_{n}+n},n$}};
            \draw [thin, black, xshift=-2pt,]
		  (\mult*\xMidE,\yMidE) -- (\mult*\xMidE,\yMidEn) node [black,midway,xshift=-0.3cm, yshift= 0.2cm] 
		  {\footnotesize $\red{B_n}$};
\end{tikzpicture}
\end{center}


\section{The Main Theorem}\label{II Section - maintheorem}
This section is devoted to proving the main Theorem (\ref{MO II - main theorem}). 
In subsection \ref{subSection - II - Realizing},
we first introduce a family of orders $(R^*_{\rho,\lambda}, <_{R^*_{\rho,\lambda}})$
$\rho,\lambda \in \On$, and show that every well-founded order $(S,<_S)$ embeds into $R^*_{\rho,\lambda}$ for $\rho = \rank(S,<_S)$ and $\lambda = |S|$. 
We then proceed to describe the revised ground model assumptions of $V = L[E]$ and introduce extenders $F_{\alpha,c}$ which are used to realize $R^*_{\rho,\lambda}$ using $\mo$. 
The forcing we apply to $V$ has two main goals: To generate extensions of $F_{\alpha,c}$ (in a generic extension) which are $\kappa-$complete, and to collapse their generators to $\kappa^+$, thus introducing equivalent normal measures on $\kappa$. \\
The generic extension $V^2$ of $V$ is obtained by a poset which includes three components: $\pzero$. $\pone$, and $\ptwo$.
\begin{enumerate}
\item $\pzero$ is the Friedman-Magidor forcing, splitting the measures and extenders on $\kappa$ in $V$ into $\lambda$ $\mo-$equivalent extensions. Much like the use of the Friedman-Magidor forcing in Part I (\cite{OBN - Mitchell order I}),
 the purpose of $\pzero$ is to allow simultaneously dealing with many different extenders in a single forcing. In subsection \ref{II subsection - P0} we describe certain extensions of the extenders $F_{\alpha,c} \in V$ in a $\pzero$ generic extension $V^0$, and introduce the key iterations and embeddings used in the subsequent extensions.

\item $\pone$ is a Magidor iteration of one-point Priky forcings.
The purpose of this poset is to introduce extensions of the extenders $F_{\alpha,c}$ which are $\kappa-$complete (i.e., extension of the $V-$iterated ultrapower by $F_{\alpha,c}$ which are closed under $\kappa-$sequences).
In subsection \ref{II subsection - P1} we describe a $\pone$ generic extension $V^1$. We then introduce and further investigate a collection of $\kappa$-complete extenders in $V^1$. 
\item $\ptwo$ is a collapse and coding iteration, similar to the collapsing and coding poset $\po$ introduced in the previous section. In subsection \ref{II subsection - P2} we describe the normal measures on $\kappa$ in $V^2 = (V^1)^{\ptwo}$  and $\mo(\kappa)$ in $V^2$, which embeds $R^*_{\rho,\lambda}$. 
\end{enumerate}
Finally, in subsection \ref{II subsection - PX} we apply a final cut extension
to form a model in which $\mo(\kappa) \cong (S,<_S)$.

\subsection{The Orders $(R^*_{\rho,\lambda},<_{R^*_{\rho,\lambda}})$}\label{subSection - II - Realizing}

\noindent For every ordinal $\lambda$, and  $c,c' : \lambda \to \On$, we write $c \geq c'$ when
$c(i) \geq c'(i)$ for every $i < \lambda$.

\begin{definition}[$R^*_{\rho,\lambda},<_{R^*_{\rho,\lambda}}$]${}$\\
For ordinals $\rho,\lambda$ let 
$R^*_{\alpha,\lambda} = \rho \times {}^\lambda 2$.
Let $<_{R^*_{\rho,\lambda}}$ be a order relation on $R^*_{\rho,\lambda}$ defined by
$(\rho_0,c_0) <_{R^*_{\alpha,\lambda}}(\rho_1,c_1)$
if and only if $\rho_0 < \rho_1$ and $c_0 \geq c_1$. 
\end{definition}

\begin{lemma}\label{II Lemma - embedding S in R^*}
Let $(S,<_S)$ be a well-founded order so that $|S| \leq \lambda$, and
$\rank(S,<_S) = \rho$, then $(S,<_S)$ can be embedded in $(R^*_{\rho,\lambda},<_{R^*_{\rho,\lambda}})$
\end{lemma}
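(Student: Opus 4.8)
The plan is to construct the order-embedding coordinatewise: the first coordinate of $e(s)$ records the well-founded rank of $s$, and the second coordinate encodes $s$ by (an isomorphic copy of) its weak upward cone in $(S,<_S)$. The point that makes this work is the elementary observation that on ${}^\lambda 2$ the relation ``$c_0 \ge c_1$'' is nothing but reverse inclusion of supports. So first I would record this reformulation: writing $\supp(c) = \{ i < \lambda \mid c(i)=1 \}$ for $c \in {}^\lambda 2$, we have $c_0 \ge c_1$ iff $\supp(c_0) \supseteq \supp(c_1)$, hence
\[
(\rho_0,c_0) <_{R^*_{\rho,\lambda}} (\rho_1,c_1) \iff \rho_0 < \rho_1 \ \text{ and }\ \supp(c_0) \supseteq \supp(c_1).
\]
Thus it suffices to produce an injection $e \colon S \to \rho \times \power(\lambda)$, $e(s) = (\rho_s, B_s)$, with the property that $s <_S t$ if and only if $\rho_s < \rho_t$ and $B_s \supseteq B_t$.

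Next I would define $e$. Let $\rho_s$ be the rank of $s$ in the well-founded relation $<_S$; by definition of $\rho = \rank(S,<_S)$ we have $\rho_s < \rho$ for every $s$, and $s <_S t$ implies $\rho_s < \rho_t$. Since $|S| \le \lambda$, fix an injection $\iota \colon S \to \lambda$. For $s \in S$ set $A_s = \{ u \in S \mid s \le_S u \}$, the weak upward cone of $s$ (note $s \in A_s$), and let $B_s = \iota[A_s] \subseteq \lambda$. Finally put $e(s) = (\rho_s, \chi_{B_s}) \in R^*_{\rho,\lambda}$, where $\chi_{B_s} \in {}^\lambda 2$ is the characteristic function of $B_s$.

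Then I would verify the three requirements. For monotonicity in one direction: if $s <_S t$ then $\rho_s < \rho_t$, and for any $u \in A_t$ we have $t \le_S u$, so $s <_S u$ by transitivity of $<_S$, whence $u \in A_s$; therefore $A_t \subseteq A_s$, so $B_t \subseteq B_s$, and $e(s) <_{R^*_{\rho,\lambda}} e(t)$. For the converse, if $e(s) <_{R^*_{\rho,\lambda}} e(t)$ then $B_s \supseteq B_t \ni \iota(t)$, so $\iota(t) \in \iota[A_s]$, and injectivity of $\iota$ gives $t \in A_s$, i.e.\ $s \le_S t$; since also $\rho_s < \rho_t$ we get $s \ne t$, hence $s <_S t$. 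For injectivity of $e$: if $e(s) = e(t)$ then $B_s = B_t$, so $A_s = A_t$ (again by injectivity of $\iota$), which yields $s \le_S t$ and $t \le_S s$; as $<_S$ is a well-founded strict partial order it has no $2$-cycles, so $s = t$.

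There is no serious obstacle here; this is a combinatorial packaging lemma, and the only thing to get right is the encoding of the second coordinate. It is worth noting that neither coordinate suffices alone: the rank coordinate cannot separate two non-isomorphic orders of the same height, while the upward-cone coordinate by itself only recovers the reflexive order $\le_S$ — it is precisely the strict inequality of ranks that distinguishes $<_S$ from equality. Well-foundedness is used only to provide the ambient ordinal $\rho$ and the rank function, and to rule out cycles (which gives both the strict rank inequality along $<_S$ and the antisymmetry needed for injectivity of $e$).
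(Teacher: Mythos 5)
Your proof is correct and takes essentially the same approach as the paper: the first coordinate records the well-founded rank, the second encodes the weak upward cone of $s$ via a characteristic function on $\lambda$, and $<_{R^*_{\rho,\lambda}}$-comparability is unwound into rank inequality plus reverse inclusion of cones. The only cosmetic difference is that you fix an injection $S \to \lambda$ where the paper uses a bijection $\lambda \to S$, and you spell out the injectivity check that the paper leaves implicit.
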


\begin{proof}
Suppose that $(S,<_S)$ is a well founded order. For every $x \in S$
let $\u(x) = \{ y \in S \mid x <_S y \text{ or } y = x\}$,
and define a function $c'_x: S \to 2$ to be 
the characteristic function of $\u(x)$ in $S$,
i.e., for every $y \in S$,
\[
c'_x(y) = 
\begin{cases}
 1  &\mbox{if } y \in \u(x) \\
 0 &\mbox{otherwise. }
\end{cases}\]

Let $x,y$ be a distinct elements in $S$.
Note that if $x<_S y$ then $\u(y)  \subset \u(x)$ hence $c'_x \geq c'_y$, 
and if $x \not<_S y$ the $y \in \u(y) \setminus \u(x)$ so
$c'_x \not\geq c'_y$. \\
Let $\sigma : \lambda \to S$ be a bijection and define for every
$x \in S$, $c_x = c'_x \circ \sigma : \lambda \to 2$.
It follows that for every $x\neq y$ in $S$,
$c_x \geq c_y$ if and only if $x <_S y$.\\
We conclude that the function $\pi : S \to R^*_{\rho,\lambda}$ defined
by $\pi(x) = \la \rank_S(x), c_x\ra$, is an embedding of
$(S,<_S)$ into $(R^*_{\rho,\lambda},<_{R^*_{\rho,\lambda}})$.
\end{proof}

We conclude that the order $(S,<_S)$ is isomorphic to a restriction of $(R^*_{\rho,\lambda},<_{R^*_{\rho,\lambda}})$ to a subset of the domain.\\

\textbf{Let us assume from this point on that $S \subset R^*_{\rho,\lambda}$ and 
$<_S = <_{R^*_{\rho,\lambda}}\uhr S$.}\\

\begin{definition}[$S' = \la x_i \mid i < \lambda\ra$]\label{MO II - Definition - S'}${}$\\
Define $S' \subset \power(\lambda)$,
\[S' = \{x \subset \lambda \mid \text{ there is } (\alpha,c) \in S \text{ such that } x = c^{-1}(\{1\})\}.\]

\textbf{We fix a surjective enumeration $\la x_i \mid i < \lambda\ra$ of $S'$.}
\end{definition}

\noindent Suppose that  $V = L[E]$ is a core model which satisfy the following requirements:
\begin{enumerate}
\item There are $\lambda$ measurable cardinals above $\kappa$.
Let $\vec{\theta} = \la \theta_i \mid i < \lambda\ra$ is an increasing enumeration 
of the first and let $\theta = \bigcup_{i < \lambda}\theta_i^+$.
\item There is a $\mo-$increasing sequence $\vec{F} = \la F_\alpha \mid \alpha < l(\vec{F})\ra$, $l(\vec{F}) < \theta_0$, of $(\kappa,\theta^{+})-$extenders. 

\item 
Each $F_\alpha$ is $(\theta+1)-$strong.

\item $\vec{F}$ consists of all full $(\kappa,\theta^{+})-$extenders in $E$. 

\item
There are no stronger extenders on $\kappa$ in $E$ ($o(\kappa) = \theta^{+} + l(\vec{F})$).
\item There are no extenders $F \in E$ so that $\cp(F) < \kappa$ and $\nu(F) \geq \kappa$.
\end{enumerate}

For every $i < \lambda$, let $U_{\theta_i}$ be the unique normal measure
on $\theta_i$ in $V$.

\begin{definition}[$i_c$, $N_c$, $F_{\alpha,c}$, $M_{\alpha,c}$, $j_{\alpha,c}$]${}$
\begin{enumerate}
\item 
For any $c : \lambda \to 2$
let $i_{c} : V \to N_{c}$ be the elementary embedding formed by a linear iteration of the measures $U_{\theta_i}$ for which  $c(i) = 1$. 
We refer to this iteration as the $c-$derived iteration. 
\\
Therefore every set $x \in N_c$ is of the form $x = i_c(f)(\theta_{i_0},\dots,\theta_{i_{n-1}})$, where
$\{i_0,\dots,i_n\}$ is a finite subset of $c^{-1}(\{1\})$ and $f : \prod_{k<n}\theta_{i_k} \to V$ is a function in $V$. 

\item For every $i < \lambda$, let $\theta^{c(i)}_i = i_c(\theta_i)$.
Therefore $\theta^{c(i)}_i$ is the $i-$th measurable cardinal above
$\kappa$ in $N_c$. 

\item 
For every $\alpha < l(\vec{F})$ let $F_{\alpha,c} = i_c(F_\alpha)$. 
$F_{\alpha,c}$ is a $(\kappa,\theta^+)-$extender on $\kappa$ in $V$.

\item 
Note that $\la U_{\theta_{i}} \mid i < \lambda\ra$ belongs to $M_{\alpha}$.
Let $i_{\alpha,c} : M_{F_{\alpha}} \to M_{\alpha,c}$ 
be embedding which results from the  $c-$derived iteration of $M_{\alpha}$.

\item 
Let $j_{F_{\alpha,c}} : V \to M_{F_{\alpha,c}} \cong \Ult(V,F_{\alpha,c})$ denote the iterated ultrapower of $V$ by $F_{\alpha,c}$.
\end{enumerate}
\end{definition}

\begin{lemma}\label{Lemma - II - F_omega derived from i_omega circ j_F}
 For each $\alpha < l(\vec{F})$ and $c : \lambda \to 2$, $F_{\alpha,c}$ 
 is the $(\kappa,\theta^+)-$extender derived from the embedding $i_{\alpha,c} \circ j_{\alpha} : V \to M_{\alpha,c}$. Therefore $M_{F_{\alpha,c}} = M_{\alpha,c}$ and $j_{F_{\alpha,c}} = i_{\alpha,c} \circ j_{\alpha}$.
\end{lemma}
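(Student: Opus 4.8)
The plan is to mimic the proof of the lemma in Section~\ref{Section - II - First Overlapping} (the one establishing that $F_{\alpha,n}$ is the extender derived from $i_n^{M_{F_\alpha}} \circ j_{F_\alpha}$), replacing the finite iteration $i_n$ by the $c$-derived iteration $i_c$. First I would fix $\alpha < l(\vec F)$ and $c : \lambda \to 2$, and set $j = i_{\alpha,c} \circ j_{\alpha} : V \to M_{\alpha,c}$; I must show that for every $\gamma < \theta^+$ and every $X \subset \kappa$ one has $X \in F_{\alpha,c}(\gamma)$ if and only if $\gamma \in j(X)$, where $F_{\alpha,c}(\gamma)$ is the $\gamma$-th measure of $F_{\alpha,c} = i_c(F_\alpha)$. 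Since the generators of $j_\alpha$ lie in $[\kappa,\theta^+)$ and $\kappa$, $\theta^+$ are fixed points of both $i_c$ and $i_{\alpha,c}$ (the iterations only move cardinals $\geq \theta_0 > l(\vec F)$, hence $> \kappa$, and the $\theta_i$ and $\theta$ itself are arranged to be fixed), the map $j$ has all its generators in $[\kappa,\theta^+)$ as well, so $j$ really is (the ultrapower map of) a $(\kappa,\theta^+)$-extender; it then suffices to identify its measures with those of $F_{\alpha,c}$.

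The key computation runs on two sides, exactly as in the earlier lemma. On the one hand, viewing everything inside $N_c \cong$ (the direct limit of the) $c$-derived iteration of $V$: $\gamma < \theta^+$ is represented as $\gamma = i_c(g)(\theta_{i_0},\dots,\theta_{i_{n-1}})$ for some finite $\{i_0,\dots,i_{n-1}\} \subseteq c^{-1}(\{1\})$ and some $g : \prod_{k<n}\theta_{i_k} \to \theta^+$ in $V$; since $F_{\alpha,c} = i_c(F_\alpha)$ and $X = i_c(X)$ (as $X \subseteq \kappa$ and $\kappa$ is a fixed point), {\L}o{\'s}'s theorem for the iteration gives
\[
X \in F_{\alpha,c}(\gamma) \iff \{\, (\nu_0,\dots,\nu_{n-1}) \mid X \in F_\alpha(g(\nu_0,\dots,\nu_{n-1})) \,\} \in \bigotimes_{k<n} U_{\theta_{i_k}}.
\]
On the other hand, since each $F_\alpha$ is $(\theta+1)$-strong, every function $\prod_{k<n}\theta_{i_k} \to \theta^+$ in $V$ lies in $V_{\theta+1} \subseteq M_{F_\alpha}$, and the sequence $\la U_{\theta_i} \mid i<\lambda\ra$ lies in $M_\alpha$; hence the same ordinal $\gamma$ is represented in the $c$-derived iteration of $M_{F_\alpha}$ (whose direct limit is $M_{\alpha,c}$) by the very same function $g$, and applying {\L}o{\'s}'s theorem along that iteration,
\[
\gamma \in j(X) = i_{\alpha,c}(j_\alpha(X)) \iff \{\, (\nu_0,\dots,\nu_{n-1}) \mid g(\nu_0,\dots,\nu_{n-1}) \in j_\alpha(X) \,\} \in \bigotimes_{k<n} U_{\theta_{i_k}}.
\]
Since $g(\vec\nu) \in j_\alpha(X) \iff X \in F_\alpha(g(\vec\nu))$ by definition of the extender $F_\alpha$, the two right-hand sides coincide, giving $X \in F_{\alpha,c}(\gamma) \iff \gamma \in j(X)$. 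This proves $F_{\alpha,c}$ is the extender derived from $j$, and the ``therefore'' clause $M_{F_{\alpha,c}} = M_{\alpha,c}$, $j_{F_{\alpha,c}} = i_{\alpha,c}\circ j_\alpha$ follows because $j$ has all generators below $\theta^+$, so the ultrapower by $F_{\alpha,c}$ is exactly the direct limit of $j$.

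The one point that needs care --- and which I expect to be the main obstacle --- is the bookkeeping for the $c$-derived iteration when $c^{-1}(\{1\})$ is infinite: one must be sure that each $\gamma < \theta^+$ and each relevant function is captured at a \emph{finite} stage of the iteration (so that the finite-support {\L}o{\'s} argument above applies), and that the measures $U_{\theta_i}$ with $c(i)=1$ are iterated in a way compatible on the $V$-side and the $M_{F_\alpha}$-side. Both are handled by the same observation used throughout: $\theta^+$ and all the $\theta_i$, together with $\la U_{\theta_i}\mid i<\lambda\ra$, are fixed by (respectively contained in the models of) the iteration, and $M_\alpha \supseteq V_{\theta+1}$ makes the two iterations literally agree on the parameters in play. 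Modulo this, the argument is identical to the $n<\omega$ case and I would simply say ``the proof is as in Lemma~[ref], using the $c$-derived iteration in place of $i_n$, and noting that each ordinal below $\theta^+$ and each function needed is represented at a finite stage of the iteration.''
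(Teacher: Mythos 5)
Your proof is correct and follows essentially the same route as the paper's: represent $\gamma < \theta^+$ using a finite set of indices and a function $g$ in $V$, apply {\L}o{\'s}'s theorem on both sides (the $c$-derived iteration of $V$ and of $M_{F_\alpha}$), and use $V_{\theta+1} \subseteq M_{F_\alpha}$ to see that the same function represents $\gamma$ on both sides, so the two Łoś conditions coincide because $f(\vec\nu) \in j_\alpha(X) \iff X \in F_\alpha(f(\vec\nu))$. The bookkeeping concern you flag about infinite $c^{-1}(\{1\})$ is handled exactly as you suggest and exactly as in the paper, by observing that every element of the direct limit appears at a finite stage.
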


\begin{proof}
We need to verify that for every $\gamma < \theta^+$ and $Y \subset \kappa$, 
$Y \in F_{\alpha,c}(\gamma)$ if and only if
$\gamma \in i_{\alpha,c} \circ j_{\alpha}(Y)$.\\
We represent $\gamma$ as an element of the $c-$derived iteration if $M_{\alpha}$. Let  $i_0,\dots,i_{n-1} < \lambda$ be a finite sequence of indices,
and let $f : \prod_{k<n}\theta_{i_k} \to \theta^+$, 
so that $\gamma = i_{\alpha,c}(\theta_{i_0},\dots,\theta_{i_{n-1}})$.\\
Since $V_{\theta+1} \subset M_{\alpha}$ we get that 
$i_c \uhr V_{\theta+1} = i_c^{M_{\alpha}}\uhr V_{\theta+1}$.
In particular $\gamma = i_c(\theta_{i_0},\dots,\theta_{i_{n-1}})$.
As $F_{\alpha,c} = i_c(F_\alpha)$ we have that $Y \in F_{\alpha,c}(\gamma)$ if and only if
\begin{equation}\label{eq - 1}
\{\vec{\nu} \in \prod_{k<n}\theta_{i_k} \mid Y \in F_\alpha(f(\vec{\nu}))\} \in \prod_{i<n}U_{\theta_{i_k}}.
\end{equation}
Note that for every $\vec{\nu} \in \prod_{k<n}\theta_{i_k}$, $Y \in F_\alpha(f(\vec{\nu}))$ if and only
if $f(\vec{\nu})\in j_{\alpha}(Y)$. Therefore \ref{eq - 1} is equivalent to 
\begin{equation}\label{eq - 2}
\{\vec{\nu} \in \prod_{k<n}\theta_{i_k} \mid f(\vec{\nu})\in j_{\alpha}(Y)\} \in \prod_{k<n}U_{\theta_{i_k}}.
\end{equation}
The claim follows since \ref{eq - 2} can be seen as a statement of $M_{\alpha}$ which is equivalent to
$i^{M_F}_c(f(\theta_{i_0},\dots,\theta_{i_{n-1}})\in i_c\circ j_{F_\alpha}(Y)$. 
\end{proof}

\subsection{The Poset $\pzero$}\label{II subsection - P0}

\begin{definition}[$\Omega'$ and $\pzero$]${}$\\
Let $\Omega'$ denote the set of $\nu < \kappa$ which are inaccessible limit of measurable cardinals.\\

The poset $\pzero = \pzero_{\kappa+1} = \la \pzero_\nu, \qzero_\nu \mid \nu \leq \kappa\ra$ is a Friedman-Magidor poset (\cite{Friedman-Magidor - Normal Measures}) for splitting the $(\kappa,\theta^+)-$extenders in $V$ 
into $\lambda$ $\mo-$equivalent extensions. 
$\pzero$ is a nonstationary support iteration so that $\qo^0_\nu$ is not trivial for $\nu \in \Omega' \cup \{\kappa\}$,
and $\force_{\po^0_\nu} \qo^0_\nu = \Sacks_{\lambda}(\nu)*\Code(\nu)$, where
$\Sacks_{\lambda}(\nu)$ is a Sacks forcing with $\rho_\lambda(\nu)-$splittings, and $\Code(\nu)$  codes the generic Sacks function $s_\nu : \nu \to \lambda(\nu)$ and itself. Here 
\[ \lambda(\nu) = 
\begin{cases}
 \lambda  &\mbox{if } \lambda <  \kappa  \\
 \nu &\mbox{if } \lambda = \kappa
\end{cases}\] 
\end{definition}

Let $\Gzero \subset \pzero$ be a generic filter. We denote $V[\Gzero]$ by $V^0$. 
According to the analysis of Friedman and Magidor, for every $\alpha < l(\vec{F})$, the ultrapower embedding $j_{\alpha} : V \to M_{\alpha}\cong \Ult(V,{\alpha})$ has exactly $\lambda-$different extensions of the form $j^0_{\alpha,i} : V[\Gzero] \to  M_{\alpha}[\Gzero_{\alpha,i}]$, satisfying the following properties:
\begin{enumerate}
 \item $j^0_{\alpha,i}\uhr V = j_{\alpha}$,
 \item $j^0_{\alpha,i}(\Gzero) = \Gzero_{\alpha,i}$,
 \item $s^{\Gzero_{\alpha,i}}_{j_{\alpha}(\kappa)}(\kappa) = i$, 
	where $s^{\Gzero_{\alpha,i}}_{j_{\alpha}(\kappa)} : j_{\alpha}(\kappa) \to j_{\alpha}(\lambda)$ is the $\Gzero_{\alpha,i}-$generically derived Sacks function.
 \item  $V^0_{\theta+1} \subset M_{\alpha}[\Gzero_{\alpha,i}]$.
\end{enumerate}

\begin{definition}[$M^0_{\alpha,x}$, $j^0_{\alpha,x}$, $F^0_{\alpha,x}$, $\Omega'_x$]\label{Definition - Omega'x}${}$\\
Let $x \in S'$ and suppose that $\bf{x = x_i}$ in the enumeration of $S'$ introduced in
Definition \ref{MO II - Definition - S'}.
\begin{enumerate}
\item 
Let us denote $M_{\alpha}[\Gzero_{\alpha,i}]$ by $M^0_{\alpha,x}$, and
$j^0_{\alpha,i} : V[\Gzero] \to  M_{\alpha}[\Gzero_{\alpha,i}]$ by $j^0_{\alpha,x} : V[\Gzero] \to M^0_{\alpha,x}$. 

\item 
Let $F^0_{\alpha,x}$ denote the $(\kappa,\theta^+)-$extender in $V[\Gzero]$
derived from $j^0_{\alpha,x} : V^0  \to M^0_{\alpha,x}$.

\item 
Let $\Omega'_x = \{\nu \in \Omega' \mid s_{\nu} = s_\kappa\uhr\nu \text{ and } s_\kappa(\nu) = i\}$.
\end{enumerate}
\end{definition}

It easily follows that for every $x,y \in S'$,
$\Omega'_x \in F^0_{\alpha,y}(\kappa)$ if and only if $x = y$. \\
As $|\pzero| < \theta_0$ we have that for every $i  < \lambda$,
the normal measure $U_{\theta_i}$ in $V$ generates a unique normal measure on $\theta_i$ in $V^0 = V[\Gzero]$. Let us denote this extension by $U^0_{\theta_i}$. Note that $U^0_{\theta_i} \in M^0_{\alpha,x}$ for every $\alpha < \vec{F}$ and $x \in S'$.

\begin{definition}[$i^0_{\alpha,x,c}$, $N^0_{\alpha,x,c}$, $F^0_{\alpha,x,c}$]${}$
\begin{enumerate}
\item  Suppose that $\alpha < l(\vec{F})$, $x \in S'$, and $c : \lambda \to \omega$.
We define a linear iteration
\begin{equation}\label{Equation - II - F0 c gamma derived from a V0 iteration}
\la N^0_{\alpha,x,c,j}, i^0_{\alpha,x,c,j,j'} \mid j < j' \leq \lambda\ra 
\end{equation}
associated with $\alpha$, $x$, and $c$:
\begin{itemize}
\item $N^0_{\alpha,x,c,0} = M^0_{\alpha,x}$.
\item for every $j < \lambda$,
\[ i^0_{\alpha,x,c,j,j+1} : N^0_{\alpha,x,c,j} \to N^0_{\alpha,x,c,j+1} \cong \Ult^{\c(j)}(N^0_{\alpha,x,c,j}, U^0_{\theta_j})\]
is the $c(j)-$th iterated ultrapower embedding of $N^0_{\alpha,x,c,j}$ by $U^0_{\theta_j} = i^0_{\alpha,x,c,0,j}(U^0_{\theta_j})$.
\item 
for every limit ordinal $j' \leq \lambda$, 
$N^0_{\alpha,x,c,j'}$ is the direct limit of the iteration up to $j'$,
and $i^0_{\alpha,x,c,j,j'}$, $j < j'$, are the limit embeddings.

\item We denote $N^0_{\alpha,x,c,\lambda}$ by $N^0_{\alpha,x,c}$, 
and $i^0_{\alpha,x,c,0,\lambda}$ by $i^0_{\alpha,x,c} : M^0_{\alpha,x} \to N^0_{\alpha,x,c}$.
\end{itemize}
We refer to this iteration as the $c-$derived iteration of $M^0_{\alpha,x}$, and to $i^0_{\alpha,x,c} : M^0_{\alpha,x} \to N^0_{\alpha,x,c}$ as the $c-$derived embedding of $M^0_{\alpha,x}$.

\item Let $i^0_c : V^0 \to N^0_c$ be similarly defined $c-$derived iteration of $V^0$.

\item Let $F^0_{\alpha,c,x} = i^0_c(F_{\alpha,x})$. 
$F^0_{\alpha,c,x}$ is a $(\kappa,\theta^+)-$extender on $\kappa$ in $V^0$ as 
$\cp(i^0_c) > \kappa$  and $\theta^+ = i^0_c(\theta^+)$.
Moreover, the fact $V^0_{\theta+1} \subset M^0_{\alpha,x}$ implies that
$i^0_c\uhr \theta^+ = i^0_{\alpha,x,c} \uhr \theta^+$.
\end{enumerate}
\end{definition}

The proof of Lemma \ref{Lemma - II - F_omega derived from i_omega circ j_F}
can be easily modified to show the following
\begin{corollary}
For every $\alpha < l(\vec{F})$, $x \in S'$, and  $c : \lambda \to \omega$,  $F^0_{\alpha,x,c}$ is the $(\kappa,\theta^+)-$extender derived from $i^0_{\alpha,x,c} \circ j^0_{\alpha,x} : V^0 \to M^0_{\alpha,x,c}$, and therefore $M^0_{\alpha,x,c} \cong \Ult(V^0,F^0_{\alpha,x,c})$.
\end{corollary}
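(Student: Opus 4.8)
The plan is to repeat, modulo the obvious changes, the proof of Lemma~\ref{Lemma - II - F_omega derived from i_omega circ j_F}, but carried out over $V^0$ in place of $V$: here $M^0_{\alpha,x}$ plays the role of $M_\alpha$, the measures $U^0_{\theta_j}$ play the role of the $U_{\theta_j}$, and $i^0_{\alpha,x,c}$, $i^0_c$ play the roles of $i_{\alpha,c}$, $i_c$. Concretely, I would fix $\gamma<\theta^+$ and $Y\subseteq\kappa$ with $Y\in V^0$ and show that $Y\in F^0_{\alpha,x,c}(\gamma)$ if and only if $\gamma\in i^0_{\alpha,x,c}\circ j^0_{\alpha,x}(Y)$. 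Since all generators involved lie below $\theta^+$, this single equivalence is exactly what is needed to conclude both that $F^0_{\alpha,x,c}$ is the $(\kappa,\theta^+)$-extender derived from the composite embedding and that $M^0_{\alpha,x,c}\cong\Ult(V^0,F^0_{\alpha,x,c})$.

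First I would represent $\gamma$ inside the $c$-derived iteration of $M^0_{\alpha,x}$. As $c:\lambda\to\omega$, each $U^0_{\theta_j}$ is iterated only finitely often, so there are a finite set of indices $i_0<\dots<i_{n-1}$ with $c(i_k)>0$ and a function $f$ in $M^0_{\alpha,x}$ with $\rng(f)\subseteq\theta^+$ such that $\gamma = i^0_{\alpha,x,c}(f)$ evaluated at the corresponding finite sequence of critical points of the iteration. The crucial point to invoke here is property (4) of the extensions $j^0_{\alpha,i}$, namely $V^0_{\theta+1}\subseteq M^0_{\alpha,x}$, which yields $i^0_c\uhr\theta^+ = i^0_{\alpha,x,c}\uhr\theta^+$ (as already observed in the definition of $i^0_c$); hence the very same $f$ and the very same critical points also represent $\gamma$ in the $c$-derived iteration of $V^0$. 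Now, since $F^0_{\alpha,x,c} = i^0_c(F^0_{\alpha,x})$, unwinding $Y\in F^0_{\alpha,x,c}(\gamma)$ by one application of the theorem of \L o\'s per measure occurring in the iteration (with the appropriate multiplicities) reduces it to the assertion that $\{\vec\nu \mid Y\in F^0_{\alpha,x}(f(\vec\nu))\}$ lies in the corresponding product of the $U^0_{\theta_{i_k}}$. By the defining property of $F^0_{\alpha,x}$ — that $Y\in F^0_{\alpha,x}(\beta)$ iff $\beta\in j^0_{\alpha,x}(Y)$ — this set coincides with $\{\vec\nu \mid f(\vec\nu)\in j^0_{\alpha,x}(Y)\}$, which, read inside $M^0_{\alpha,x}$ and transported through $i^0_{\alpha,x,c}$, is precisely $\gamma\in i^0_{\alpha,x,c}(j^0_{\alpha,x}(Y))$.

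Having established the equivalence, I would finish by noting that the composite $i^0_{\alpha,x,c}\circ j^0_{\alpha,x}$ has no generators outside $[\kappa,\theta^+)$: the generators of $j^0_{\alpha,x}$ lie in that interval because $F^0_{\alpha,x}$ is a $(\kappa,\theta^+)$-extender, while the subsequent iteration $i^0_{\alpha,x,c}$ only adds images of critical points drawn from the $\theta_j<\theta$; hence $M^0_{\alpha,x,c}$ is generated over the pointwise image of $V^0$ by ordinals below $\theta^+$, so the derived $(\kappa,\theta^+)$-extender $F^0_{\alpha,x,c}$ recovers it, i.e.\ $M^0_{\alpha,x,c}\cong\Ult(V^0,F^0_{\alpha,x,c})$. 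The only genuinely non-routine part is the bookkeeping forced by $c:\lambda\to\omega$: keeping track of the finitely-iterated-ultrapower structure and checking that the representation of $\gamma$ passes unchanged between the $c$-derived iterations of $V^0$ and of $M^0_{\alpha,x}$. Once the equality $i^0_c\uhr\theta^+ = i^0_{\alpha,x,c}\uhr\theta^+$ is in place, everything else is an iterated application of \L o\'s exactly as in Lemma~\ref{Lemma - II - F_omega derived from i_omega circ j_F}.
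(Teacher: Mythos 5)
Your proposal is correct and follows exactly the route the paper intends: the paper gives no independent proof of this corollary but simply remarks that ``the proof of Lemma~\ref{Lemma - II - F_omega derived from i_omega circ j_F} can be easily modified to show the following,'' and your argument is precisely that modification. You correctly identify the key inputs that make the modification go through: the inclusion $V^0_{\theta+1}\subseteq M^0_{\alpha,x}$ and the resulting agreement $i^0_c\uhr\theta^+ = i^0_{\alpha,x,c}\uhr\theta^+$ (which the paper itself records when defining $F^0_{\alpha,x,c}$), together with iterated applications of \L o\'s to handle $c:\lambda\to\omega$ in place of $c:\lambda\to2$. Your closing remark about the generators of the composite embedding lying in $[\kappa,\theta^+)$ is the step the paper leaves implicit (it is needed to pass from ``$F^0_{\alpha,x,c}$ is the derived extender'' to ``$M^0_{\alpha,x,c}\cong\Ult(V^0,F^0_{\alpha,x,c})$''), and you supply it correctly; this is a small but genuine improvement in explicitness over the paper's one-line pointer.
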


For every $\alpha < l(\vec{F})$, $c : \lambda \to \omega$, and $x,y \in S'$,
we saw that $\Omega'_x \in F^0_{\alpha,y}(\kappa)$ if and only if
$x = y$. Since $\kappa < \cp(i^0_c)$ it follows that the same is true for $\Omega'_x$ and $F^0_{\alpha,y,c}(\kappa)$.


\subsection{The poset $\pone$}\label{II subsection - P1}

\begin{definition}[$x-$suitable functions, $\Theta_j$, $\Theta$]${}$
\begin{enumerate}
\item 
Let $c_x : \lambda \to 2$ be the characteristic function of $x \subset \lambda$.
 We say that function $c : \lambda \to \omega$ is $x-$\textit{suitable} if and only if $c(j) = c_x(j)$  for all but finitely many $j < \lambda$. 
 
\item 
For every $j < \lambda$ and $\nu < \kappa$ let
$\Theta_j(\nu)$ to be the $j-$th measurable cardinal above $\nu$,
if $j < \nu$, and $0$ otherwise.\\
Define $\Theta : \kappa \to \kappa$ by $\Theta(\nu) = \bigcup_{j < \lambda}\Theta_j(\nu)^+$.
\end{enumerate}
\end{definition}
 
\begin{definition}[$\pone$]${}$\\
The poset  $\pone =  \la \pone_\mu , \name{\qone_\mu} \mid \mu < \kappa\ra$ is a Magidor iteration of Prikry type forcing, where 
For each $\mu < \kappa$ the forcing $\qone_\mu$ is nontrivial if and only if there are $x \in S'$, $\nu \in \Omega'_x$, and $i \in x \cap \nu$ 
such that $\mu = \Theta_i(\nu)$. Note that $\nu$ is the unique such cardinal and $\mu = \Theta_i(\nu)$ is not a limit of measurable cardinals, thus $\Theta_i(\nu)$ carries a unique normal measure 
$U_{\Theta_i(\nu)}$ in $V$. Furthermore, as $\pzero$ factors into $\pzero_{\nu+1} * \pzero \setminus (\nu +1)$ where
$|\pzero_{\nu+1}| < \Theta_i(\nu)$ and $\pzero \setminus (\nu +1)$ is $(2^{\Theta_i(\nu)})^+-$distributive, we get that $U_{\Theta_i(\nu)}$ 
has a unique extension $U^0_{\Theta_i(\nu)}$ in $V^0$.  Similarly, the fact $\mu$ is not a limit of measurable cardinals implies that $|\pone_\mu| < \mu$. By an argument of Levy and Solovay \cite{Levy-Solovay},
$U_{\Theta_i(\nu)}$  has a unique extension  $U^1_{\Theta_i(\nu)}$ in a $\pone_\nu$ generic extension of $V^0$.\\
{Define} $\qone_\mu = Q(U^1_{\Theta_i(\mu)})$ where $Q(U^1_{\Theta_i(\mu)})$ is the one-point Prikry forcing by $U^1_{\Theta_n(\mu)}$, and introduces a single Prikry point $d(\mu) < \mu$.
\end{definition}

\begin{definition}[$\F$,$Z_{\alpha,x,\c}(j)$, local Prikry functions]${}$\\
Let $\alpha < l(\vec{F})$, $x \in S'$, and $c :\lambda \to \omega$.
\begin{enumerate}
\item 
Let $\mathcal{F}$ denote the set of functions 
$f : \kappa \to \power(\kappa)$ in $V^0$, such that  $f(\mu) \in U^0_\mu$ for every nontrivial iteration stage $\mu < \kappa$.
Note that $|\mathcal{F}| = \kappa^+$ and $j^0_{\alpha,x}(f)(\theta_j) \in U^0_{\theta_j}$ for every $f \in \mathcal{F}$ and $j < \lambda$. 
Furthermore, as $V^0_{\theta+1} \subset M^0_{\alpha,x}$ it follows that 
$\{ j^0_{\alpha,x}(f)(\theta_j) \mid f \in \F\} \in M^0_{\alpha,x}$ and that $|\{ j^0_{\alpha,x}(f)(\theta_j) \mid f \in \F\}|^{M^0_{\alpha,x}} = |\F| = \kappa^+$
for every $j < \lambda$.  Thus $\bigcap\{j^0_{\alpha,x}(f)(\theta_j) \mid f \in \F\} \in U^0_{\theta_j}$.

\item 
For every $j < \lambda$, let 
$Z_{\alpha,x}(j) = \bigcap\{j^0_{\alpha,x}(f)(\theta_j) \mid f \in \F\}$.
\item
 For every $c: \lambda \to \omega$ let
$Z_{\alpha,x,\c}(j) = i^0_{\alpha,x,c}(Z_{\alpha,x}(j))$.
\item 
Suppose that $c : \lambda \to \omega$ is $x-$suitable.
We say that a function $\delta \in \prod_{j \in x} \theta_j^{\c(j)}$
is a \emph{local Prikry function} with respect to $\alpha,x,c$ when 
	\begin{itemize}
	 \item $\delta(j) \in Z_{\alpha,x,c}(j)$ for every $j \in x$. 
	 \item $\delta(j) = \theta_j$ for all but finitely many $j \in x$. 
	\end{itemize}
\end{enumerate}
\end{definition}

Note that for every $j < \lambda$,
$Z_{\alpha,x,\c}(j) \in U^0_{\theta^{c(j)}_j} = i^0_{c}(U^0_{\theta_j})$ is not empty. 
Furthermore, if $c(j) > 0$ then the $c-$derived iteration includes an ultrapower
by $U^0_{\theta_j}$ whose critical point is $\theta_j$, thus 
$\theta_j \in i^0_{\alpha,x,c}(Z_{\alpha,x}(j)) = Z_{\alpha,x,\c}(j)$.\\
 
Let $\Gone \subset \pone$ be a $V^0-$generic filter and denote $V^0[\Gone] = V[\Gzero * \Gone]$ by $V^1$.

\begin{definition}[$i^0_{\alpha,x,c\uhr \sigma}$, $N^0_{\alpha,x,c\uhr \sigma}$, $k^0_{\alpha,x,c\uhr \sigma}$, $(\alpha,c,\delta,\Gone)-$compatible conditions]\label{Definition - local Prikry function and compatible conditions}${}$\\
Let $\alpha < l(\vec{F})$ and $x \subset \lambda$, and suppose that $c : \lambda \to \omega$ is a $x-$suitable funciton. 
\begin{enumerate}
\item For every $\sigma \in \power_{\omega}(\lambda)$
let $i^0_{\alpha,x,c\uhr \sigma} : M^0_{\alpha,x} \to N^0_{\alpha,x,c\uhr \sigma}$ be the embedding obtain from restricting the iteration of $i^0_{\alpha,x,c}$ to the ultrapowers $\Ult^{c(j)}(*,U^0_{\theta_j})$ for $j \in \sigma$.  

\item 
It follows that $N^0_{\alpha,x,c}$ is the direct limit of $\{N^0_{\alpha,x,c\uhr \sigma} \in \power_{\omega}(\lambda)\}$ with obvious embeddings. For every $\sigma \in \power_{\omega}(\lambda)$ let 
$k^0_{\alpha,x,c\uhr \sigma} : N^0_{\alpha,x,c\uhr \sigma} \to N^0_{\alpha,x,c}$
be the resulting limit embedding. 

\item Let $\mu < \kappa$ so that $\mu$ is a nontrivial stagein $\pone$. Suppose $\tau$ is a $\pone_\mu$ name for an ordinal less than $\mu$. For every $p \in \pone$ let $p^{+(\tau,\mu)}$ be the condition obtained by replacing $p_{\mu}$ with $\tau$. Therefore $p^{+(\tau,\mu)} \force \name{d}(\can{\mu}) = \tau$. Note that $p^{+(\tau,\mu)} \geq p$ whenever $p\uhr \mu \force \tau \in p_\mu$. 

\item Suppose $\delta$ is a local Prikry function with respect
to $\alpha,x,c$.
For every $p \in \pone$ and $q \in j^0_{\alpha,x,c}(\pone)$, we say 
that $q$ is $(\alpha,c,\delta,p)-$compatible if there are $\sigma \in \power_\omega(x)$($x = \dom(\delta)$) and $q' \in i^0_{\alpha,x,c\uhr \sigma} \circ j^0_{\alpha,x}(\pone)$ so that 
\begin{enumerate}
\item $q' \geq^* \left(i^0_{\alpha,x,c\uhr \sigma} \circ j^0_{\alpha,x}(p)\right)^{+ \la (\delta(j), \theta_j^{\c(j)}) \mid j \in \sigma\ra}$.
  \item $q' \uhr \kappa = p$.
  \item $q = k^0_{\alpha,x,c\uhr \sigma}(q')$.
\end{enumerate}
We refer to $\sigma \subset \lambda$ as the \emph{ultrapower support} of $q$, and denote it by $\sigma(q)$. It is clearly unique.

\item
We say that $q \in j^0_{\alpha,x,c}(\pone)$ is  
$(\alpha,c,\delta,\Gone)-$compatible 
if it is $(\alpha,c,\delta,p)-$compatible for some $p \in \Gone$.
\end{enumerate}
\end{definition}

\begin{definition}[$F^1_{\alpha,c,\delta}$]\label{Definition - II - F_c,delta}${}$\\
Let $\alpha < l(\vec{F})$, $x \in S'$, and suppose that $c$
is a $x-$suitable function and $\delta$ is a local Prikry function with respect to $\alpha,x,c$.
Define a $(\kappa,\theta^+)-$extender $F^1_{\alpha,c,\delta}$ in $V^1$ as follows:
For every $\gamma <  \theta^+$ and $Y = (\name{Y})_{\Gone} \subset \kappa$,
$Y \in F^1_{\alpha,c,\delta}(\gamma)$
if and only if there is a $(\alpha,c,\delta,\Gone)-$compatible condition $q \in j^0_{\alpha,x,c}(\pone)$ so that $q \force \can{\gamma} \in j^0_{\alpha,x,c}(\name{Y})$.
\end{definition}

Clearly,  $F^1_{\alpha,c,\delta}$ extends $F^0_{\alpha,x,c}$.\\

The following notations will be useful in the analysis of $F^1_{\alpha,c,\delta}$. 
\begin{definition}[$\vec{\theta}_j^c$, $\vec{\theta}_\sigma^c$]
\begin{enumerate}
\item 
For every $j < \lambda$ and $c : \lambda \to \lambda$, let $\vec{\theta}_j^c = \la \theta_j^0, \dots,\theta_j^{c(j)-1}\ra$
denote the sequence of critical points of the $c(j)-$the iterated ultrapower
by $U^0_{\theta_j}$. 
\item 
Suppose that $\sigma = \{j_0,\dots,j_{m-1}\}$ is a finite subset of $\lambda$, 
let $\vec{\theta}_\sigma^c = \vec{\theta}^c_{j_0} \fr \dots \fr \vec{\theta}^c_{j_{m-1}}$. 
\end{enumerate}
\end{definition}
Note that $\vec{\theta}_\sigma^c$ enumerates the generators of $i^0_{\alpha,x,c\uhr \sigma}$.

\begin{lemma}\label{Lemma - II - each F^1_omega alpha is kappa complete}
$F^1_{\alpha,c,\delta}(\gamma)$ is a $\kappa-$complete
ultrafilter for every $\gamma < \theta^+$. 
\end{lemma}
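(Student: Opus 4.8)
Fix $\gamma<\theta^+$ and write $j=j^0_{\alpha,x,c}=i^0_{\alpha,x,c}\circ j^0_{\alpha,x} : V^0\to N^0_{\alpha,x,c}$; recall this is the ultrapower $\Ult(V^0,F^0_{\alpha,x,c})$, so $\cp(j)=\kappa$. The plan is to identify $F^1_{\alpha,c,\delta}(\gamma)$ with the $\gamma$-th ultrafilter derived from an elementary embedding of all of $V^1$. For each $p\in\Gone$ the condition $j(p)$ is $(\alpha,c,\delta,p)$-compatible, with empty ultrapower support, via $q'=j^0_{\alpha,x}(p)$ (so that $k^0_{\alpha,x,c\uhr\emptyset}(q')=i^0_{\alpha,x,c}(j^0_{\alpha,x}(p))=j(p)$); one checks that the $(\alpha,c,\delta,\Gone)$-compatible conditions together with $\{j(p)\mid p\in\Gone\}$ form a directed subset of $j(\pone)$ generating a filter $H$, that $H$ is $N^0_{\alpha,x,c}$-generic, and hence that $j$ lifts through $H$ to a total elementary $j^1 : V^1\to N^0_{\alpha,x,c}[H]$ with $\cp(j^1)=\kappa$ and $F^1_{\alpha,c,\delta}(\gamma)=\{Y\in\power(\kappa)\mid\gamma\in j^1(Y)\}$. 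Granting this, the lemma is immediate: $F^1_{\alpha,c,\delta}(\gamma)$ is an ultrafilter, and if $\mu<\kappa$ and $Y_\xi\in F^1_{\alpha,c,\delta}(\gamma)$ for all $\xi<\mu$ then, as $j^1\uhr\mu=\id$, $\gamma\in\bigcap_{\xi<\mu}j^1(Y_\xi)=j^1\!\left(\bigcap_{\xi<\mu}Y_\xi\right)$, so $\bigcap_{\xi<\mu}Y_\xi\in F^1_{\alpha,c,\delta}(\gamma)$.

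Since only the ultrafilter and $\kappa$-completeness properties are needed, it suffices to meet, for a given instance, finitely many (resp.\ $<\kappa$ many) dense sets; being definable from $\pone$-names lying in $V^0$, their $j$-images lie in $N^0_{\alpha,x,c}$, so genericity of $\Gone$ over $V^0$ and an absoluteness argument reduce each to a density computation of the same flavour as in Lemma \ref{Lemma - II - po genericity extension}. Concretely, for $\kappa$-completeness fix a partition $\langle Y_\xi\mid\xi<\mu\rangle$ of $\kappa$ in $V^1$ ($\mu<\kappa$) witnessed by a name $\name{\vec{Y}}\in V^0$; it is enough to produce one $(\alpha,c,\delta,\Gone)$-compatible $q$ with $q\force\can\gamma\in j(\name Y_\xi)$ for some $\xi$, because $F^1_{\alpha,c,\delta}(\gamma)$ is a proper filter — closure under supersets is clear, and closure under finite intersections is a routine amalgamation of two compatible conditions (push up to the union of their finite ultrapower supports, using the sets $Z_{\alpha,x,c}(\cdot)$ to realign the pinned Prikry points, and take a common extension of their restrictions to $\kappa$ inside $\Gone$).

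The essential point is that the ultrapower support relevant to a statement ``$\can\gamma\in j(\name Z)$'' is \emph{fixed and finite}, independent of $Z\subseteq\kappa$: writing $\gamma=i^0_{\alpha,x,c}(g)\!\left(\vec{\theta}^{c}_{\sigma_\gamma}\right)$ with $\sigma_\gamma\in\power_\omega(\lambda)$ and $g\in M^0_{\alpha,x}$, let $\sigma^*\in\power_\omega(x)$ contain $\sigma_\gamma$ together with the finitely many coordinates on which $c$ differs from $c_x$ or $\delta$ differs from the constant $\theta_j$. By L\'{o}s's theorem for the finite sub-iteration $i^0_{\alpha,x,c\uhr\sigma^*}$, the statement ``$k^0_{\alpha,x,c\uhr\sigma^*}(q')\force\can\gamma\in j(\name Z)$'' translates into a statement about $q'$ in the Prikry-type poset $i^0_{\alpha,x,c\uhr\sigma^*}(j^0_{\alpha,x}(\pone))$, so after enlarging supports we may assume every relevant condition has ultrapower support $\sigma^*$ and its ``$+\langle(\delta(j),\theta_j^{c(j)})\mid j\in\sigma^*\rangle$''-part pinned down. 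One then diagonalises: the set of $p\in\pone$ for which $\left(i^0_{\alpha,x,c\uhr\sigma^*}\circ j^0_{\alpha,x}(p)\right)^{+\langle(\delta(j),\theta_j^{c(j)})\mid j\in\sigma^*\rangle}$ admits a $\geq^*$-extension with the same restriction to $\kappa$ deciding ``$\can\gamma\in i^0_{\alpha,x,c\uhr\sigma^*}(j^0_{\alpha,x}(\name Y_\xi))$'' simultaneously for all $\xi<\mu$ is dense in $\pone$; its density uses the Prikry property of $i^0_{\alpha,x,c\uhr\sigma^*}(j^0_{\alpha,x}(\pone))$ together with the fact that the part of this poset above $\theta$ has a $\kappa^+$-closed direct-extension order in $N^0_{\alpha,x,c\uhr\sigma^*}$ (its first nontrivial stage there, the least inaccessible cardinal of $M^0_{\alpha,x}$ above $\theta$, exceeds $\theta^+>\gamma$, and $N^0_{\alpha,x,c\uhr\sigma^*}$ is closed under $\kappa$-sequences because $M^0_{\alpha,x}$ is), so the $\mu<\kappa$ single-statement decisions can be gathered along one $\geq^*$-decreasing chain. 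Choosing such a $p^*\in\Gone$ and the associated $\geq^*$-extension, then applying $k^0_{\alpha,x,c\uhr\sigma^*}$, yields a $(\alpha,c,\delta,p^*)$-compatible $q$ forcing $\can\gamma$ into the unique block $j(\name Y_\xi)$ it is placed in, so $Y_\xi\in F^1_{\alpha,c,\delta}(\gamma)$.

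The ultrafilter property is the case $\mu=2$ (decide ``$\can\gamma\in j(\name Z)$'' for a single $Z$). I expect the main obstacle to be exactly this tension: a compatible condition must have \emph{finite} ultrapower support, so one cannot combine $<\kappa$ many of them by unioning supports; it is resolved by the observation that the support $\sigma^*$ needed to name $\gamma$ (and to absorb the nongeneric coordinates of $c$ and $\delta$) is fixed and finite, after which everything happens in a single Prikry-type poset. The subsidiary delicate point is to keep the $\geq^*$-extensions from disturbing either the restriction to $\kappa$ (which must remain in $\Gone$) or the pinned Prikry points on $\sigma^*$; this is why the diagonalisation is pushed into the tail above $\theta$ and the below-$\kappa$ choices are cashed out through the genericity of $\Gone$ rather than through any closure of $\pone$ itself (which is only $\sigma$-closed).
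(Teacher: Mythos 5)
Your concrete argument is essentially the paper's proof: fix a finite ultrapower support $\sigma$ containing a representation of $\gamma$, form the pinned condition $t=\left(i^0_{\alpha,x,c\uhr\sigma}\circ j^0_{\alpha,x}(p)\right)^{+\la(\delta(j),\theta_j^{c(j)})\mid j\in\sigma\ra}$, use the Prikry property together with $\kappa^+$-closure of the direct-extension order above $\kappa$ to decide all $\mu<\kappa$ cell-membership statements at once, and then invoke density of the corresponding below-$\kappa$ restrictions in $\Gone$ before pushing forward with $k^0_{\alpha,x,c\uhr\sigma}$. The only caveat is that your opening ``plan'' paragraph overclaims full $N^0_{\alpha,x,c}$-genericity of the filter generated by the compatible conditions, which you neither prove nor need; you correctly retreat to the ``just enough genericity'' argument that is what the paper actually does.
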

\begin{proof}
Let $p$ be a condition in $\Gone$ and suppose that $\name{f}$ is a 
$\pone$ name for a function from $\kappa$ to some $\beta < \kappa$.
Note that $\name{f}$ belongs to $N^0_{\alpha,x,c\uhr \sigma}$ for every relevant
$\alpha,x,c,\sigma$.\\
Choose $\sigma = \{j_0,\dots,j_{m-1}\} \in \power_\omega(\lambda)$ and $\gamma' \in N^0_{\alpha,x,c\uhr \sigma}$ so that $\gamma = k^0_{\alpha,x,c\uhr \sigma}(\gamma')$. \\
Let $t = \left(i^0_{\alpha,x,c\uhr \sigma}\circ j^0_{\alpha,x}(p)\right)^{+ \la (\delta(j), \theta_j^{\c(j)}) \mid j \in \sigma\ra}$.
Since $\pone$ satisfies the Prikry condition and the direct extension order in
$\left(i^0_{\alpha,x,c\uhr \sigma}\circ j^0_{\alpha,x}(\pone)\right) \setminus \kappa$ is $\kappa^+-$closed,there are $p' \geq p$ in $\Gone$ and 
\[t' \geq^* \left(i^0_{\alpha,x,c\uhr \sigma}\circ j^0_{\alpha,x}(p')\right)^{+ \la (\delta(j), \theta_j^{\c(j)}) \mid j \in \sigma\ra} \setminus \kappa
\] so that the condition
$q' = p' \fr t'$ decides the statement $\can{\gamma'} \in i^0_{\alpha,x,c\uhr \sigma}\circ j^0_{\alpha,x}(\name{f}(\can{\nu}))$, for every $\nu < \beta$. 
Let  $\nu^*$ be the unique $\nu < \beta$ so that
 $p' \fr t' \force \can{\gamma'} \in i^0_{\alpha,x,c\uhr \sigma}\circ j^0_{\alpha,x}(\name{f}(\can{\nu}))$. 
We conclude that $q = k^0_{\alpha,x,c\uhr\sigma}(q')$ is $(\alpha,c,\delta,\Gone)-$compatible and forces that $\can{\gamma} \in j^0_{\alpha,c,x}(\name{X_{\nu^*}})$, thus $X_{\nu^*} \in F^1_{\alpha,c,\delta}(\gamma)$. 
\end{proof}
A similar argument shows that $F^1_{\alpha,c,\delta}(\kappa)$ is a normal measure on $\kappa$ in $V^1$. \\

\begin{remark}\label{remark before F completeness}
Suppose that $c : \lambda \to \omega$ is $x-$suitable 
and $\delta$ is a local Prikry function with respect to $\alpha,x,c$. 
It follows there is a finite set $\sigma^* \subset \lambda$
so that for every $j \in \lambda \setminus \sigma^*$,
$(c(j) = 0)$ if $j \not\in x$, and
$(c(j) = 1) \wedge (\delta(j) = \theta_j = \theta^0_j)$ if $j \in x$.
We can also assume that $c(j) > 1$ for every $j \in \sigma^*$, therefore
the generators of the $c-$derived iteration are 
the ordinals in $\vec{\theta}^c_{\sigma^*} \fr \vec{\theta}_{x \setminus \sigma^*}$,
where $\vec{\theta}_{x \setminus \sigma^*} = \la \theta_j \mid j \in x \setminus \sigma^*\ra$
and $\la \theta_j \mid j \in x \setminus \sigma^*\ra = \la \delta(j) \mid j \in x \setminus \sigma^*\ra$. \\
Working in $V^1$, let $\vec{d}_x : \Omega'_x \to \kappa^{<\kappa}$ denote the
local Prikry function, where for every $\nu \in \Omega'_x$, 
$\vec{d}_x(\nu) = \la d(\Theta_j(\nu)) \mid j \in x\cap \nu\ra$. 
Since $q$ is $(\alpha,c,\delta,\Gone)-$compatible we get that for every finite $\sigma \subset x \setminus \sigma^*$,
if $\sigma \subset \sigma(q)$ then
$q \force j^0_{\alpha,x,c}(\name{\vec{d}}_x)(\can{\kappa})\uhr \sigma = 
\la \theta_j \mid j \in \sigma\ra = \vec{\theta}^c_\sigma$. 
\end{remark}

\begin{lemma}\label{Lemma - II - F_omega is complete}
Suppose that $c : \lambda \to \omega$ is $x-$suitable 
and $\delta$ is a local Prikry function with respect to $\alpha,x,c$, 
then $F^1_{\alpha,c,\delta}$ is a $\kappa-$complete extender in
$V^1$.
\end{lemma}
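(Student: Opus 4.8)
\emph{Proof plan.} The plan is to compute $\Ult(V^1,F^1_{\alpha,c,\delta})$ explicitly, to reduce the required $\kappa$-closure to the problem of capturing $V^1$-sequences of ordinals below a fixed bound $\gamma^*<\theta^+$, and then to exploit the fact that $\pone$ is precisely the forcing designed to feed the missing critical-point sequences of the $c$-derived iteration back into the ultrapower. First I would identify $\Ult(V^1,F^1_{\alpha,c,\delta})$ with $M^0_{\alpha,x,c}[H]$, where $H$ is the filter on $(i^0_{\alpha,x,c}\circ j^0_{\alpha,x})(\pone)$ generated by the $(\alpha,c,\delta,\Gone)$-compatible conditions (Definition \ref{Definition - local Prikry function and compatible conditions}), and the ultrapower map $j_{F^1}$ extends $i^0_{\alpha,x,c}\circ j^0_{\alpha,x}$ via $j_{F^1}((\name Y)_{\Gone})=(j^0_{\alpha,x,c}(\name Y))_H$. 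The genericity of $H$ over $M^0_{\alpha,x,c}$ is exactly the argument already carried out in the proof of Lemma \ref{Lemma - II - each F^1_omega alpha is kappa complete} (the Prikry property of $\pone$ together with the $\kappa^+$-closure of the direct-extension order above $\kappa$), and that every member of $M^0_{\alpha,x,c}[H]$ has the form $j_{F^1}(\phi)(\vec\gamma)$ for some $\phi:\kappa\to V^1$ in $V^1$ and $\vec\gamma\in[\theta^+]^{<\omega}$ follows from the representation of $M^0_{\alpha,x,c}$ as $\Ult(V^0,F^0_{\alpha,x,c})$, together with the observation that $H$ is itself reconstructible from $\Gone$ and the bounded data $j^0_{\alpha,x,c}(\name{\vec d}_x)(\kappa)$ — which by Remark \ref{remark before F completeness} consists of the tuples $\vec\theta^c_\sigma$, all below $\theta$. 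In particular $(i^0_{\alpha,x,c}\circ j^0_{\alpha,x})(\pone)\uhr\kappa=\pone$, since $\cp(i^0_{\alpha,x,c})=\theta_0>\kappa$ and $\cp(j^0_{\alpha,x})=\kappa$, so $\Gone=H\uhr\kappa\in M^0_{\alpha,x,c}[H]$, and $\mathcal{P}(\kappa)^{V^1}\subseteq M^0_{\alpha,x,c}[H]$ because a $\pone$-name for a subset of $\kappa$ lies in $H(\kappa^+)^{V^0}\subseteq M^0_{\alpha,x}\subseteq M^0_{\alpha,x,c}$ (as $M^0_{\alpha,x}$ is closed under $\kappa$-sequences from $V^0$ and $\cp(i^0_{\alpha,x,c})>\kappa^+$).

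Next I would reduce. Let $\langle y_\xi\mid\xi<\kappa\rangle\in V^1$ with each $y_\xi\in M^0_{\alpha,x,c}[H]$, say $y_\xi=j_{F^1}(f_\xi)(\vec\gamma_\xi)$ with $f_\xi\in V^1$ and $\vec\gamma_\xi\in[\theta^+]^{<\omega}$. Since $\pzero*\pone$ has size $\le\kappa$ it is $\kappa^+$-c.c., so $\theta^+$ remains a regular cardinal above $\kappa$ in $V^1$ and $\gamma^*:=\sup_{\xi<\kappa}\max(\vec\gamma_\xi)<\theta^+$. As $\langle f_\xi\mid\xi<\kappa\rangle\in V^1$ we have $j_{F^1}(\langle f_\xi\mid\xi<\kappa\rangle)\in M^0_{\alpha,x,c}[H]$, and restricting this to the index set $\kappa$ (an element of $M^0_{\alpha,x,c}[H]$) gives $\langle j_{F^1}(f_\xi)\mid\xi<\kappa\rangle\in M^0_{\alpha,x,c}[H]$. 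Hence it suffices to show $\langle\vec\gamma_\xi\mid\xi<\kappa\rangle\in M^0_{\alpha,x,c}[H]$; that is, it is enough to prove that for every $\gamma^*<\theta^+$ the model $M^0_{\alpha,x,c}[H]$ contains every $\kappa$-sequence of ordinals $<\gamma^*$ that lies in $V^1$.

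For this last step I would use that $H$ restores the generator sequences of the $c$-derived iteration. The enumeration of the generators of $i^0_c$ lying below $\theta$ belongs to $M^0_{\alpha,x,c}[H]$: its coordinates indexed by $j\notin x$ are fixed by $i^0_c$, hence already lie in $M^0_{\alpha,x,c}$, while those indexed by $j\in x$ are precisely the generic Prikry values recorded in $H$ by Remark \ref{remark before F completeness}. Now fix $\gamma^*<\theta^+$; since $\theta$ and $\theta^+$ are fixed points of $i^0_c$, every ordinal $\eta<\gamma^*$ of $M^0_{\alpha,x,c}$ is of the form $i^0_c(g)(\vec\eta)$ for $g\in V^0$ and $\vec\eta$ a finite tuple of generators of $i^0_c$; fixing once and for all in $V^0$ a single family of such witnessing functions for the ordinals below $\gamma^*$ (possible since $|\gamma^*|^{V^0}\le\theta$), a $V^1$-sequence $\langle\eta_\xi\mid\xi<\kappa\rangle$ of ordinals $<\gamma^*$ is coded by a $\kappa$-sequence of witnessing data; translating the attached generator tuples through the generator enumeration available in $M^0_{\alpha,x,c}[H]$ turns this coded object into an element of $\mathcal{P}(\kappa)^{V^1}$, hence into an element of $M^0_{\alpha,x,c}[H]$. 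Applying $i^0_c$ coordinatewise — which fixes the index set $\kappa$ as $\cp(i^0_c)>\kappa$ — then recovers $\langle\eta_\xi\mid\xi<\kappa\rangle$ itself inside $M^0_{\alpha,x,c}[H]=\Ult(V^1,F^1_{\alpha,c,\delta})$, as required.

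The step I expect to be the main obstacle is the last one, and specifically the uniform handling of the representing functions: one must verify that the witnessing data for the $\kappa$ many ordinals $\eta_\xi<\gamma^*$ — functions on finite products $\prod_j\theta_j$ with varying finite index sets, together with the attached generator tuples — really can be absorbed into a single subset of $\kappa$ belonging to $V^1$, so that the Prikry-generic enumeration of generators supplied by $H$ is enough to reassemble the sequence inside the ultrapower. This is where the defining features of $\pone$ (that the only new generator information is bounded below $\theta$ and is exactly the Prikry data isolated in Remark \ref{remark before F completeness}) and of $V=L[E]$ (that $i^0_c$ fixes $\theta$ and $\theta^+$) are used in an essential way. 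The remaining ingredients — the identification of the ultrapower, the reduction of the second paragraph, the $\kappa^+$-c.c.\ of $\pzero*\pone$, and the $\kappa$-closure of $M^0_{\alpha,x}$ over $V^0$ — are either routine or already available in the preceding pages.
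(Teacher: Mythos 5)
Your overall plan mirrors the paper's proof in spirit --- identify $\Ult(V^1,F^1_{\alpha,c,\delta})$ with $M^0_{\alpha,x,c}[H]$, reduce $\kappa$-closure to capturing $\kappa$-sequences of ordinals below $\theta^+$, and use the Prikry data (the point of Remark \ref{remark before F completeness}) to recover the generator tuples of the $c$-derived iteration. Your reduction step ($j_{F^1}(\langle f_\xi\rangle)$ restricted to $\kappa$ yields $\langle j_{F^1}(f_\xi)\rangle$ inside the ultrapower, so only $\langle\vec\gamma_\xi\rangle$ needs to be captured) is correct and is actually tidier than the paper's phrasing at the same point. But the final translation step --- which you yourself flag as the main obstacle --- is where the proof's real content lives, and as written it has a genuine gap.

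The problem is the proposed coding. You fix a $V^0$-family $\langle(g_\eta,\vec\eta_\eta)\mid\eta<\gamma^*\rangle$ of witnesses $\eta=i^0_c(g_\eta)(\vec\eta_\eta)$ and claim that a $V^1$-sequence $\langle\eta_\xi\mid\xi<\kappa\rangle$ of ordinals $<\gamma^*$ ``is coded by a $\kappa$-sequence of witnessing data'' which, after translating generator tuples through the enumeration in $M^0_{\alpha,x,c}[H]$, becomes an element of $\power(\kappa)^{V^1}$. This is circular: the witness $(g_{\eta_\xi},\vec\eta_{\eta_\xi})$ is determined from $\eta_\xi$ via the fixed family, so to decode it inside $M^0_{\alpha,x,c}[H]$ you already need $\eta_\xi$. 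Translating the tuples $\vec\eta_{\eta_\xi}$ through the generator enumeration only recovers finite subsets of $\lambda$; it does not compress a $\kappa$-sequence of ordinals below $\theta^+$ into a subset of $\kappa$. The information content of $\langle\eta_\xi\rangle$ is of size $\theta^+>\kappa$ and cannot in general be absorbed into $\power(\kappa)^{V^1}$. Your observation $\power(\kappa)^{V^1}\subseteq M^0_{\alpha,x,c}[H]$ is correct but does not discharge this step.

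What the paper does instead is construct, for each target sequence $\langle\gamma_i\rangle$, an explicit $\pone$-\emph{name} $\name g$ for a function $g:\kappa\to V^1$ and an ordinal $\gamma^*<\theta^+$ with $[g,\gamma^*]_{F^1_{\alpha,c,\delta}}=\langle\gamma_i\rangle$. The function $g$ is built locally: at $\mu\in\Omega^*_x$ with $\kappa(\mu)=\max(\Omega'_x\cap\mu{+}1)$, the value $g(\mu)$ is a $\kappa(\mu)$-sequence assembled from (a) a fixed $V^0$-function $F^{\vec f}$ packaging the $\kappa$-sequence of representing functions $f_i\in M^0_{\alpha,x}$, (b) a fixed $V^0$-function $F^{\sigma^*}$ representing the finite block $\vec\theta^c_{\sigma^*}$, and (c) the \emph{local} Prikry data $\vec d_x(\kappa(\mu))$, which is a bounded object below $\kappa$. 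The ultrapower by $F^1_{\alpha,c,\delta}$ then blows this local picture up to $\kappa$, and the $(\alpha,c,\delta,\Gone)$-compatible conditions force $j^0_{\alpha,x,c}(\name h)(\gamma^*)(i)=\vec\theta^c_{\sigma_i}$, so $[g,\gamma^*]=\langle\gamma_i\rangle$ pointwise. It is the locality of $g(\mu)$ (everything there lives below $\kappa$) that makes the argument go through, and it is exactly this mechanism that your ``coding'' sketch is missing. To complete your proof you would need to replace the $\power(\kappa)^{V^1}$ coding by the construction of a local name $\name g$ of this kind; otherwise the argument does not close.
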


\begin{proof}
It is sufficient to prove that $\Ult(V^1,F^1_{\alpha,c,\delta})$ is closed under
$\kappa-$sequence of ordinals below $\theta^+$. 
Let $\la \gamma_i \mid i < \kappa\ra$ be a sequence in $V^1$ of ordinals below $\theta^+$.
We use the notations and observations given in the previous remark (\ref{remark before F completeness}).\\
For every $i < \kappa$ there is a finite set $\sigma_i \subset x \setminus \sigma^*$ and a function $f_i \in M^0_{\alpha,x}$ so that
$\gamma_i = i^0_{\alpha,c,x}(f_i)(\vec{\theta}^c_{\sigma^*} \fr \vec{\theta}^c_{\sigma_i})$. \\
Since $\vec{\theta}^c_{\sigma^*}$ is a finite sequence of ordinals below $\theta^+$ we have that $\vec{\theta}^c_{\sigma^*} = j^0_{\alpha,x,c}(F^{\sigma^*})(\gamma^*)$ for some $\gamma^* < \theta^+$ and a function
$F^{\sigma^*} \in V^0$.\\ 
Let $\vec{f}$ denote $\la f_i \mid i < \kappa\ra$ and $\vec{\sigma} = \la \sigma_i \mid i < \kappa\ra$. 
The model $M^0_{\alpha,x} \cong \Ult(V^0,F^0_{\alpha,x})$ is closed under $\kappa-$sequence so $\vec{f},\vec{\sigma} \in M^0_{\alpha,x}$.
Note that $j^0_{\alpha,x,c}(\vec{\sigma})\uhr \kappa = \vec{\sigma}$ and that $i^0_{\alpha,x,c}(\vec{f}) = \la i^0_{\alpha,c,x}(f_i) \mid i < \kappa\ra$. 
Since $i^0_{\alpha,x,c}(\vec{f}) \in M^0_{\alpha,x,c}$, we may assume that
there is a function $F^{\vec{f}} \in V^0$ so that
$j^0_{\alpha,x,c}(F^{\vec{f}})(\gamma^*) = i^0_{\alpha,x,c}(\vec{f})$.\\

Let $\Omega^*_x \subset \kappa$ denote the set of $\mu < \kappa$ so that
$\max(\Omega'_x \cap \mu+1)$ exists, and denote it by $\kappa(\mu)$. 
Note that $\Omega^*_x \in F^0_{\alpha,x,c}(\gamma)$ for every  $\kappa \leq \gamma < \theta^+$, and that $j^0_{\alpha,x,c}([\mu \mapsto \kappa(\mu)])(\gamma) = \kappa$. \\
Let $\name{h}$ be a $\pone$ name for the function $h \in V^1$, so that  $h : \Omega^*_x \to \kappa^{<\kappa}$ and for $\mu \in \Omega^*_x$ we have that
\begin{itemize}
\item $h(\mu)$ is a function, $\dom(h(\mu)) = \kappa(\mu)$,
\item for every $i < \kappa(\mu)$, 
$h(\mu)(i) = \vec{d}_x(\kappa(\mu)) \uhr (\sigma_i \cap \kappa(\mu)) = 
\la d\left( \Theta_j(\kappa(\mu)) \right) \mid j \in \sigma_i \cap \kappa(\mu)\ra$. 
\end{itemize}
Note that this definition makes sense as $\sigma_i \subset x$
and $\kappa(\mu) \in \Omega'_x$. \\
We get that for every $\gamma < \theta^+$, $j^0_{\alpha,x,c}(\name{h})(\gamma)$ is a $j^0_{\alpha,x,c}(\pone)$ name of a function 
with domain $\kappa$, and for every $i < \kappa$,
\[j^0_{\alpha,x,c}(\name{h})(\gamma)(i) = 
\la j^0_{\alpha,x,c}(\name{d})(\theta^c_j) \mid j \in \sigma_i \ra.\]
If $q$ is a 
$(\alpha,c,\delta,\Gone)-$compatible condition and $\sigma_i \subset \sigma(q)$, 
then 
\[q \force j^0_{\alpha,x,c}(\name{h})(\gamma)(i) = \la 
\theta_j \mid j \in \sigma_i \ra = \vec{\theta}^c_{\sigma_i}.\]

\noindent Finally, let $\name{g}$ be the $\pone$ name of the function
$g \in V^1$ so that for every $\mu \in \Omega^*_x$,
\begin{itemize}
\item $g(\mu)$ is a function, $\dom(g(\mu)) = \kappa(\mu)$,
\item for every $i < \kappa(\mu)$, 
$g(\mu)(i) = \left(F^{\vec{f}}(\mu)_i\right)(F^{\sigma^*}(\mu) \fr h(\mu))$.
\end{itemize}

Therefore if $q$ is a 
$(\alpha,c,\delta,\Gone)-$compatible and $\sigma_i \subset \sigma(q)$, 
then 
\[q \force j^0_{\alpha,x,c}(\name{g})(\gamma^*)(i)=
\left(f_i\right)(\vec{\theta}^c_{\sigma^*} \fr \vec{\theta}^c_{\sigma_i}) = \gamma_i.\]

We conclude that in $V^1$, for every $i < \kappa$,
\[ \{ (\mu_0, \mu_1) \in \kappa^2 \mid \mu_0 = g(\mu_1) \} \in F^1_{\alpha,c,\delta}(\gamma_i) \times \F^1_{\alpha,c,\delta}(\gamma^*),\] hence $\la \gamma_i \mid i < \kappa\ra$ is represented by $[g,\gamma^*]_{F^1_{\alpha,c,\delta}} \in \Ult(V^1,F^1_{\alpha,c,\delta})$.
\end{proof}

\begin{definition}[$j^1_{\alpha,c,\delta}$, $M^1_{\alpha,c,\delta}$]${}$\\
Let $\alpha < l(\vec{F})$ and $x \in S'$, and suppose that $c : \lambda \to \omega$ is $x-$suitable and $\delta$ is a local Prikry function with respect to $\alpha,x,c$.\\
Let $j^1_{\alpha,c,\delta} : V^1 \to M^1_{\alpha,c,\delta} \cong \Ult(V^1,F^1_{\alpha,c,\delta})$ be the ultrapower embedding of $V^1$ by $F^1_{\alpha,c,\delta}$, and $G^1_{\alpha,c,\delta} = j^1_{\alpha,c,\delta}(\Gone)$. 
\end{definition}
Lemma \ref{Lemma - II - F_omega is complete} implies that $M^1_{\alpha,c,\delta}$ is closed
under $\kappa-$sequences in $V^1$. 
We also see that $G^1_{\alpha,c,\delta} \subset j^0_{\alpha,x,c}(\pone)$ is generic
over $M^0_{\alpha,x,c}$ and that $j^1_{\alpha,c,\delta}\uhr V^0 = j^0_{\alpha,x,c}$. \\

\subsubsection{The restriction $j^1_{\alpha,c,\delta} \uhr V$}
We proceed to describe the restrictions of the embeddings $j^1_{\alpha,c,\delta}$ to $V^0$ and $V$. We define an iterated ultrapower of $V^0$, generating an
embedding $\pi^0_{\alpha,c,\delta} : V^0 \to  Z^0_{\alpha,c,\delta}$ which coincides with the restriction of $j^1_{\alpha,c,\delta}\uhr V^0$. 
The technical arguments justifying $\pi^0_{\alpha,c,\delta} = j^1_{\alpha,c,\delta}\uhr ^0$ will be similar to those given in \cite{OBN - Mitchell order I} and are therefore omitted here. \\
Let $\T^0_{\alpha,c,\delta} = \la Z^0_i, \sigma^0_{i,j} \mid i<j<\zeta\ra$ be a normal iteration defined by
$Z^0_0 = V^0$
$\sigma^0_{0,1} : Z^0_0 \to Z^0_1$ is the ultrapower embedding of $V^0$ by 
$F^0_{\alpha,x,c}$ ($x = \dom(\delta)$).
Let $\vec{\mu} = \la \mu_i \mid i < \zeta\ra$ be an increasing enumeration of all the ordinals $\mu$ in 
$\sigma^0_{0,1}(\Omega^*)$, i.e., which are of the form 
$\mu = \sigma^0_{0,1}(\Theta_i)(\nu)$ where $\nu \in j^0_{\alpha,x,c}(\Omega'_y)\setminus \theta^+$ for some $y \in S'$ and $i \in \sigma^0_{0,1}(y) \cap \nu$.
Therefore $\vec{\mu}$ enumerates all the nontrivial forcing stages in the iteration 
$\sigma^0_{0,1}(\pone)\setminus (\kappa + 1)$.\\
For every $i  <\zeta$ let $U^0_{\mu_i}$ be the
unique normal measure on $\mu_i$ in $Z^0_1 = N^0_{\alpha,x,c}$. 
For every $0 < i < \zeta$, let 
$\sigma^0_{i,i+1} : Z^0_i \to Z^0_{i+1} \cong \Ult(Z^0_i,\sigma^0_{0,i}(U^0_{\mu_i}))$.
For every limit $i^* \leq \zeta$, $Z^0_{i^*}$ is the direct limit of the iteration 
$\T_{F,\c}\uhr i^* = \la Z^0_i, \sigma^0_{i,j} \mid i<j<i^*\ra$.
Then $\pi^0_{\alpha,c,\delta} = \sigma^0_{0,\zeta}$ and $Z^0_{\alpha,c,\delta} = Z^0_{\zeta}$. 

\begin{corollary}\label{Corollary - II - Genericity of G1 F c delta}
The restriction of $j^1_{\alpha,c,\delta} : V^1 \to M^1_{\alpha,c,\delta}$ to $V^0$
is $\pi^0_{\alpha,c,\delta} : V^0 \to Z^0_{\alpha,c,\delta}$.
\end{corollary}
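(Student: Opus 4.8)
The plan is to verify that the iterated ultrapower $\pi^0_{\alpha,c,\delta} : V^0 \to Z^0_{\alpha,c,\delta}$ defined just above agrees with the restriction $j^1_{\alpha,c,\delta}\uhr V^0$, by identifying both embeddings through their action on generators and checking genericity of the induced filter. First I would recall that $j^1_{\alpha,c,\delta}\uhr V^0$ is determined by the $(\kappa,\theta^+)$-extender $F^0_{\alpha,x,c}$ on the first coordinate and then by the behavior of the $\pone$-side of the forcing above $\kappa$: indeed, since $F^1_{\alpha,c,\delta}$ extends $F^0_{\alpha,x,c}$, the first step $\sigma^0_{0,1} : V^0 \to Z^0_1 \cong \Ult(V^0, F^0_{\alpha,x,c}) = N^0_{\alpha,x,c}$ must coincide with the initial segment of $j^1_{\alpha,c,\delta}\uhr V^0$ before one moves into the generic extension. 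The genericity of $G^1_{\alpha,c,\delta} \subset j^0_{\alpha,x,c}(\pone)$ over $M^0_{\alpha,x,c} = N^0_{\alpha,x,c}$, noted after the definition of $j^1_{\alpha,c,\delta}$, then says that the remaining action of $j^1_{\alpha,c,\delta}$ on $V^0$ is exactly what $\sigma^0_{0,1}(\pone)$ contributes — but the one-point Prikry forcings in that iteration are each equivalent (on the level of the embedding they produce via the chosen Prikry point) to an ultrapower by the relevant normal measure $\sigma^0_{0,i}(U^0_{\mu_i})$. This is precisely the content of the $(\alpha,c,\delta,\Gone)$-compatibility machinery: a compatible condition $q$ forces $j^0_{\alpha,x,c}(\name{\vec{d}}_x)(\can\kappa)\uhr\sigma = \vec\theta^c_\sigma$ (Remark \ref{remark before F completeness}), so the generic Prikry points are read off as the generators $\vec\theta^c_\sigma$, matching the critical points $\mu_i$ of the iteration $\T^0_{\alpha,c,\delta}$.

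Concretely, the steps are: (1) observe $\sigma^0_{0,1} = j^0_{\alpha,x,c}$ as embeddings of $V^0$, so the first coordinate of $\pi^0_{\alpha,c,\delta}$ matches $j^1_{\alpha,c,\delta}\uhr V^0$ on the part coming from $F^0_{\alpha,x,c}$; (2) identify the nontrivial forcing stages of $\sigma^0_{0,1}(\pone)\setminus(\kappa+1)$ with the enumeration $\vec\mu$, which is done in the text by definition; (3) show that the generic filter $G^1_{\alpha,c,\delta}$, restricted to each such stage $\mu_i$, picks the Prikry point $d(\mu_i)$ in a way that the resulting generic ultrapower of $Z^0_i$ by $\sigma^0_{0,i}(U^0_{\mu_i})$ reproduces $\sigma^0_{i,i+1}$ — this uses the standard fact that a one-point Prikry forcing followed by the derived ultrapower is the same as an ordinary ultrapower by the measure, together with the compatibility analysis showing that $G^1_{\alpha,c,\delta}$ does pick the "right" points; (4) take direct limits at limit stages and conclude that $\sigma^0_{0,\zeta} = \pi^0_{\alpha,c,\delta}$ agrees with $j^1_{\alpha,c,\delta}\uhr V^0$ on all of $V^0$, since every element of $V^0$ is moved the same way under both. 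As the statement of the corollary is flagged in the text as following from arguments parallel to those in \cite{OBN - Mitchell order I}, I would cite those for the bookkeeping and focus only on the points where the overlapping-extender setup changes the picture — principally the fact that here the iteration $\T^0_{\alpha,c,\delta}$ begins with the extender ultrapower $F^0_{\alpha,x,c}$ rather than with a pure measure iteration.

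The main obstacle I anticipate is step (3): carefully matching the generic Prikry points selected by $G^1_{\alpha,c,\delta}$ with the critical points $\mu_i$ of $\T^0_{\alpha,c,\delta}$, i.e. proving that $G^1_{\alpha,c,\delta}$ is precisely the filter that the normal iteration $\T^0_{\alpha,c,\delta}$ would induce, and no other. This requires unwinding Definition \ref{Definition - local Prikry function and compatible conditions} to see that the ultrapower support $\sigma(q)$ of a compatible condition corresponds exactly to the finite set of stages at which the iteration has "already acted", and that as $q$ ranges over $\Gone$-compatible conditions these finite supports exhaust all of $\lambda$ (hence all of $\vec\mu$) in the limit. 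The point is that the $x$-suitability of $c$ guarantees $c$ agrees with $c_x$ off a finite set, so the iteration is "almost" the $c_x$-derived iteration and the genericity argument from Lemma \ref{Lemma - II - po genericity extension}-style reasoning, combined with $\kappa^+$-closure of the direct-extension order above $\kappa$ in $j^0_{\alpha,x,c}(\pone)\setminus\kappa$, localizes everything correctly. Once that identification is in hand, the corollary follows by the uniqueness of the generic filter and of the normal iteration representing $j^1_{\alpha,c,\delta}\uhr V^0$.
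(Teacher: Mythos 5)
Your high-level plan---factor the first step as $\sigma^0_{0,1} = j^0_{\alpha,x,c}$, match the remaining action of $j^1_{\alpha,c,\delta}\uhr V^0$ with the iteration along $\vec\mu$, and take direct limits---is the right skeleton and matches what the paper, which defers the details to \cite{OBN - Mitchell order I}, clearly has in mind. But step (3), which you correctly identify as the crux, is executed with the wrong machinery: you conflate two disjoint families of stages. The ultrapower support $\sigma(q)$ of a compatible condition is a finite subset of $\lambda$; it tracks the stages $\theta_j^{c(j)}$ for $j\in\sigma(q)$, all of which lie in $(\kappa,\theta^+)$, and the compatibility machinery, Remark \ref{remark before F completeness}, and the $x$-suitability of $c$ all live there. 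The entire $c$-derived iteration at those stages is already absorbed into the single first step $\sigma^0_{0,1}=i^0_{\alpha,x,c}\circ j^0_{\alpha,x}$ of $\T^0_{\alpha,c,\delta}$. The sequence $\vec\mu = \la\mu_i\mid i<\zeta\ra$ is something else entirely: by the defining clause $\nu \in j^0_{\alpha,x,c}(\Omega'_y)\setminus\theta^+$, every $\mu_i$ sits at or above $\theta^+$, and $\zeta$ is typically far larger than $\lambda$. So ``as $q$ ranges over $\Gone$-compatible conditions these finite supports exhaust all of $\lambda$ (hence all of $\vec\mu$)'' is a non sequitur, and ``the iteration is almost the $c_x$-derived iteration'' is wrong---$\T^0_{\alpha,c,\delta}$ continues well past $\sigma^0_{0,1}$.

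What step (3) actually requires is showing, for each new stage $\mu_i\geq\theta^+$ of the image iteration, that the Prikry point $j^1_{\alpha,c,\delta}(d)$ at the corresponding stage is a genuine new generator of $j^1_{\alpha,c,\delta}\uhr V^0$ beyond those of $F^0_{\alpha,x,c}$, and that it coincides with the critical point of the ultrapower step $\sigma^0_{i,i+1}$. The structural input is that $F^1_{\alpha,c,\delta}$ is a $(\kappa,\theta^+)$-extender, so every element of $M^1_{\alpha,c,\delta}$ has the form $j^1_{\alpha,c,\delta}(f)(\gamma)$ with $\gamma<\theta^+$ and $f:\kappa\to V^1$; unwinding $f$ through a $\pone$-name converts its dependence on $\Gone$ into a dependence on finitely many Prikry points, and it is exactly the Prikry points at stages at or above $\theta^+$ that make $j^1_{\alpha,c,\delta}\uhr V^0$ a strictly longer iteration than $j^0_{\alpha,x,c}$ alone. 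That generator-by-generator matching, not the $\sigma(q)$-on-$\lambda$ bookkeeping you sketched, is the content that must be supplied or imported from the analogous lemma of Part I.
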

Let $\Gone_{\alpha,\c,\delta} = j^1_{\alpha,c,\delta}(\Gone)$.
Then $\Gone_{\alpha,\c,\delta} \subset \pi^0_{\alpha,c,\delta}(\pone)$ is
generic over $Z^0_{\alpha,c,\delta}$. \\

It is clear that the description of $j^1_{\alpha,c,\delta}$ to $V$ results
from the restriction of the $V^0$ iteration $\T^0_{\alpha,c,\delta} = \la Z^0_i, \sigma^0_{i,j} \mid i<j<\zeta\ra$ to $V$, where the embedding 
$\sigma^0_{0,1} = j^0_{\alpha,x,c}$ is replaced with
$\sigma_{0,1} = j_{\alpha,c}$,
and for every $i  < \zeta$, $\sigma^0_{i,i+1} : Z^0_i \to Z^0_{i+1} \cong \Ult(Z_i,\sigma^0_{0,i}(U^0_{\mu_i}))$
is replaced with $\sigma_{i,i+1} : Z_i \to Z_{i+1} \cong \Ult(Z_i,\sigma_{0,i}(U_{\mu_i}))$. We denote the limit model by $Z_{\alpha,c}$. Thus $Z_{\alpha,c} =  \K(Z^0_{\alpha,c,\delta}) = \K(M^1_{\alpha,c,\delta})$. 

\subsubsection{{Further results concerning $\pone$}}
We conclude this subsection with a few results concerning the poset $\pone$ and extenders $F^1_{\alpha,c,\delta}$, which will be used in the study of the normal measures on $\kappa$ in the next generic extension $V^2$.

\begin{lemma}\label{Lemma - II - Stronger Definition for F1 c delta alpha}
 For every $\gamma < \theta^+$ and $X = \name{X}_{\Gone} \in F^1_{\alpha,c,\delta}(\gamma)$
 there is $t \in \Gone$ and a $(\alpha,c,\delta,p)$ compatible condition $q \in j^0_{\alpha,x,c}(\pone)$ so that
 \begin{itemize}
 \item $t \force \can{\gamma} \in j^0_{\alpha,x,c}(\name{X})$,
 \item  $t \setminus \theta^+ = j^0_{\alpha,x,c}(p) \setminus \theta^+$
\end{itemize}
\end{lemma}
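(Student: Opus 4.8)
The statement strengthens Definition~\ref{Definition - II - F_c,delta}: it asserts that membership $X = \name{X}_{\Gone} \in F^1_{\alpha,c,\delta}(\gamma)$ can be witnessed by a compatible condition $q$ together with a condition $t \in \Gone$ where the ``tail past $\theta^+$'' of $t$ is literally the image $j^0_{\alpha,x,c}(p)$ of $t$ itself, rather than an arbitrary direct extension above it. The plan is to start from the defining clause: by Definition~\ref{Definition - II - F_c,delta}, since $X \in F^1_{\alpha,c,\delta}(\gamma)$, there is a $(\alpha,c,\delta,\Gone)$-compatible $q_0 \in j^0_{\alpha,x,c}(\pone)$ with $q_0 \force \can{\gamma} \in j^0_{\alpha,x,c}(\name{X})$, and a condition $p_0 \in \Gone$ witnessing the compatibility of $q_0$. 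Unpacking the compatibility via Definition~\ref{Definition - local Prikry function and compatible conditions}, there are $\sigma = \sigma(q_0) \in \power_\omega(x)$ and $q_0' \in i^0_{\alpha,x,c\uhr\sigma}\circ j^0_{\alpha,x}(\pone)$ with $q_0' \uhr \kappa = p_0$, $q_0' \geq^* (i^0_{\alpha,x,c\uhr\sigma}\circ j^0_{\alpha,x}(p_0))^{+\la(\delta(j),\theta_j^{c(j)})\mid j\in\sigma\ra}$, and $q_0 = k^0_{\alpha,x,c\uhr\sigma}(q_0')$.

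The key observation is that the portion of $q_0'$ lying strictly below $\theta^+$ (equivalently, below $\kappa^+$ after restriction, i.e.\ the ``non-generator part'' which is determined by $p_0$ and the finitely many Prikry points $\delta(j)$, $j\in\sigma$) is already fixed; only the tail past $\theta^+$ is a genuine direct-extension choice. So the idea is to \emph{absorb that tail into a single function on $\kappa$}. Concretely: working in $V^0$, since $\pone \setminus \kappa$ (and hence each $i^0_{\alpha,x,c\uhr\sigma}\circ j^0_{\alpha,x}(\pone)$ past $\theta^+$) has a $\kappa^+$-closed direct extension order, and since $q_0'$ past $\theta^+$ is an element of $M^0_{\alpha,x}$ which is closed under $\kappa$-sequences, one can represent the tail $q_0' \setminus \theta^+$ as $j^0_{\alpha,x}(r)(\vec\gamma)$ for a suitable function $r \in \F$-style family on $\kappa$ — more precisely, choose $p \in \Gone$ extending $p_0$ so that $p \setminus(\text{relevant stages})$ already decides the tail in the sense that $j^0_{\alpha,x,c}(p)\setminus\theta^+$ is a direct extension of the old tail. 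Here one uses genericity of $\Gone$: the set of $p \geq p_0$ whose image tail past $\theta^+$ $\geq^*$-dominates a prescribed element of $Z_{\alpha,x}(j)$-type sets is dense, because the relevant objects lie in $M^0_{\alpha,x}$ and have size $\kappa^+$ there (exactly the argument already used in Lemma~\ref{Lemma - II - F_omega is complete} with the sets $Z_{\alpha,x}(j) = \bigcap\{j^0_{\alpha,x}(f)(\theta_j)\mid f\in\F\}$). Then take $t = p$ and let $q$ be the $(\alpha,c,\delta,p)$-compatible condition obtained by the same $k^0_{\alpha,x,c\uhr\sigma}$-pushforward, but now with tail equal to $j^0_{\alpha,x,c}(p)\setminus\theta^+$; since this tail $\geq^*$-dominates the old one, $q$ still forces $\can{\gamma}\in j^0_{\alpha,x,c}(\name{X})$.

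The main obstacle I expect is the bookkeeping around \emph{which} stages are ``past $\theta^+$'' versus the finitely many Prikry-point stages $\delta(j)$ for $j\in\sigma$ and the generator stages in $[\kappa,\theta^+)$: one must check that replacing the tail by $j^0_{\alpha,x,c}(p)\setminus\theta^+$ does not disturb the compatibility clauses (a),(b),(c) of Definition~\ref{Definition - local Prikry function and compatible conditions}(4) — in particular clause (b), $q'\uhr\kappa = p$, and clause (a), that the new $q'$ still $\geq^*$-extends the required $+\la(\delta(j),\theta_j^{c(j)})\ra$-modification of $i^0_{\alpha,x,c\uhr\sigma}\circ j^0_{\alpha,x}(p)$. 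This is really a matter of noting that the modification only touches stages below $\theta^+$ and the finitely many $\delta(j)$-stages, which are unaffected by the tail replacement, so compatibility is preserved; and that the density argument for finding $p$ goes through because all the data ($q_0'\setminus\theta^+$, the sets $Z$, the name $\name{X}$'s relevant fragment) sit inside the $\kappa$-closed model $M^0_{\alpha,x}$. Once those two points are checked, setting $t := p$ and $q := k^0_{\alpha,x,c\uhr\sigma}(q')$ with the new tail gives exactly the two bulleted conclusions.
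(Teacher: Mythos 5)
Your plan matches the paper's proof essentially step for step: start from a $(\alpha,c,\delta,p_0)$-compatible $q_0 = k^0_{\alpha,x,c\uhr\sigma}(q_0')$ witnessing $X\in F^1_{\alpha,c,\delta}(\gamma)$, represent the tail $q_0'\setminus\theta^+$ as $j^0_{\alpha,x,c\uhr\sigma}(R)(\gamma')$ for a function $R$ on $\kappa$, construct a direct extension $p^*\geq^* p_0$ (in the paper, by intersecting the components $R(\mu')_\mu$ over $\mu'\in\Omega^*\cap\nu$, using that $|\Omega^*\cap\nu|<\mu$) whose image-tail $\geq^*$-dominates the old tail, and then swap the tail of $q_0'$ for $j^0_{\alpha,x,c\uhr\sigma}(p^*)\setminus\theta^+$ before pushing forward by $k^0_{\alpha,x,c\uhr\sigma}$. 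The bookkeeping point you flag --- that the replacement must also update the $\uhr\kappa$-part to $p^*$ in order to preserve clause (b) of Definition~\ref{Definition - local Prikry function and compatible conditions}, which is harmless since $p^*\geq^* p_0 = q_0'\uhr\kappa$ --- is precisely the detail the paper leaves implicit when it asserts that $t=k^0_{\alpha,x,c\uhr\sigma}(t')$ is $(\alpha,c,\delta,p^*)$-compatible, and then closes with a ``standard density argument.''
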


\begin{proof}
Suppose that $q \in j^0_{\alpha,x,c}(\pone)$ be a $(\alpha,c,\delta,p)-$compatible condition so that $q \force \can{\gamma} \in j^0_{\alpha,x,c}(\name{X})$. 
Let $\sigma = \sigma(q)$ be the support of $q$ and $q' \in N^0_{\alpha,x,c\uhr\sigma}(\pone)$ so that $q' \geq^* \left(i^0_{\alpha,x,c\uhr \sigma} \circ j^0_{\alpha,x}(p)\right)^{+ \la (\delta(j), \theta_j^{\c(j)}) \mid j \in \sigma\ra}$
and $q = k^0_{\alpha,x,c\uhr\sigma}(q')$. Let us denote 
$i^0_{\alpha,x,c\uhr \sigma} \circ j^0_{\alpha,x}$ by $j^0_{\alpha,x,c\uhr\sigma}$.\\
Note that $\theta^+$ is a fixed point of $k^0_{\alpha,x,c\uhr\sigma}$.
Let $r = q' \setminus \theta^+$. $r$ is a $j^0_{\alpha,x,c\uhr\sigma}(\pone)\uhr \theta^+$ name for a condition in $j^0_{\alpha,x,c\uhr\sigma}(\pone)\setminus\theta^+$, and $q' \uhr \theta^+ \force r' \geq^* j^0_{\alpha,x,c\uhr\sigma}(p)\setminus \theta^+$. \\
Take $R \kappa \to \kappa$ and $\gamma < \theta^+$ so that
$r = j^0_{\alpha,x,c\uhr\sigma}(R)(\gamma)$. We may assume that for every $\nu \in \Omega'$ and $\mu \in [\nu,\Theta(\nu)^+)$, $R(\mu) \geq^* p\setminus \Theta(\nu)^+$. 
For every $\mu \in \Omega^* \setminus \supp(p)$ and $\nu  = \max(\Omega' \cap \mu)$ let $p^*_\mu$ be a $\pone_\mu$ name
of $\bigcap\{ R(\mu')_\mu \mid \mu' \in \Omega^* \cap \nu\}$. 
The fact $|\Omega^* \cap \nu| \leq \nu < \mu$ implies that
$p^*_\mu$ is a $\pone_\mu$ name for a set in $Q(U^1_\mu)$.
We conclude that $p^* \geq^* p$, and that for every $\nu \in \Omega'$ and $\mu' \in [\nu,\Theta(\nu)^{+})$, $p^*\uhr \Theta(\nu)^{+} \force p^*\setminus \Theta(\nu)^{+} \geq^* R(\mu')$. 
Let $t' = q' \uhr \theta^+ \fr \left(j^0_{\alpha,x,c\uhr\sigma}(p^*)\setminus \theta^+\right)$ and $t = k^0_{\alpha,x,c\uhr \sigma}(t')$.
Then $t \in j^0_{\alpha,x,c}(\pone)$ is $(\alpha,c,\delta,p^*)-$compatible extension of $q$. The Lemma follows from a standard density argument.
\end{proof}

 \begin{lemma}\label{Lemma - II - cell choosing ordinals}
Suppose that $\po = \po_0 * Q_\mu * \po_1$ is an iteration of Prikry forcings, so that  
\begin{enumerate}
 \item $|\po_0| < \mu$,
 \item $Q_\mu = Q(U_\mu)$ as a one point Prikry forcing, 
 \item $\leq^*_{\po_1}$ is $\mu^+-$closed. 
 \end{enumerate}
Let $p \in \po$ such that $p_{Q_\mu} \in U_\mu$, and let
$\name{\eta}$ be a $\po-$name of an ordinal so that $p \force \name{\eta} < \can{\mu}$.
There are $p^* \geq^* p$ and a function $\phi : \mu \to [\mu]^{|\po_0|}$ so that
\begin{enumerate}
 \item $p^*\uhr(\po_0 * Q_\mu)  =  p\uhr(\po_0 * Q_\mu)$ and
 \item $p^* \force \name{\eta} \in \can{\phi}(\name{d}(\can{\mu}))$,
\end{enumerate}
where $\name{d(\mu)}$ is the $Q_\mu-$name for the generic Prikry point. 
\end{lemma}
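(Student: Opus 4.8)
The plan is to reduce first, by a direct extension that touches only the $\po_1$-coordinate, to the case in which $\name{\eta}$ is a $\po_0 * Q_\mu$-name, and then to run a short pigeonhole argument that uses $|\po_0| < \mu$ to confine the possible values of $\name{\eta}$ to a small set depending on the Prikry point $d(\mu)$.

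For the reduction I would work below $p\uhr(\po_0 * Q_\mu)$: fix a $\po_0 * Q_\mu$-generic $G_0 * g$ with $p\uhr(\po_0 * Q_\mu)\in G_0 * g$ (here the hypothesis $p_{Q_\mu}\in U_\mu$, i.e.\ that $d(\mu)$ is not yet decided, is used), so that $\name{\eta}$ becomes a $\po_1$-name which $p_1[G_0 * g]$ forces to be below $\mu$. Since $\po_1$ is Prikry type and $\leq^*_{\po_1}$ is $\mu^+$-closed, I would build a $\leq^*_{\po_1}$-increasing chain of length $\mu$, using the Prikry property at successor steps to decide each statement ``$\name{\eta}=\can{\gamma}$'' ($\gamma<\mu$) and taking $\leq^*_{\po_1}$-upper bounds at limit stages (legitimate, as those sequences have length $\leq\mu<\mu^+$). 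The resulting $q\geq^*_{\po_1}p_1[G_0 * g]$ decides $\name{\eta}$, and since it forces $\name{\eta}<\mu$ it forces it to equal a single ordinal. Mixing over $\po_0 * Q_\mu$ then yields a $\po_0 * Q_\mu$-name $\name{q}$ for such a $\leq^*_{\po_1}$-extension of $p_1$ and a $\po_0 * Q_\mu$-name $\name{\eta}'$ for an ordinal below $\mu$, so that $p^*:=p_0\fr p_{Q_\mu}\fr\name{q}$ satisfies $p^*\geq^* p$, $\ p^*\uhr(\po_0 * Q_\mu)=p\uhr(\po_0 * Q_\mu)$, and $p^*\force\name{\eta}=\name{\eta}'$. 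It then suffices to produce $\phi:\mu\to[\mu]^{|\po_0|}$ with $p_0\fr p_{Q_\mu}\force_{\po_0 * Q_\mu}\name{\eta}'\in\can{\phi}(\name{d}(\can{\mu}))$.

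For the pigeonhole step, for each $\beta$ in the set $p_{Q_\mu}\in U_\mu$ let $q^\beta:=p_0\fr(\la\beta\ra,p_{Q_\mu}\setminus(\beta+1))$, a condition of $\po_0 * Q_\mu$ forcing $d(\mu)=\beta$ ($\mu$-completeness of $U_\mu$ is used here), and set
\[ \Xi_\beta=\{\xi<\mu\mid \exists\, r\leq q^\beta \text{ with } r\force\name{\eta}'=\can{\xi}\}. \]
For each $\xi\in\Xi_\beta$ I would pick a witness of the form $r^\xi=r^\xi_0\fr(\la\beta\ra,A^\xi)$ with $r^\xi_0\leq_{\po_0}p_0$ and $A^\xi\in U_\mu$ — such conditions being dense, as $|\po_0|<\mu$ guarantees (L\'evy--Solovay) that over $V[G_0]$ the $Q_\mu$-conditions whose stem-set comes from $U_\mu$ are dense. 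If two distinct $\xi,\xi'\in\Xi_\beta$ had $r^\xi_0=r^{\xi'}_0$, then $r^\xi_0\fr(\la\beta\ra,A^\xi\cap A^{\xi'})$ would force both $\name{\eta}'=\can{\xi}$ and $\name{\eta}'=\can{\xi'}$; hence $\xi\mapsto r^\xi_0$ embeds $\Xi_\beta$ into $\po_0$, so $|\Xi_\beta|\leq|\po_0|$. Define $\phi(\beta)$ to be $\Xi_\beta$ padded to size $|\po_0|$ for $\beta\in p_{Q_\mu}$, and an arbitrary set of size $|\po_0|$ otherwise. Finally, in any generic $G_0 * g\ni p_0\fr p_{Q_\mu}$ the Prikry point $\beta=d(\mu)$ lies in $p_{Q_\mu}$ and $q^\beta\in G_0 * g$, so a witness $r\in G_0 * g$ for $\name{\eta}'[G_0 * g]=\xi$ has a common extension with $q^\beta$ inside $G_0 * g$, placing $\xi$ in $\Xi_\beta\subseteq\phi(\beta)$. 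Thus $p_0\fr p_{Q_\mu}\force\name{\eta}'\in\can{\phi}(\name{d}(\can{\mu}))$, and since $p^*\force\name{\eta}=\name{\eta}'$ while $p^*\uhr(\po_0 * Q_\mu)=p_0\fr p_{Q_\mu}$, the pair $p^*,\phi$ witnesses the lemma.

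The step I expect to be the main obstacle is the tail reduction: one must decide $\name{\eta}$ ``over $\po_0 * Q_\mu$'' \emph{without} ever extending the $Q_\mu$-stem, i.e.\ purely by a $\leq^*_{\po_1}$-step, which is exactly what the Prikry property combined with $\mu^+$-closure of $\leq^*_{\po_1}$ affords — and this is where the hypothesis $\name{\eta}<\mu$ is essential, to cap the recursion at $\mu$. The pigeonhole in the second part is short, but it is the reason the bound is the sharp $|\po_0|$ rather than merely $\mu$.
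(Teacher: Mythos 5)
Your proof is correct and follows essentially the same two-step strategy as the paper: first use the $\mu^+$-closure of $\leq^*_{\po_1}$ together with the Prikry property to reduce $\name{\eta}$ to a $\po_0 * Q_\mu$-name without touching the bottom coordinates, then exploit $|\po_0| < \mu$ (via Lévy--Solovay and a pigeonhole on the $\po_0$-part of witnesses) to confine the possible values to a set of size $|\po_0|$ indexed by the Prikry point. The only slip is notational: in the paper's Jerusalem convention the witnesses $r$ should satisfy $r \geq q^\beta$ (stronger) rather than $r \leq q^\beta$, but the rest of your argument makes clear you intend this.
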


\begin{proof}
 Since $\leq^*_{\po_1}$ is $\mu^+-$closed and $p \force \name{\eta} < \mu$, there is an is a $p^* \geq^* p$ with $p^*\uhr(\po_0 * Q_\mu)  =  p\uhr(\po_0 * Q_\mu)$, so that $p^*$ reduces $\name{\eta}$ to a  $\po_0 * Q_\mu$ name, 
 namely,  $p^* \force \name{\eta} = \tau$ where
$\tau$ is a $\po_0 * Q_\mu$ name for an ordinal below $\mu$.\\
For every ordinal $\nu$ let $A_\nu \subset \po_0$
be a maximal antichain of conditions which decide $\can{\nu} \in p_{Q_\mu}$.
Note that if $p_0 \in A_\nu$ forces $\can{\nu} \in p_{Q_\mu}$, 
the forcing $\po_0 * Q_\mu$ above $(p_0 * p_{Q_\mu})^{+(\nu,\mu)}$ is equivalent to the forcing $\po_0$ above $p_0$.
For each $p_0 \in A_\delta$ which forces $\can{\nu} \in p_{Q_\mu}$,
let $A_{\nu,p_0}$ be a maximal antichain above $p_0$, 
which consists of conditions which decides the value of $\sigma$ as an ordinal below $\mu$. Let $\phi(\nu) \subset \mu$ be the collection of all possible values of $\sigma$ forced by conditions $A_{\nu,p_0}$ for suitable $p_0 \in A_\nu$. 
It is clear that $|\phi(\nu)| \leq |\po_0|$ and that
$p\uhr (\po_0 * Q_\mu) \force \text{ if }\name{d}(\can{\mu}) = \can{\nu} \text{ then } \tau \in \can{\phi(\nu)}$.
\end{proof}

By applying the previous Lemma consecutively $\omega-$times, we conclude the following result:
\begin{corollary}\label{Corollary - II - guessing omega sequences}
Let $\vec{\mu} = \la \mu_n \mid n < \omega\ra$ be a sequence
of non-trivial $\pone$ forcing stages, and
$\vec{v} = \la \name{\eta_n} \mid n < \omega\ra$ be a $\pone-$name of an ordinal sequence. Suppose $p \in \pone$ forces that $\vec{v} \in \prod_{n<\omega}\can{\Theta_{\mu_n}}$, then there is $p^* \geq^* p$, and a sequence of functions
$\vec{\psi} = \la \psi_n \mid n <\omega\ra$ in $V^0$ so that
\begin{enumerate}
 \item  $\psi_n : \mu_n \to [\mu_n]^{|\pone_{\mu_n}|}$ for every $n < \omega$.
 \item  $p^* \force \forall n<\omega.\thinspace  \name{\eta_n} \in \can{\psi_n}\left(\name{d}(\can{\mu_n})\right).$
\end{enumerate}
\end{corollary}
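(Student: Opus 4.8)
The plan is to obtain $p^{*}$ and $\vec{\psi}$ by iterating Lemma~\ref{Lemma - II - cell choosing ordinals} $\omega$ times, building a $\leq^{*}$-increasing $\omega$-sequence of conditions above $p$ and taking a final $\leq^{*}$-upper bound. Fix the enumeration $\vec{\mu}=\la\mu_{n}\mid n<\omega\ra$ as given. Each $\mu_{n}$, being a nontrivial stage of $\pone$, is of the form $\Theta_{i}(\nu)$ for suitable $x\in S'$, $\nu\in\Omega'_{x}$ and $i\in x\cap\nu$; in particular $\mu_{n}$ is not a limit of measurable cardinals, so $|\pone_{\mu_{n}}|<\mu_{n}$ and $\qone_{\mu_{n}}=Q(U^{1}_{\mu_{n}})$ is a one-point Prikry forcing. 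Moreover every nontrivial stage of $\pone$ lying above $\mu_{n}$ occurs at a measurable cardinal strictly above $\mu_{n}^{+}$, so the direct extension order of the tail $\pone\setminus(\mu_{n}+1)$ is $\mu_{n}^{+}$-closed. Hence, for each $n$, the factorisation $\pone=\pone_{\mu_{n}}*\qone_{\mu_{n}}*\bigl(\pone\setminus(\mu_{n}+1)\bigr)$ satisfies the hypotheses of Lemma~\ref{Lemma - II - cell choosing ordinals}, and by hypothesis $p$ forces $\name{\eta_{n}}$ to be an ordinal below $\mu_{n}$.

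Now define $\leq^{*}$-increasing conditions $p=p_{-1}\leq^{*}p_{0}\leq^{*}p_{1}\leq^{*}\cdots$ recursively. Given $p_{n-1}$, we may first pass to a direct extension whose coordinate at $\mu_{n}$ is a $\pone_{\mu_{n}}$-name for a set in $U^{1}_{\mu_{n}}$ (if the Prikry point at $\mu_{n}$ has already been decided, the step below is only easier and is handled directly). Applying Lemma~\ref{Lemma - II - cell choosing ordinals} to $p_{n-1}$, $\name{\eta_{n}}$ and the factorisation at $\mu_{n}$ yields $p_{n}\geq^{*}p_{n-1}$ with $p_{n}\uhr(\pone_{\mu_{n}}*\qone_{\mu_{n}})=p_{n-1}\uhr(\pone_{\mu_{n}}*\qone_{\mu_{n}})$, together with a function $\psi_{n}:\mu_{n}\to[\mu_{n}]^{|\pone_{\mu_{n}}|}$ lying in the ground model $V^{0}$, such that $p_{n}\force\name{\eta_{n}}\in\can{\psi_{n}}\bigl(\name d(\can{\mu_{n}})\bigr)$. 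Since $p_{n}$ is a direct (hence ordinary) extension of each earlier $p_{m}$, it still forces $\name{\eta_{m}}\in\can{\psi_{m}}\bigl(\name d(\can{\mu_{m}})\bigr)$ for all $m<n$; thus the progress made at earlier stages is preserved, independently of how the ordinals $\mu_{n}$ happen to be ordered.

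Finally, the direct extension order $\leq^{*}$ of the Magidor iteration $\pone$ is $\sigma$-closed (all its nontrivial stages occur at uncountable measurable cardinals), so the sequence $\la p_{n}\mid n<\omega\ra$ admits a $\leq^{*}$-upper bound $p^{*}$. Then $p^{*}\geq^{*}p$, and since $p^{*}\geq^{*}p_{n}$ for every $n$, $p^{*}\force\forall n<\omega.\thinspace\name{\eta_{n}}\in\can{\psi_{n}}\bigl(\name d(\can{\mu_{n}})\bigr)$, so $p^{*}$ together with $\vec{\psi}=\la\psi_{n}\mid n<\omega\ra$ are as required. The only points needing care are the verifications, at each stage, that the tail forcing is $\mu_{n}^{+}$-closed in $\leq^{*}$ so that Lemma~\ref{Lemma - II - cell choosing ordinals} applies, and that the $\omega$-length construction can be capped by a single direct extension; both are standard features of Magidor iterations of Prikry-type forcings at measurable cardinals, already invoked in the preceding lemmas.
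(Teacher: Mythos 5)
Your proof is correct and is exactly the argument the paper has in mind: the paper dismisses this corollary with the single line ``by applying the previous Lemma consecutively $\omega$-times,'' and you have supplied the routine iteration, verifying at each stage that the factorisation of $\pone$ around $\mu_n$ meets the hypotheses of Lemma~\ref{Lemma - II - cell choosing ordinals}, noting that each application only moves the tail by a direct extension so that earlier commitments persist, correctly handling the degenerate case where the Prikry point at $\mu_n$ is already decided, and closing off with $\sigma$-closure of $\leq^{*}$ on the Magidor iteration. No gaps.
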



\subsection{The Poset $\ptwo$}\label{II subsection - P2}
\begin{definition}[$\ptwo$]${}$\\
$\ptwo = \la \ptwo_\nu, \qtwo_\nu \mid \nu \leq \kappa\ra$ is a collapsing and coding iteration with a Friedman-Magidor (nonstationary) support.
The nontrivial stages of the iteration are at $\nu \in \Omega' \cup\{\kappa\}$,
and $\qtwo_\nu = \Coll(\nu^+, \Theta(\nu)^+) * \Code(\name{g_\nu})$ where
\begin{itemize}
\item 
$\Coll(\nu^+,\theta(\nu)^+)$ is a Levy collapsing, which introduces a surjection
$g_\nu : \nu^+ \to \Theta(\nu)^+$.
\item $\Code(\name{g_\nu})$ codes $g_\nu$ and itself using a closed unbounded set in $\nu^+$, as in Definition \ref{Definition - Collapse and Code}.
\end{itemize}
\end{definition}

Let $\Gtwo \subset \ptwo$ be a generic filter of the forcing $\ptwo$ over $V^1$.
We denote $V^1[\Gtwo] = V[\Gone*\Gtwo]$ by $V^2$. \\
The arguments in Section \ref{Section - II - First Overlapping} (addressing the extension of the embeddings $j_{F,n}$ in $V$ to a $V^{\po}$ embeddings) guarantee that the following can be defined:
\begin{definition}[$U^2_{\alpha,c,\delta}$,$j^2_{\alpha,c,\delta}$, $M^2_{\alpha,c,\delta}$]\label{Corollary - sum U2}${}$\\
Every $j^1_{\alpha,c,\delta} : V^1 \to M^1_{\alpha,\c,\delta}$ has a unique extension to a $V^1[\Gtwo]$ embedding, denoted by 
$j^2_{\alpha,\c,\delta} : V^1[\Gtwo] \to M^2_{\alpha,c,\delta}$, so that
\begin{enumerate}
\item $M^2_{\alpha,c,\delta} = M^1_{\alpha,\c,\delta}[\Gtwo_{\alpha,\c,\delta}]$  where $\Gtwo_{\alpha,\c,\delta}  = j^2_{\alpha,\c,\delta}(\Gtwo) \subset j^1_{\alpha,\c,\delta}(\ptwo)$ is generic over $M^1_{\alpha,\c,\delta}$.
\item The set $\{p \fr (j^1_{\alpha,c,\delta}(p) \setminus \kappa+1) \mid p \in \Gtwo\}$ meets every dense open set in $j^1_{\alpha,x,\delta}(\ptwo)$ and generates $\Gtwo_{\alpha,\c,\delta}$.
\item The set $U^2_{\alpha,\c,\delta} = \{X \subset \kappa \mid \kappa \in j^2_{\alpha,\c,\delta}(X)\}$ is a normal measure on $\kappa$ in $V^1[\Gtwo]$.
\item $j^2_{\alpha,\c,\delta} : V^1[\Gtwo] \to M^2_{\alpha,c,\delta}$ coincides with the ultrapower embedding and model of $V^1[\Gtwo]$ by $U^2_{\alpha,c,\delta}$.
\end{enumerate}
\end{definition}

\begin{theorem}\label{Proposition - II - W = U c delta -  the omega case}
Suppose that $W$ is a normal measure on $\kappa$ in $V^2$. Then $W = U^2_{\alpha,c,\delta}$ for some  $\alpha < l(\vec{F})$, $x \in S'$, a function  $c : \lambda \to \omega$ which is $x-$suitable, and a local Prikry function $\delta$ with respect to $\alpha,x,c$.
\end{theorem}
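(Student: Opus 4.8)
The plan is to mimic the structure of the proof of Proposition \ref{Proposition - II - Analysis of Normal Measures by P}, using Schindler's analysis of normal-measure ultrapowers in the almost-linear core model, and then to identify the relevant extender and its associated data. Let $j_W : V^2 \to M_W \cong \Ult(V^2,W)$ and set $j = j_W \uhr V : V \to M$. By Schindler (\cite{Schindler - ICM}) the restriction $j$ is the iteration map along a cofinal branch $b$ of a normal iteration tree $T$ on $V = \K(V^2)$, and $M_W = M[G^0_W * G^1_W * G^2_W]$ for some generic $G^0_W * G^1_W * G^2_W \subset j(\pzero * \pone * \ptwo)$. The first task is to rule out infinite iteration by a single measure along $b$: as in Proposition \ref{Proposition - II - Analysis of Normal Measures by P}, an $\omega$-subiteration by one measure would produce an ordinal $\delta \le j(\kappa)$ which is inaccessible in $M$ but has cofinality $\omega$ in $M_W$ (since $M_W$ is closed under $\omega$-sequences in $V^2$), forcing $j(\pzero * \pone * \ptwo)$ to add a cofinal $\omega$-sequence to $\delta$; this must be blocked by noting $\pzero$ is $\sigma$-closed, $\pone$ is a Prikry-type iteration adding no new bounded subsets below any given stage, and $\ptwo$ is $\sigma$-closed, hence the composite adds no new $\omega$-sequences of ordinals. (Some care is needed because $\pone$ genuinely adds $\omega$-sequences at the Prikry points; but those points are not images of critical points of $b$, and the relevant $\delta$ sits above $\kappa$ where the tail forcing is suitably closed.)

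Next, since $\cp(j) = \kappa$, factor $j = \pi_1 \circ j_F$ where $F$ is an extender on $\kappa$ in $V$ and $\cp(\pi_1) > \kappa$; the remaining iteration along $b$ applies each extender finitely often, so $j_F(\kappa)$ is a fixed point of $\pi_1$ and is an inaccessible limit of measurable cardinals in $M$ with no overlapping extenders below it. As in the earlier argument, the functions $\psi_\delta$ (or their analogues adapted to $\pzero * \pone * \ptwo$) show $j_W(\kappa) \ge \theta^+$ and $\Theta^+(\nu)$-type bounds force $\nu(F) \ge \theta^+$, so $F$ has full length. By ground-model assumption (4) the only extenders on $\kappa$ of support $\theta^+$ in $E$ are the $F_\alpha$, and by assumption (6) no extender with $\cp < \kappa$ reaches $\kappa$, so $F$ lies in a model obtained from $V$ by iterating the normal measures $U_{\theta_i}$ on the measurable cardinals in $\vec\theta$. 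Again cofinality-$\omega$ considerations (the $\omega$-th image $\theta_{i,\omega}$ of any $\theta_i$ would be inaccessible in $N_c$ but have cofinality $\omega$ in $M_W$) bound the number of ultrapowers by each $U_{\theta_i}$ to be finite, so $F = i_c(F_\alpha) = F_{\alpha,c}$ for some $\alpha < l(\vec F)$ and some $c : \lambda \to \omega$ with finite support in the appropriate sense — and after accounting for the Sacks/splitting component $\pzero$ this is $F^0_{\alpha,x,c}$ for the unique $x \in S'$ determined by the generic Sacks function value $s_\kappa$, which $W$ sees via $\Omega'_x \in W$.

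It remains to recover the $\pone$-data, i.e.\ to show $W$ actually arises from an extender of the form $F^1_{\alpha,c,\delta}$ with $c$ $x$-suitable and $\delta$ a local Prikry function, and then that $W = U^2_{\alpha,c,\delta}$. The key is to read off $\delta$ and the $x$-suitability of $c$ from the way $G^1_W$ interacts with $j$: the restriction $j \uhr V^1$ must coincide with $\pi^0_{\alpha,c,\delta}$ composed appropriately (Corollary \ref{Corollary - II - Genericity of G1 F c delta} and the description of $\T^0_{\alpha,c,\delta}$), which pins down that the generic Prikry points $\vec d_x(\nu)$ are mapped under $j$ to the generators $\vec\theta^c_\sigma$, and this forces $c$ to agree with $c_x$ off a finite set (otherwise one of the above cofinality obstructions recurs or the iteration fails to be a well-founded almost-linear one) and forces the values $\delta(j) = j_W(\name{\vec d}_x)(\kappa)(j)$ to land in the sets $Z_{\alpha,x,c}(j)$ (this uses Lemma \ref{Lemma - II - Stronger Definition for F1 c delta alpha} together with the definition of $\F$: every $f \in \F$ gives $j$-image containing $\delta(j)$). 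Finally, having identified $F^1_{\alpha,c,\delta}$, one checks $G^2_W = G^2_{\alpha,c,\delta}$ by the coding argument of Proposition \ref{Proposition - II - Analysis of Normal Measures by P} ($\Code$ posets and $H(\kappa^+)^{M_W} = H(\kappa^+)^V$ force $\Gtwo = G^2_W \uhr (\kappa+1)$, and $j``\Gtwo \subset G^2_W$ gives the rest), and then the characterization in Definition/Corollary \ref{Corollary - sum U2} together with the generation property of $j``$ along the composite forcing yields $X \in W \iff X \in U^2_{\alpha,c,\delta}$. I expect the \textbf{main obstacle} to be the middle step — correctly extracting $\delta$ and the $x$-suitability of $c$ from the branch $b$ and the generic $G^1_W$, since the $\pone$-iteration is genuinely non-closed and one must argue that $W$'s ultrapower "guesses" only finitely much Prikry information and that this guessed information is exactly a local Prikry function; here Corollary \ref{Corollary - II - guessing omega sequences} and Lemma \ref{Lemma - II - cell choosing ordinals} should do the bookkeeping, bounding the guessed sequences inside sets of small size so that they cannot escape $Z_{\alpha,x,c}(j)$ or violate $x$-suitability.
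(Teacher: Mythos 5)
Your opening moves are exactly the paper's: invoke Schindler to realize $j_W\uhr V$ as a normal iteration of $V = \K(V^2)$, use the $\psi_\delta$ functions to force $j_W(\kappa) \ge \theta^+$, argue via cofinality that no extender is applied infinitely often (this is correct — the composite $\pzero*\pone*\ptwo$ adds no new cofinal $\omega$-sequences to $V$-inaccessibles, and your caveat about $\pone$ adding Prikry $\omega$-sequences is fine because those are bounded subsets, not cofinal in inaccessibles), factor $j$ through a full $(\kappa,\theta^+)$-extender $F' = F_{\alpha,c}$, and read $x \in S'$ off the generic Sacks value at $\kappa$ while reading $\delta$ off $j_W(d)$ at the generators. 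Up to that point you track the paper faithfully.

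The genuine gap is the step you yourself flag as ``the main obstacle'': establishing that $c$ is $x$-suitable and $\delta$ is a local Prikry function. You propose to get this from ``cofinality obstructions'' and from $W$ only being able to guess ``finitely much Prikry information'' via Corollary \ref{Corollary - II - guessing omega sequences} and Lemma \ref{Lemma - II - cell choosing ordinals}. That is too coarse. The paper decomposes the statement into five separate finiteness claims and each requires a distinct argument, several of which are \emph{not} cofinality arguments at all. Concretely: showing $c(j) \ge 1$ for a.e.\ $j \in x$ uses density of a specific subset of $j_W(\pone)$ contradicting genericity; showing $\delta(j) \le \theta_j^{c(j)-1}$ for a.e.\ $j$ uses closure points of auxiliary functions $g_j$ built from representations of $\delta(j)$ and a $\kappa^{++}$-c.c.\ covering argument to pull an $\omega$-sequence of clubs back into $V^0$; showing $\delta(j) \ge \theta_j^{c(j)-1}$ for a.e.\ $j$ uses the guessing corollary \emph{plus} a factoring $i^0_{\alpha,x,c} = k' \circ i^0_{\alpha,x,c'}$ with $\cp(k') = \theta_j^{c(j)-1}$ to derive a contradiction with $\theta_j^{c(j)-1} \notin \phi_j(\delta(j))$; showing $c(j) = 1$ for a.e.\ $j \in x$ is a pure cardinality bound on the size of guessing sets, $|\phi_j(\theta_j^{c(j)-1})| \le \theta_{j-1}^{c(j-1)} < \theta_j$, reflected through one ultrapower by $U^0_{\theta_j}$; and showing $c(j) = 0$ for a.e.\ $j \notin x$ again combines the guessing corollary with the critical-point structure of $k^0_{\alpha,x,c\uhr\sigma}$. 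None of these five reduces to the $\omega$-cofinality obstruction you used to bound iteration multiplicities, and without this case analysis you have not shown that $U^2_{\alpha,c,\delta}$ even exists as an object of the right kind. (Your last paragraph, matching $\Gtwo_W$ and concluding $U^2_{\alpha,c,\delta} \subseteq W$ hence $=W$, is structurally right once the existence question is settled, but it also needs the paper's Lemma \ref{Lemma - II - Stronger Definition for F1 c delta alpha} and Claim \ref{claim - q compatible with U2 alpha c delta is in G1W} to place the compatible conditions inside $\Gone_W$, which you only gesture at.)
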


Let $j_W : V^2 \to M_W \cong \Ult(V^2,W)$.
$j_W\uhr V : V \to M$ results from a normal iteration $\pi_{0,b}^\T$ generated by an iteration tree $\T$ of $V$ and a branch $b$.
Moreover $M_W = M[\Gzero*\Gone_W*\Gtwo_W]$ where $\Gzero_W*\Gone_W*\Gtwo_W = j_W(\Gzero*\Gone*\Gtwo) \subset j(\pzero*\pone*\ptwo)$ is generic over $M$.\\

The description of a collapsing and coding generic extension $V^{\po}$ in Section \ref{Section - II - First Overlapping} applies to $\ptwo$. We get that in $V^2$, 
there is a sequence of functions  $\la \psi_\delta \mid \delta < \theta^+ \ra \subset {}^\kappa \kappa$ so that for every $\delta_0\neq \delta_1$,
$\{ \nu < \Omega' \mid \psi_{\delta_0}(\nu)  \neq \psi_{\delta_1}(\nu)\}$ is bounded in $\kappa$. Since $\Omega' \in W$ we get $j_W(\kappa) \geq \theta^+$. \\
The forcing $\pzero*\pone*\ptwo$ does not add cofinal $\omega-$sequences to inaccessible cardinals in $V$. Therefore $\Gzero_W*\Gone_W*\Gtwo_W$ do not add
such sequences to inaccessible cardinals in $M$. Since $M_W$ is closed under $\omega-$sequences in $V^2$, it follows that there are no ordinals $\mu$ of countable cofinality in $V^2$, which are inaccessible in $M$. 
Therefore, the iteration $T$ cannot use the same measure/extender more than finitely  many times. Since $\cp(j) = \kappa$ and $j(\kappa) \geq \theta^+$, we conclude 
that $j = k \circ j_{F'}$, where
$F'$ is an extender on $\kappa$ in $V$ with $\nu(F') = \theta^+$ and
$\cp(k) > \theta^{+}$.\\
Since $\{U_{\theta_j} \mid j < \lambda\}$ are the only normal measures which overlaps
extenders on $\kappa$ in $V$, and each can be applied finitely many times, 
we conclude that there are $\alpha < l(\vec{F})$ and $c : \lambda \to \omega$
such that $F' = i_c(F_\alpha) = F_{\alpha,c}$.\\
Let $s^{G^0_W}_{j(\kappa)} : j(\kappa) \to j(\lambda)$ be the $G^0_W-$induced generic Sacks function. Let $ i = s^{G^0_W}_{j(\kappa)}(\kappa) < \min(\kappa,\lambda)$,
and let $x \in S'$ so that $x = x_i$. It follows that $\Omega'_x \in W$, and that
the the $j_W(\pone)$ non-trivial forcing stages between $\kappa$ and $\theta^+$, are
at the points $\theta^{c(j)}_j$, $j \in x$. 
Let $\delta \in \prod_{j \in x}\theta^{c(j)}_j$ be the function defined by 
$\delta(j) = j_W(d)(\theta^{c(j)}_j)$. 

The fact that $\Omega'_x \in W$ implies that extension of
$j = j_W \uhr V$ to an embedding of $V^0$ is of the form Let $j^0 = k^0 \circ j^0_{\alpha,x,c}$, where $j^0_{\alpha,x,c}$ is the ultrapower embedding of $V^0$
by $F^0_{\alpha,x,c} = i^0_x(F_{\alpha,x})$.\\
Recall that $j^0_{\alpha,c,x} = i^0_{\alpha,x,c} \circ j^0_{\alpha,x}$ where
$j^0_{\alpha,x}$ is the ultrapower embedding of $V^0$ by $F^0_{\alpha,x}$ 
and $i^0_{\alpha,x,c}$ is the $c-$derived embedding of $M^0_{\alpha,x} \cong \Ult(V,F^0_{\alpha,x})$. 

\begin{claim}\label{Claim - W = U2, c,delta are x suitable}
$c$ is a $x-$suitable function and $\delta$ is a local Prikry function with respect
to $\alpha,x,c$.
\end{claim}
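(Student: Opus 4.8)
The plan is to read $c$ and $\delta$ off the data already produced --- the normal iteration $\pi^\T_{0,b}=j_W\uhr V=k\circ j_{F_{\alpha,c}}$ with $\cp(k)>\theta^+$, and the generic $\Gone_W=j_W(\Gone)$ --- and then to check, clause by clause, the definitions of ``$x$-suitable'' and of ``local Prikry function''. The relevant readings are: $c(j)$ is the number of times $b$ applies (an image of) $U_{\theta_j}$ inside the $j_{F_{\alpha,c}}=i_{\alpha,c}\circ j_\alpha$-part of the iteration (by Lemma \ref{Lemma - II - F_omega derived from i_omega circ j_F} this is exactly the $c$ of the $c$-derived iteration); the nontrivial stages of $j_W(\pone)$ lying in $(\kappa,\theta^+)$ are precisely the cardinals $\theta^{c(j)}_j$ for $j\in x$, because these are the $j$-th measurable cardinals above $\kappa$ of $M$ (the part of $M$ below $\theta^+$ is not moved by $k$) and $\kappa\in j_W(\Omega'_y)$ only for $y=x$; and $\delta(j)=j_W(d)(\theta^{c(j)}_j)<\theta^{c(j)}_j$ is the one-point Prikry point chosen by $\Gone_W$ at the stage $\theta^{c(j)}_j$ for $j\in x$.

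Two clauses are short. That $c(j)<\omega$ for each $j$ is the finiteness-of-iteration argument already run above: an $\omega$-block of consecutive images of $U_{\theta_j}$ along $b$ would produce an ordinal of countable cofinality in $M_W$ that is inaccessible in an iterate, which is impossible since $\pzero*\pone*\ptwo$ adds no cofinal $\omega$-sequences to inaccessibles. For $\delta(j)\in Z_{\alpha,x,c}(j)$: by definition $Z_{\alpha,x,c}(j)=i^0_{\alpha,x,c}(Z_{\alpha,x}(j))$, and since $i^0_{\alpha,x,c}$ agrees with $i^0_c$ on $\theta^+$ (subsection \ref{II subsection - P0}) this set lies in $\K(M^0_{\alpha,x,c})$, hence in the model $M^0_W$ over which $j_W(\pone)$ is forced, and it belongs to the normal measure used by $j_W(\pone)$ at stage $\theta^{c(j)}_j$ (up to the Levy--Solovay extension); as $\delta(j)$ is the generic point of a one-point Prikry forcing, it meets every ground-model measure-one set there, in particular $Z_{\alpha,x,c}(j)$. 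A useful byproduct: since $Z_{\alpha,x}(j)\in U^0_{\theta_j}$ one has $Z_{\alpha,x}(j)\subseteq\theta_j$, so $\theta_j\in Z_{\alpha,x,c}(j)$ can happen only if the $c$-derived iteration applied $U^0_{\theta_j}$ at least once (with $\theta_j=[\mathrm{id}]$), i.e.\ $c(j)\geq 1$; hence once $\delta(j)=\theta_j$ for cofinitely many $j\in x$ is known, $c(j)\geq 1$ for those $j$ follows automatically.

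The remaining content --- that $\{j<\lambda:c(j)\neq c_x(j)\}$ is finite and $\delta(j)=\theta_j$ for all but finitely many $j\in x$ --- is the heart of the claim, and I expect it to be the main obstacle. The strategy is a diagonalization against $j_W{}''\Gone$, in the style of the density arguments that pin down $F^1_{\alpha,c,\delta}$. Assume the union of the two exceptional sets is infinite and fix an increasing $\omega$-sequence $j_0<j_1<\cdots$ of exceptional indices; note that a ``$c$-deviation'' ($c(j)\geq 1$ with $j\notin x$, or $c(j)\geq 2$ with $j\in x$) contributes a genuine critical point of $b$ sitting strictly between consecutive ``expected'' measurables, and that $c(j)=0$ with $j\in x$ forces $\theta^{c(j)}_j=\theta_j$ to carry a nontrivial $j_W(\pone)$-stage, hence $\delta(j)<\theta_j$. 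Using that $M_W$ is closed under $\omega$-sequences in $V^2$, the corresponding $\omega$-sequence of deviating values lies in $M_W$; pulling it back through $j_W$ into $M$ and applying Corollary \ref{Corollary - II - guessing omega sequences} inside $M^0_W$, it is captured by a single $M^0_W$-function of the finitely-many Prikry points $d(\theta^{c(j_n)}_{j_n})$. But the design of $\pone$ together with the compatibility conditions (Remark \ref{remark before F completeness}) forces the Prikry point derived from any $\nu\in\Omega'_x$ at the $j$-th measurable above $\nu$ to equal the critical point $\theta_j$ on all but finitely many coordinates; pushing this up through $j_W$ and confronting it with a density computation against $j_W{}''\Gone$ contradicts the supposed infinite deviation. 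Keeping straight which ordinals are critical points of $\T$, which are nontrivial stages of $j_W(\pone)$, and which are merely passive measurables of $M$ --- and importing the analysis of the embeddings $j^1_{\alpha,c,\delta}$ from subsection \ref{II subsection - P1} to run the density argument --- is where the real difficulty lies.
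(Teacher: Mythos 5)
Your decomposition of the problem is sound and the two easy clauses are handled correctly (finiteness of each $c(j)$, and $\delta(j) \in Z_{\alpha,x,c}(j)$ by genericity of the one-point Prikry filter), and your underlying strategy for the hard clauses --- $\omega$-closure of $M_W$, pulling an $\omega$-sequence of deviations back into $M[\Gzero_W]$ or $N^0_{\alpha,x,c}$, applying Corollary~\ref{Corollary - II - guessing omega sequences}, and then contradicting genericity of $\Gone_W$ via an explicit dense set --- is exactly the engine the paper uses. But the heart of the claim is left as a sketch, and you acknowledge this yourself; the gap is genuine, and it is not merely bookkeeping.

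The reason a single ``diagonalization against infinite deviation'' will not close the gap is that the five types of deviation have materially different contradictions. The paper proceeds through five separate subclaims, each with its own argument: (i) $c(j)\geq 1$ for cofinitely many $j\in x$ is refuted by exhibiting a dense set of $p\in j_W(\pone)$ whose $\theta^{c(j)}_j$-coordinate is shrunk below a bounding sequence $i^0_{\alpha,x,c}(\vec{\gamma}^*)$; (ii) $\delta(j)\leq\theta_j^{c(j)-1}$ uses a dense set of $p$ forcing the Prikry coordinate into a club of closure points, which $\delta(j)$ avoids if $\delta(j)>\theta_j^{c(j)-1}$; (iii) $\delta(j)\geq\theta_j^{c(j)-1}$ needs Corollary~\ref{Corollary - II - guessing omega sequences} \emph{and} a factorization through $i^0_{\alpha,x,c'}$ with $c'(j)=c(j)-1$, exploiting that $\delta(j)<\cp(k')$ makes $\phi_j(\delta(j))$ an image under $k'$ and hence cannot contain the critical point; (iv) $c(j)=1$ is a pure cardinality bound ($|\phi_j(\theta_j^{c(j)-1})|<\theta_j$ cannot cover a measure-one set for $U^0_{\theta_j}$); and (v) $c(j)=0$ on $\lambda\setminus x$ again uses Corollary~\ref{Corollary - II - guessing omega sequences} together with the observation that $\theta_j$ is a critical point of $k^0_{\alpha,x,c\uhr\sigma}$ once $j\notin\sigma$. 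Steps (i)--(v) have a strict logical order (e.g., (ii) and (iii) together give $\delta(j)=\theta_j^{c(j)-1}$, and only then does (iv) yield $\delta(j)=\theta_j$; whereas your ``useful byproduct'' tries to run that inference the other way round). There is also a hidden technical point your sketch glosses over: the $\omega$-sequences you want to diagonalize against live in $V^1$ or $V^2$, not in $V^0$, so you must first invoke that $\ptwo$ adds no new $\omega$-sequences and $\pzero*\pone$ is $\kappa^{++}$-c.c.\ to replace them by sequences in $V^0$ (or in $N^0_{\alpha,x,c}\cap V^0_{\theta+1}$) before any of the density sets are even definable over $M[\Gzero_W]$. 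Without these separations and reductions, ``confronting with a density computation'' stays at the level of intent rather than proof.
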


We prove the claim for the more difficult case when $x$ is infinite. The proof for the finite case $x$ is simpler. We separate the claim into a series of subclaims:

\begin{subclaim}
$\c(j) \geq 1$ for all but finitely many  $j \in x$.
\end{subclaim}
Suppose otherwise, and fix a countable set $y \subset x$ so that 
$c(j) = 0$ for all $j \in y$, i.e., $\theta_j^{c(j)} = \theta_j$. 
We have that $i^0_{\alpha,x,c}(\theta_j) = \theta_j$ for every $j \in y$, so $i^0_{\alpha,x,c}$ maps each $\theta_j$ cofinaly to itself. In particular, there is $\gamma_j < \theta_j$ so that 
$i^0_{\alpha,x,c}(\gamma_j) > \delta(j)$. 
Let $\vec{\gamma} = \la \gamma_j \mid j \in y\ra$. $\vec{\gamma} \in V[\Gone*\Gtwo]$.
Since $\Gtwo$ does not add new $\omega-$sequences and $\pzero*\pone$ satisfies $\kappa^{++}.c.c$, it follows that in $V^0$ there is a sequence $\vec{\gamma}^* = \la \gamma_n^* \mid j \in y \ra \in \prod_{j \in y}\theta_j^{c(j)}$
so that $\gamma_j \geq \delta(j)$ for every $j \in y$. 
In particular $i^0_{\alpha,x,c}(\vec{\gamma^*}) \in N^0_{\alpha,x,c} \cap V^0_{\theta+1} = M^0_{\alpha,x,c} \cap V^0_{\theta+1} = M[\Gzero_W] \cap V^0_{\theta+1}$. We therefore define in $M[\Gzero_W]$ a set $D = \{p \in j_W(\pone) \mid i^0_{\alpha,x,c}(\gamma^*_j) \cap p_{\theta^{c(j)}_j} = \emptyset \text{ for some } j \in y\}$. 
$D$ is dense in $j_W(\pone)$ since $y$ is infinite, but it is clear we cannot have $\Gone_W \cap D \neq\emptyset$.

\begin{subclaim}
$\delta(j) \leq \theta_j^{c(j)-1}$ for all but finitely many  $j \in x$.
\end{subclaim}
Suppose otherwise, then there is a countable set $y \subset x$ 
so that $\c(j) \geq 1$ and $\delta(j) > \theta_j^{c(j)-1}$  for all $j \in y$. 
For every $j \in y$ let $\sigma_j \in \power_\omega(\lambda)$ and a function $f_j$
so that $\delta(j) = i^0_{\alpha,x,c}(f_j)(\vec{\theta}^c_{\sigma_j})$.
We may assume that $j \in \sigma_j$ is the maximal ordinal in $\sigma_j$.
We separate the last generator $\theta_j^{c(j)-1}$ from $\vec{\theta}^c_{\sigma_j}$ an write $\vec{\theta}^c_{\sigma_j} = \vec{\theta}^* \fr \la \theta_j^{c(n)-1} \ra$.
If follows that $\delta(j) = i_{\c}(f_j)(\vec{\theta}^* \fr \theta_j^{c(j)-1})$ where
$f_j \in V^0$ is a function, $f_j : \prod_{j' \in y}[\theta_{j'}]^{c(j')} \to \theta_j$. 
Let $k = \sum_{j' \in \sigma_j \setminus\{j\}}c(j') \thinspace + (c(j)-1)$, $k < \omega$, and define $g_j : \theta_j \to \theta_j$ in $V^0$ by
\[ g_j(\mu) = \sup(\{f_j(\vec{\nu}^* \fr \la\mu \ra \mid \vec{\nu}^* \in \mu^{k} \})   \]
It is clear that $\delta(j) \leq i^0_{\alpha,x,c}(g_j)(\theta_j^{\c(j)-1}) < \theta_j^{\c(j)}$. 
Let $E_j \subset \theta_j$ be the set of closure points of $g_j$, then $E_j$ is closed unbounded in $\theta_j$ and $\delta(j) \not\in i_{\c}(E_j)$ as we assumed $\theta_j^{\c(j)-1} < \delta(j)$.\\
We point out that although $E_j \in V^0$ for each $j \in y$, the sequence $\vec{E} = \la E_j \mid j \in y\ra$ may not be in $V^0$. Instead, note that $\vec{E} \in V^1$ as $\ptwo$ does not introduce new $\omega$ sequence, and that $\pzero*\pone$ satisfies $\kappa^{++}.c.c$.
It follows that there is a sequence $\vec{E}^* = \la E_j^* \mid j  \in y\ra \in V^0$ so that $E_j^* \subset E_j$ for each $j \in y$. 
We get that $i^0_{\alpha,x,c}(\vec{E}^*) = \la i^0_{\alpha,x,c}(E_j^*) \mid j \in y\ra$ belongs to $M[\Gzero_W]$, 
and we can therefore define in $M[\Gzero_W]$ the set $D = \{p \in j_W(\pone) \mid  p_{\theta_j} \subset i^0_{\alpha,x,c}(E_j^*)  \text{ for some } j \in y\}$. $D$ is dense in $j_W(\pone)$ since $y$ is infinite, but we cannot have that $D \cap \Gone_W \neq \emptyset$.

\begin{subclaim}
$\delta_j \geq \theta_j^{c(j)-1}$ for all but finitely many  $j \in x$.
\end{subclaim}
Suppose otherwise and let $y \subset x$ be a countable set so that
$\c(j) \geq 1$ and $\delta(j) < \theta_{j}^{\c(j)-1}$ for every $j \in y$. 
Since $M_W \cong \Ult(V^2,W)$ is closed under $\kappa$ sequences, it follows that the 
$\la \theta_j^{\c(j)-1} \mid j \in y\ra$ belongs to $M_W = M[\Gzero*\Gone_W* \Gzero_W]$. Furthermore, $\la \theta_j^{\c(j)-1} \mid j \in y\ra \in M[\Gzero*\Gone]$
because $\ptwo$ does not add new $\omega-$sequence.
Let $\vec{v}$ be a $j_W(\pzero*\pone)$ name for this sequence. By Corollary 
\ref{Corollary - II - guessing omega sequences}
there is a sequence of functions $\vec{\phi} = \la \phi_j \mid j \in y$ in $M[\Gzero_W]$so that for every $j \in y$, $\dom(\phi_j) = \theta_j^{\c(j)}$,  $|\phi_j(\delta(j)))|  < \delta(j)$, and $\theta_j \in \phi_j(\delta(j))$.\\
As an element of $M[\Gzero_W] \cap V^0_{\theta+1} = N^0_{\alpha,x,c} \cap V^0_{\theta+1}$, we see that $\vec{\phi} = i^0_{\alpha,x,c}(F)(\vec{\theta}^c_\sigma)$ for some finite $\sigma \subset \lambda$. 
Fix an ordinal $j \in y \setminus \sigma$. 
Let $c' : \lambda \to \omega$ defined
by $c'(j) = c(j) -1$ and $c'(j') = c(j')$ for every $j' \neq j$. 
We can factor $i^0_{\alpha,x,c} : V^0 \to N^0_{\alpha,x,c}$ into
$i^0_{\alpha,x,c} = k' \circ i^0_{\alpha,x,c'}$ where $k' : N^0_{\alpha,x,c'} \to N^0_{\alpha,x,c} \cong \Ult(N_{c'}, i^0_{\alpha,x,c'}(U_{\theta_j}))$ is the ultrapower embedding of $N^0_{\alpha,x,c'}$ by $i^0_{\alpha,x,c'}(U_{\theta_j})$.
So $\cp(k') = \theta_j^{\c(j)-1}$ and $k'(\theta_j^{\c(j)-1}) = \theta_j^{\c(j)}$.\\
Let $\vec{\phi'} = i^0_{\alpha,x,c'}(F)(\vec{\theta}^c_\sigma)$. It is clear
that $\vec{\phi} = k'(\vec{\phi'})$. 
$\vec{\phi'} = \la \phi'_j \mid j < y\ra$ where
$\phi_j = k'(\phi'_j)$ for every $j \in y$. 
Moreover, since we assumed that $\delta(j) < \theta_j^{\c(j)-1} = \cp(k')$,
we get that $\phi_j(\delta(j)) = k'(\phi'_j(\delta(j)))$.\\
We conclude that $\phi'_j(\delta(j)) \subset \theta_j^{\c(j)-1}$ and  $|\phi'_j(\delta(j))| < \delta(j)$, but this implies $\theta_j^{\c(j)-1} \not\in k'(\phi'_j(\delta(j))) = \phi_j(\delta(j))$, contradicting the above.
 
\begin{subclaim}
$\c(j) =1$ for all but finitely many $j \in x$. 
\end{subclaim}
Otherwise, there would be a countable set $y \subset x$ so that $c(j) >1$ and $\delta(j) = \theta_j^{\c(j)-1}$ for all $j \in y$. 
Let $\vec{\phi} = \la \phi_j \mid j \in y \ra$ be a sequence in $M$, so that $\theta_j^{0} \in \phi_j(\theta_j^{\c(j)-1})$, where
$\phi_j(\theta_j^{\c(j)-1}) \in [\theta_j^{\c(j)}]^{\leq \theta_{j-1}^{c{j-1}}}$.
Let $\sigma \subset \lambda$ be a finite set so that  
$\vec{\phi} = N^0_{\alpha,x,c\uhr \sigma}$, and fix an ordinal $j \in y \setminus \sigma$.
We know there exists a function $f_j : \theta_j \to [\theta_j]^{\leq \theta_{j-1}}$ so that $\phi_j = i_{\c}(f_j)(\vec{\theta}^c_\sigma)$, and we can write
\begin{equation}\label{eq - II - subcase d, c(n) = 1}
\theta_j^{0} \in \left(i^0_{\alpha,x,c\uhr \sigma}(f_j)(\vec{\theta}^c_\sigma)\right)(\theta_j^{c(j)-1})
\end{equation}
Let $i' : V^0 \to M' \cong \Ult(V^0,U^0_{\theta_j})$ be the ultrapower embedding of $V$ by $U^0_{\theta_j}$. 
Note the $f_j \in M'$, so we can see \ref{eq - II - subcase d, c(n) = 1} as a statement of $M'$, where
$\theta_j^1$ is the $j-$th measurable cardinal above $\kappa$, $U^0_{\theta_j^1} = i'(U^0_{\theta_j})$ is the unique normal measure on $\theta_j^1$. Also from the  perspective of $M'$, $\theta_j^{c(j)-1} \geq \theta_j^1$ is the image of $\theta_j^1$ 
under the $c(j)-2 \geq 0$ iterated ultrapower embedding by $U^0_{\theta_j^1}$. 
Back in $V$ we get that 
\[\{\mu < \theta_j \mid \mu \in \left(i^0_{\alpha,x,c\uhr \sigma}(f_j)(\vec{\theta}^c_\sigma)\right)(\theta_j^{c(j)-2})\} \in U^0_{\theta_j}.\]
However, this is impossible as $\left(i^0_{\alpha,x,c\uhr \sigma}(f_j)(\vec{\theta}^c_\sigma)\right)(\theta_j^{c(j)-2})$ has cardinality at most $\theta_{j-1}^{c(j-1)} < \theta_j$. \\
${}$\\

\begin{subclaim}
$c(j) = 0$ for all but finitely many $j \in \lambda \setminus x$.
\end{subclaim}
Suppose otherwise. Let $y \subset \lambda \setminus x$ be a countable set,
so that $c(j) > 0$ for all $j \in y$. Let $\vec{\theta}\uhr y = \la \theta_j \mid j \in y\ra$. $\vec{\theta}\uhr y$ belongs to $M[\Gzero_W * \Gone_W]$ because
$M_W = M[\Gzero_W * \Gone_W * \Gtwo_W]$ is closed under $\omega$ sequences in $V^2$,
and $\Gtwo_W$ does not add new $\omega$ sequences of ordinals.\\

Let us assume that $x\setminus j \neq\emptyset$ for every $j \in y$.
{The case $x\setminus j  =\emptyset$ is treated similarly.}
For every $j \in y$ let $j^* = \min(x \setminus j)$.
We may assume that $c(j^*) = 1$ and that $\delta(j^*) = \theta_{j^*}$.\\

By Corollary \ref{Corollary - II - guessing omega sequences} there is a sequence of functions
$\vec{\psi} = \la \psi_j \mid j \in y\ra$ in $M[G^0_W]$, 
with $\psi_j : \theta_{j^*}^{c(j^*)} \to [\theta_{j^*}^{c(j^*)}]^{|\pone\uhr \theta^0_{j^*}|}$ so that $\theta_{j} \in \psi_j(\delta(j^*)) = \psi_j(\theta_{j^*})$ for every $j \in y$. 
Since the non-trivial $j_W(\pone)$ iteration stages between $\kappa$
and $\theta^+$ are $\theta^{c(j^*)}_{j^*}$, $j^* \in x$, it follows that
$|\pone\uhr \theta^0_{j^*}| < \theta_{j}$ for every $j \in y$. 

Since $\vec{\psi} \in M[G^0_W] \cap V^0_{\theta+1} = N^0_{\alpha,c,x} \cap V^0_{\theta+1}$, there is a finite set $\sigma \subset \lambda$ and $\vec{\psi'} \in N^0{\alpha,x,c\uhr \sigma}$ so that $\vec{\psi} = k^0_{\alpha,x,c\uhr \sigma}(\vec{\psi'})$.
Let $\vec{\psi'} = \la \psi'_j \mid j \in y\ra$.
Pick some $j \in y \setminus \sigma$. We may assume that $j^* \in \sigma$,
thus $k^0_{\alpha,x,\sigma}(\theta_{j^*}) = \theta_{j^*}$. 
It follows that $\theta_j \in k^0_{\alpha,x,c\uhr\sigma}\left(\psi'_j(\theta_{j^*})\right)$.
This is impossible as $\theta_j$ is a critical point of the embedding 
$k^0_{\alpha,x,c\uhr\sigma}$ and
$|\psi'_j(\theta_{j^*})| < \theta_{j}$.\\
\qed[Claim \ref{Claim - W = U2, c,delta are x suitable}]\\
${}$\\

Since $c$ is $x-$suitable and $\delta$ is a local Prikry function with respect to 
$\alpha,x,c$, it follows that the normal measure $U^2_{\alpha,c,\delta}$ exists in $V^2$.

\begin{claim}\label{claim - q compatible with U2 alpha c delta is in G1W}
If $p \in \Gone$ and $q \in j^0_{\alpha,x,c}(\pone)$ is a $(\alpha,\c,\delta,p)-$compatible condition so that $q \setminus \theta^+ = j^0_{\alpha,x,c}(p) \setminus \theta^+$, then $k^0(q)\in \Gone_W$. 
\end{claim}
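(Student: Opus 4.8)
The plan is to match the condition $k^0(q)$ against the explicit generic $\Gone_W = j_W(\Gone)$, whose structure between $\kappa$ and $\theta^+$ was read off in the proof of Theorem~\ref{Proposition - II - W = U c delta -  the omega case} above, using the hypothesis $q\setminus\theta^+ = j^0_{\alpha,x,c}(p)\setminus\theta^+$ to cut the problem down to the initial segment below $\theta^+$. Recall $j^0 := j_W\uhr V^0 = k^0\circ j^0_{\alpha,x,c}$ with $\cp(k^0)>\theta^+$, so $k^0$ fixes $\kappa$, $\theta^+$, $\pone$ and every condition of $j^0_{\alpha,x,c}(\pone)$ of hereditary size $\le\theta^+$, in particular $q\uhr\theta^+$; hence $k^0(q)\uhr\theta^+ = k^0(q\uhr\theta^+)$ and $k^0(q)\setminus\theta^+ = k^0(j^0_{\alpha,x,c}(p)\setminus\theta^+) = j^0(p)\setminus\theta^+$. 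Since $\Gone\in V^2$ and $\Gone_W = j_W(\Gone)$, elementarity gives $j^0(p') = j_W(p')\in\Gone_W$ for all $p'\in\Gone$; in particular $j^0(p)\in\Gone_W$, and (as $j^0(\pone)\uhr\kappa = \pone$ and $j_W``\Gone\subseteq\Gone_W$) $\Gone_W\uhr\kappa = \Gone$. I claim it now suffices to prove $k^0(q)\uhr\theta^+\in\Gone_W\uhr\theta^+$. Granting this, $k^0(q)\uhr\theta^+$ and $j^0(p)\uhr\theta^+$ both lie in the filter $\Gone_W$, hence have a common extension $r\in\Gone_W$; as $r\ge j^0(p)\uhr\theta^+$ and $j^0(p)\in\Gone_W$, $r$ forces $j^0(p)\setminus\theta^+ = k^0(q)\setminus\theta^+$ into the tail generic, so $r\fr(k^0(q)\setminus\theta^+)$ is a condition of $\Gone_W$ above $k^0(q)$, whence $k^0(q)\in\Gone_W$ by upward closure.

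For the reduced statement, recall that the only non-trivial stages of $j^0(\pone)$ strictly between $\kappa$ and $\theta^+$ are the cardinals $\theta_j^{c(j)}$ ($j\in x$), each carrying a one-point Prikry forcing, and that the Prikry point added there in $\Gone_W$ is $\delta(j) = j_W(d)(\theta_j^{c(j)})$ by the definition of $\delta$. Since $\Gone_W\uhr\theta^+$ is $M[\Gzero_W]$-generic for $j^0(\pone)\uhr\theta^+$, and $k^0(q)\uhr\theta^+$, $j^0(\pone)\uhr\theta^+\in M[\Gzero_W]$, it is enough to check that $k^0(q)\uhr\theta^+$ is compatible with every element of $\Gone_W\uhr\theta^+$; its $\kappa$-part being $p\in\Gone=\Gone_W\uhr\kappa$, this amounts to checking that for every $j\in x$ the stage-$\theta_j^{c(j)}$ datum of $k^0(q)$ is consistent with the Prikry point $\delta(j)$. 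For $j$ in the ultrapower support $\sigma(q)$ this is immediate: by Remark~\ref{remark before F completeness} together with clauses (a)--(c) of Definition~\ref{Definition - local Prikry function and compatible conditions}(4), $q$ forces the generic Prikry point at stage $\theta_j^{c(j)}$ to equal $\delta(j)$, and this survives $k^0_{\alpha,x,c\uhr\sigma(q)}$ and $k^0$, which fix the relevant ordinals $\delta(j),\theta_j^{c(j)}<\theta^+$. The remaining case, $j\in x\setminus\sigma(q)$, requires that $\delta(j)$ lie in the measure-one set carried by $k^0(q)$ at stage $\theta_j^{c(j)}$, and this is the main obstacle.

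I would attack it through the local-Prikry property $\delta(j)\in Z_{\alpha,x,c}(j) = i^0_{\alpha,x,c}(Z_{\alpha,x}(j))$. The role of defining $Z_{\alpha,x}(j)$ as $\bigcap\{j^0_{\alpha,x}(f)(\theta_j)\mid f\in\F\}$ — an intersection over $V^0$-functions — is precisely to make $i^0_{\alpha,x,c}(Z_{\alpha,x}(j))$ small enough (a member of $i^0_c(U^0_{\theta_j})$) yet internal enough to be contained in every measure-one set that a $(\alpha,c,\delta,p)$-compatible condition can carry at stage $\theta_j^{c(j)}$: for $j\in x\setminus\sigma(q)$ the measure-one set of $k^0(q)$ at stage $\theta_j^{c(j)}$ is $k^0_{\alpha,x,c\uhr\sigma(q)}$ applied to a measure-one set that $q'$ carries at the stage of $j^0_{\alpha,x,c\uhr\sigma(q)}(\pone)$ corresponding to $\theta_j$, and the hypothesis $q\setminus\theta^+ = j^0_{\alpha,x,c}(p)\setminus\theta^+$ — together with clause (b), $q'\uhr\kappa = p$, and the strengthening underlying Lemma~\ref{Lemma - II - Stronger Definition for F1 c delta alpha} that produces such $q$ — keeps that set equal to the image of $p$'s measure-one set at the matching stage below $\kappa$, which contains $Z_{\alpha,x}(j)$. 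Pushing this through gives $\delta(j)\in i^0_{\alpha,x,c}(Z_{\alpha,x}(j))\subseteq$ (measure-one set of $k^0(q)$ at stage $\theta_j^{c(j)}$), as needed. The genuinely delicate part is keeping the commuting system $i^0_{\alpha,x,c\uhr\sigma(q)}$, $k^0_{\alpha,x,c\uhr\sigma(q)}$, $k^0$ aligned so that the stage-$\theta_j^{c(j)}$ datum of $k^0(q)$ really is the image of the corresponding datum of $q'$, and reconciling the $\pone$-names for $U^1$-measure-one sets occurring in $p$ with the $V^0$-sets ranged over in $\F$. Once compatibility at every stage below $\theta^+$ is verified, genericity of $\Gone_W\uhr\theta^+$ over $M[\Gzero_W]$ yields $k^0(q)\uhr\theta^+\in\Gone_W\uhr\theta^+$, and the reduction of the first paragraph completes the proof.
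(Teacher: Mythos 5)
Your overall plan matches the paper's own: reduce to showing $k^0(q)\uhr\theta^+\in\Gone_W\uhr\theta^+$ (the tail part $k^0(q)\setminus\theta^+ = j^0(p)\setminus\theta^+ = j_W(p)\setminus\theta^+$ then comes for free since $j_W(p)\in\Gone_W$), and for the $\theta^+$-part argue that the Prikry datum of $k^0(q)$ at each $\theta_j^{c(j)}$ is consistent with the generic point $\delta(j)=j_W(d)(\theta_j^{c(j)})$. Your first-paragraph detour through a common extension $r$ is a little longer than necessary but harmless, and the split into $j\in\sigma(q)$ versus $j\in x\setminus\sigma(q)$ is exactly where the content sits.

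The one place your write-up overreaches is in the final paragraph. You assert that the hypotheses $q\setminus\theta^+ = j^0_{\alpha,x,c}(p)\setminus\theta^+$ and $q'\uhr\kappa = p$, together with Lemma~\ref{Lemma - II - Stronger Definition for F1 c delta alpha}, keep $q'_{\theta_j}$ \emph{equal} to the image of $p$'s measure-one set. None of these hypotheses constrain $q'$ at stages in $(\kappa,\theta^+)$: the first says something only above $\theta^+$, the second only below $\kappa$, and the construction in Lemma~\ref{Lemma - II - Stronger Definition for F1 c delta alpha} leaves $q'\uhr\theta^+$ untouched. Clause (a) of Definition~\ref{Definition - local Prikry function and compatible conditions}(4) gives only a one-sided $\geq^*$, so $q'_{\theta_j}$ may be a \emph{proper} subset of the image of $p$'s set, and the inclusion $Z_{\alpha,x,c}(j)\subseteq k^0(q)_{\theta_j^{c(j)}}$ does not follow from what you cite. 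What does hold cleanly, by the critical-point argument you invoke in Remark~\ref{remark before F completeness}, is that $\theta_j,\dots,\theta_j^{c(j)-1}$ always land in $k^0_{\alpha,x,c\uhr\sigma}(q'_{\theta_j})$; this handles the cofinitely many $j\in x\setminus\sigma(q)$ with $\delta(j)=\theta_j$, but not the finitely many exceptional $j$ with $\delta(j)$ a general element of $Z_{\alpha,x,c}(j)$. To close that residual case one has to either absorb those finitely many $j$ into $\sigma(q)$ (which the definition permits but the Claim as stated does not assume) or argue more carefully about how $p_\mu$ reduces to ground-model $U^0_\mu$-sets. To be fair, the paper's own proof is equally terse at exactly this point, simply asserting that $q$ has an extension forcing $d(\theta_j^{c(j)})=\delta(j)$ for every $j\in x$; your version at least names the issue, which is a virtue, but the specific mechanism you propose (``keeps that set equal'') does not hold as stated.
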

For every $p \in \Gone$ we have that $j_W(p) \in \Gone_W$. 
Note that if $q \in j^0_{\alpha,x,c}(\pone)$ is $(\alpha,\c,\delta,p)$ compatible 
then $q \uhr \kappa \in \Gone = \Gone_W \uhr \kappa$. 
We have that for every $j \in x$, $q$ has an extension in $j^0_{\alpha,x,c}(\pone)$ which forces that $\left(j^0_{\alpha,x,c}(\pone)(\name{d})\right)(\theta^{c(j)}_j)
= \can{\delta(j)}$ and by the definition of local Prikry function $\delta$, 
we have $\delta(j) = j_W(d)(\theta^{c(j)}_j)$. 
It follows that $q \uhr \theta^+ \in G^1_W\uhr \theta^+$, and 
as $\cp(k^0) > \theta^+$, we get that
$k^0(q) \uhr \theta^+ \in G^1_W\uhr \theta^+$.
Finally, the assumption that $q \setminus \theta^+ = j^0_{\alpha,x,c}(p) \setminus \theta^+$ implies that $k^0(q)\setminus \theta^+ = j^0(p)\setminus \theta^+ = j_W(p)\setminus \theta^+$. Hence $k^0(q) \in \Gone_W$. 

\qed[Claim \ref{claim - q compatible with U2 alpha c delta is in G1W}]\\
${}$\\

Let us show that $U_{\alpha,\c,\delta} \subset W$. 
Suppose that  $X  = \name{X} \in U^2_{\alpha,\c,\delta}$.
According to Definition \ref{Corollary - sum U2}
 there is a condition $p^2 \in \Gtwo$ so that 
 \begin{equation}\label{equation 1 - W =U alpha,c,delta}
 p^2 \fr (j^1_{\alpha,c,\delta}(p^2)\setminus \kappa+1) \force \can{\kappa} \in j^1_{\alpha,c,\delta}(\name{X}).
 \end{equation}
 The definition of $j^1_{\alpha,c,\delta}$ implies there are $p^1 \in \Gone$ and a
$(\alpha,c,\delta,p^1)-$compatible condition $q^1 \in j^0_{\alpha,x,c}(\pone)$ so
that $q^1$ forces a $j^0_{\alpha,x,c}(\pone)-$statement equivalent to \ref{equation 1 - W =U alpha,c,delta} (where $j^1_{\alpha,c,\delta}$ is replaced with $j^0_{\alpha,x,c}$ and
$p^2,\can{\kappa},\name{X}$ are replaced with their $\pone-$names). Furthermore
by Lemma \ref{Lemma - II - each F^1_omega alpha is kappa complete}
we may assume that
$q^1 \setminus \theta^+ = j^0_{\alpha,x,c}(p^1)\setminus \theta^+$.\\
It follows that $k^0(q^1)$ forces a $j_W(\po^1)-$statement equivalent to
 \begin{equation}\label{equation 2 - W =U alpha,c,delta}
 p^2 \fr (j_W(p^2)\setminus \kappa+1) \force \can{\kappa} \in j_W(\name{X}),
 \end{equation}
and Claim \ref{claim - q compatible with U2 alpha c delta is in G1W} (showing $k^0(q^1) \in \Gone_W$) guarantees \ref{equation 2 - W =U alpha,c,delta} holds. 
Finally, the coding posets in $\ptwo$ and the fact $j``\Gtwo \subset \Gtwo_W$ guarantee that $p^2 \fr (j(p^2)\setminus \kappa+1) \in \Gtwo_W$, hence $X \in W$.
\qed[Theorem \ref{Proposition - II - W = U c delta -  the omega case}]\\

\begin{proposition}\label{Proposition - II - MO in V2}
For every $U^2_{\alpha,\c,\delta}$, $U^2_{\alpha',{\c}',\delta'}$ in $V^2 = V[\Gzero*\Gone*\Gtwo]$, 
$U^2_{\alpha',{\c}',\delta'} \mo U^2_{\alpha,\c,\delta}$ if and only if 
$\alpha' < \alpha$ and $\c' \geq {\c}$.
\end{proposition}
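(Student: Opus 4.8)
The plan is to follow closely the proof of Proposition~\ref{Proposition - II - MO ordering by P}, with the iteration by the measures $U_{\theta_j}$ recorded in $\c$ playing the role of the finite iteration index $n$ there. Recall that $\K(M^2_{\alpha,\c,\delta})=Z_{\alpha,\c}$, which below the first nontrivial $\mu$-stage coincides with $\Ult(V,F_{\alpha,\c})=M_{\alpha,\c}$, and that $F_{\alpha,\c}=i_{\alpha,\c}\circ j_\alpha$ by Lemma~\ref{Lemma - II - F_omega derived from i_omega circ j_F}; in $M_{\alpha,\c}$ the $j$-th measurable above $\kappa$ is $\theta_j^{\c(j)}$, carrying $i_{\c}(U_{\theta_j})$, the extenders on $\kappa$ of support $\theta^+$ are $\{F_{\alpha'',\c}\mid\alpha''<\alpha\}$, and below $\theta_0$ the extender sequence agrees with $V$.

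For the forward implication, assume $\alpha'<\alpha$ and $\c'\geq\c$; I want $U^2_{\alpha',\c',\delta'}\in M^2_{\alpha,\c,\delta}$. Since $\vec F$ is $\mo$-increasing, $F_{\alpha'}\in M_\alpha$, hence $F_{\alpha',\c}\in M_{\alpha,\c}\subseteq M^2_{\alpha,\c,\delta}$, and the measures $i_{\c}(U_{\theta_j})$ lie there as well. Because $\c'\geq\c$, the $\c'$-derived iteration is obtained from the $\c$-derived one by applying each $i_{\c}(U_{\theta_j})$ a further $\c'(j)-\c(j)$ times, so running that iteration inside $M^2_{\alpha,\c,\delta}$ yields $F_{\alpha',\c'}$. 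Using the coding posets in $\pzero$ and $\ptwo$, the closure of $M^2_{\alpha,\c,\delta}$ under $\kappa$-sequences, and $H(\kappa^+)^{M^2_{\alpha,\c,\delta}}=H(\kappa^+)^{V^2}$, the model $M^2_{\alpha,\c,\delta}$ recovers $\Gzero,\Gone,\Gtwo$ together with their images; it also recovers the parameter $\delta'$, which is determined by its finite exceptional set and finitely many ordinals below $\theta^+$ (all elements of $M^2_{\alpha,\c,\delta}$, since its ordinal height exceeds $\theta^+$). Hence it can carry out the constructions of subsections~\ref{II subsection - P1} and~\ref{II subsection - P2}, building $F^0_{\alpha',x',\c'}$, then $F^1_{\alpha',\c',\delta'}$ via $(\alpha',\c',\delta',\Gone)$-compatible conditions, and finally $U^2_{\alpha',\c',\delta'}$ as in Definition~\ref{Corollary - sum U2}. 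Thus $U^2_{\alpha',\c',\delta'}\mo U^2_{\alpha,\c,\delta}$.

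For the converse, let $W=U^2_{\alpha,\c,\delta}$, $W'=U^2_{\alpha',\c',\delta'}$ with ultrapowers $j_W:V^2\to M$, $j_{W'}:V^2\to M'$, and suppose $W'\mo W$. Form $i'=j^M_{W'}:M\to N'\cong\Ult(M,W')$; then $\cp(i')=\kappa$. By Schindler's analysis, exactly as in the proofs of Proposition~\ref{Proposition - II - MO ordering by P} and Theorem~\ref{Proposition - II - W = U c delta -  the omega case}, $\K(N')$ is obtained by a normal iteration $\pi^{\K(M)}_{0,b}=i'\uhr\K(M)$ of $\K(M)=Z_{\alpha,\c}$ whose tree is the comparison tree of $\K(M)$ against $\K(N')$; and since $M\cap H(\kappa^+)=V^2\cap H(\kappa^+)$ we get $E^{\K(N')}\uhr i'(\kappa)=E^{\K(M')}\uhr i'(\kappa)$, so when coiterating $\K(M)=Z_{\alpha,\c}$ with $\K(M')=Z_{\alpha',\c'}$ the $Z_{\alpha',\c'}$-side is not moved below $j_{W'}(\kappa)$; moreover $\Omega'\in W'$ gives $j_{W'}(\kappa)\geq\theta^+$. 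Below the $\mu$-stages these models are $\Ult(V,F_{\alpha,\c})$ and $\Ult(V,F_{\alpha',\c'})$. Suppose $\c\neq\c'$ and let $j^*$ be least with $\c(j^*)\neq\c'(j^*)$; then the first point of distinction is at the index of the measure on $\theta_{j^*}^{\min(\c(j^*),\c'(j^*))}$, which lies below $\theta<j_{W'}(\kappa)$. If $\c'(j^*)<\c(j^*)$ this measure is carried by $Z_{\alpha',\c'}$ but not by $Z_{\alpha,\c}$, so the coiteration would have to move the $Z_{\alpha',\c'}$-side below $j_{W'}(\kappa)$, a contradiction; hence $\c'(j^*)>\c(j^*)$, and running the same argument inductively through all indices where $\c,\c'$ disagree gives $\c'\geq\c$. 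Finally, with $\c'\geq\c$ the coiteration catches up all $\theta_j$-regions of the $Z_{\alpha,\c}$-side using only critical points $>\kappa$, after which the two extender sequences agree through every extender on $\kappa$ of support $\theta^+$ up to index $\alpha$, resp.\ $\alpha'$. If $\alpha'>\alpha$, the extender $F_{\alpha,\c'}\in E^{\K(M')}\uhr j_{W'}(\kappa)$ witnesses a disagreement that would move the forbidden side; if $\alpha'=\alpha$, the sequences agree through every extender with critical point $\kappa$, so the whole iteration $\pi^{\K(M)}_{0,b}$ uses only critical points $>\kappa$, contradicting $\cp(i'\uhr\K(M))=\kappa$. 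Therefore $\alpha'<\alpha$, which completes the proof.

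The step I expect to be the main obstacle is the bookkeeping in the coiteration of $Z_{\alpha,\c}$ with $Z_{\alpha',\c'}$: one must verify that the indices at which the $\theta_j$-regions, the extenders on $\kappa$ of small support, and those of support $\theta^+$ sit are arranged so that the successive first disagreements land exactly where claimed and always below $j_{W'}(\kappa)$, and that the $\mu$-stages (all above $\theta^+$) never interfere — precisely the care already exercised in the proof of Proposition~\ref{Proposition - II - MO ordering by P}. A secondary, more routine point is to confirm, in the forward direction, that every auxiliary object ($\mathcal{F}$, the sets $Z_{\alpha',x',\c'}(j)$, the function $\delta'$, and the Sacks and Prikry data) is genuinely an element of $M^2_{\alpha,\c,\delta}$.
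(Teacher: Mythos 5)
Your proof follows the paper's argument essentially verbatim: in the forward direction, assemble $F_{\alpha',c'}$, the generics, and $\delta'$ inside $M^2_{\alpha,c,\delta}$ and reconstruct $U^2_{\alpha',c',\delta'}$ there; in the converse, coiterate $Z_{\alpha,c}$ with $Z_{\alpha',c'}$, use that the $Z_{\alpha',c'}$-side cannot move below $j_{W'}(\kappa)$, and read off $c'\geq c$ from the measure regions and $\alpha'<\alpha$ from the full $(\kappa,\theta^+)$-extenders. The one imprecise step is the claim $M_{\alpha,c}\subseteq M^2_{\alpha,c,\delta}$ in the forward direction — $M_{\alpha,c}$ is not literally a subclass of the generic extension of its further iterate $Z_{\alpha,c}$; the correct route (which the paper takes, and which still yields your desired conclusion) is that $\cp(k)>\theta^+$ forces $M_{\alpha,c}$ and $Z_{\alpha,c}=\K(M^2_{\alpha,c,\delta})$ to agree on $(\kappa,\theta^+)$-extenders, so $F_{\alpha',c}\in Z_{\alpha,c}\subseteq M^2_{\alpha,c,\delta}$.
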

\begin{proof}
Suppose that $\alpha' < \alpha$  and $\c' \leq \c$, and let us verify that $U^2_{\alpha',c',\delta'} \in M^2_{\alpha,c,\delta} \cong \Ult(V^2,U^2_{\alpha,c,\delta})$. \\
Let $x = \dom(\delta) \subset \lambda$. Then $x \in S'$ and $c$ is a $x-$suitable unction. We described the embedding $j = j^2_{\alpha,c\delta} \uhr V = j^1_{\alpha,c,\delta}\uhr V$ as an iterated ultrapower of $V$, $j : V \to Z_{\alpha,c}$. We know that
\begin{enumerate}
\item  $j = k  \circ j_{\alpha,c}$ where $j_{\alpha,c} : V \to M_{\alpha,c} \cong \Ult(V,F_{\alpha,c})$ and $\cp(k) > \theta^+$. Therefore $M_{\alpha,c}$ and $Z_{\alpha,c}$ share the same $(\kappa,\theta^+)-$extenders. 

\item $Z_{\alpha,c} = \K(M^2_{\alpha,c,\delta})$ and
$M^2_{\alpha,c,\delta} = Z_{\alpha,c}[\Gzero_{\alpha,x,c}*\Gone_{\alpha,c,\delta}*\Gtwo_{\alpha,c,\delta}]$, where $\Gzero_{\alpha,x,c}*\Gone_{\alpha,c,\delta}*\Gtwo_{\alpha,c,\delta}$ is $j(\pzero*\pone*\ptwo)$ generic over $Z_{\alpha,c}$, satisfying $\Gzero = \Gzero_{\alpha,x,c}\uhr \kappa+1$, $\Gone = \Gone_{\alpha,c,\delta}\uhr \kappa$, and $\Gtwo = \Gtwo_{\alpha,c,\delta}\uhr \kappa+1$.
Hence $\Gzero,\Gone,\Gtwo \in M^2_{\alpha,c,\delta}$.

\item 
$j_{\alpha,c} = i_{\alpha,c} \circ j_{\alpha}$
where $j_{\alpha} : V \to M_{\alpha} \cong \Ult(V,F_\alpha)$
and $i_{\alpha,c}$ is the $c-$derived (iterated ultrapower) embedding of $M_{\alpha}$. 
The fact that $\vec{F}$ is $\mo-$increasing implies that $\vec{F}\uhr \alpha = \la F_{\beta} \mid \beta < \alpha\ra \in M_\alpha$, which in turn, implies that
$i_c(\vec{F}\uhr\alpha) = \la F_{\beta,\c} \mid \beta < \alpha \ra$ belongs to $M_{\alpha,c}$, and thus also to $Z_{\alpha,c,} = \K(M^2_{\alpha,c,\delta})$. 
In particular $F_{\alpha',c} \in M^2_{\alpha,c,\delta}$, but $c' \geq c$ so it is clear that $F_{\alpha',c'} \in M^2_{\alpha,c,\delta}$ as well.

\item 
$\delta' \in M^2_{\alpha,c,\delta}$ since $M^2_{\alpha,c,\delta}$ is closed under
$\kappa-$sequences and $\dom(\delta') \subset \lambda \leq \kappa$. 
\end{enumerate}
Since the definition of $U^2_{\alpha',c',\delta'}$ is based on 
$F_{\alpha',c'},\Gzero,\Gone,\Gtwo$, and $\delta'$, we conclude that
$U^2_{\alpha',c',\delta'} \in M^2_{\alpha,c,\delta}$. \\
${}$\\
Suppose next that $U^2_{\alpha',\c',\delta'} \mo U^2_{\alpha,\c,\delta}$. Let
$Z_{\alpha,\c}  = L[E_{\alpha,c}] = \K(M^2_{\alpha,\c,\delta})$ and
$Z_{\alpha',\c'} = L[E_{\alpha',c'}] = \K(M^2_{\alpha',\c',\delta'})$ be the iterated ultrapowers {described in the paragraph proceeding Corollary \ref{Corollary - II - guessing omega sequences}.}
The proof of Proposition \ref{Proposition - II - MO ordering by P} shows that the fact $U^2_{\alpha',\c',\delta'} \mo U^2_{\alpha,\c,\delta}$ implies
that the $Z_{\alpha',\c'}-$side of the coiteration with $Z_{\alpha,\c}$ does not involve ultrapowers on by extenders indexed below $\theta^+$. Moreover, the coiteration must use an ultrapower by a full extender on $\kappa$, on the $Z_{\alpha,c}$.\\
From the description of $Z_{\alpha',\c'}$ and $Z_{\alpha,\c}$ we see that 
the first possible difference between $Z_{\alpha',\c'}$ and $Z_{\alpha,\c}$ is
in the normal measures on the first $\lambda$ measurable cardinals above $\kappa$, which are determined by $c'$ and $c$ respectively. Here the coiteration involves an ultrapower on the $Z_{\alpha',\c'}-$side whenever there is $j \in \lambda$ such that $\c'(j) < \c(j)$, therefore $\c' \geq \c$. 
The iteration on the $Z_{\alpha,\c}-$side will be the $(c - c')$ derived iteration whose critical points are all above $\kappa$. Let us denote the resulting iterand on the $Z_{\alpha,\c}-$side by $L[E']$. 
The next possible disagreement between $E'$ and $E_{\alpha',c'}$ is at the full $(\kappa,\theta^+)$-extenders. The $(\kappa,\theta^+)-$extenders on $E'$ and $E_{\alpha',c'}$ are $\la F_{\beta,c'} \mid \beta < \alpha\ra$ and 
$\la F_{\beta,c'} \mid \beta < \alpha'\ra$ respectively. 
These are the last extenders with critical point $\kappa$ on both sequences. As we know that the coitration must include an ultrapower on the $Z_{\alpha,c}$ side with critical point $\kappa$, we must have that  $\alpha' < \alpha$.
\end{proof}


\subsection{Separation by sets and a Final Cut - $\pX$}\label{II subsection - PX}
\begin{proposition}
The normal measures on $\kappa$ in $V^2$ are separated by sets. 
\end{proposition}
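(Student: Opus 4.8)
The plan is to produce, for each normal measure $W = U^2_{\alpha,c,\delta}$ on $\kappa$ in $V^2$, a set $X_{\alpha,c,\delta}\subset\kappa$ which belongs to $W$ but does not belong to any other normal measure on $\kappa$ in $V^2$; by Theorem \ref{Proposition - II - W = U c delta - the omega case} every normal measure has this form, so this suffices. The parameters to be separated are the triple $(\alpha,x,c)$ together with the local Prikry function $\delta$. I would build $X_{\alpha,c,\delta}$ as a Boolean combination of sets, each of which is seen to lie in $W$ via the generic description of $U^2_{\alpha,c,\delta}$ (Definition \ref{Corollary - sum U2} and the compatibility machinery of Definition \ref{Definition - local Prikry function and compatible conditions}), using the canonical functions: in $V^2$ one has $j^2_{\alpha,c,\delta}(\Theta)(\kappa)=\theta$, $j^2_{\alpha,c,\delta}(\psi_\delta)(\kappa)=\delta$ for the coding functions, and the generic Sacks function reads off $i$ so that $\Omega'_{x}\in W$, where $x=x_i=\dom(\delta)$.

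First I would separate the index $x\in S'$ (equivalently the Sacks value $i$): since $\Omega'_y\in U^2_{\alpha',c',\delta'}$ iff $y=\dom(\delta')$, the set $\Omega'_x$ already isolates the $x$-coordinate. Next I would separate $\alpha$ and $c$. For $\alpha$: the proof of Proposition \ref{Proposition - II - MO in V2} shows $\alpha$ is the largest index of a full $(\kappa,\theta^+)$-extender on $\kappa$ used in the iteration producing $\K(M^2_{\alpha,c,\delta})$; one encodes $\alpha$ by a set coding (via $\Theta^+(\nu)$-collapsing functions at $\nu\in\Omega'_x$) the ordinal $\alpha=j^2_{\alpha,c,\delta}(\text{id})$ reflects to below $\kappa$, i.e.\ a set $A_\alpha$ with $j^2_{\alpha,c,\delta}(A_\alpha)$ deciding the $\alpha$-th-extender-used statement. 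For $c$: each value $c(j)$ is reflected by the local structure of the Prikry points, namely on a measure-one set $\nu\in\Omega'_x$ the number of iterated-Prikry steps below $\Theta_j(\nu)^+$ is a fixed finite number related to $c(j)$ — so a set recording, for each $\nu\in\Omega'_x$, the finite support pattern $\{j : \text{there is a nontrivial stage among }\Theta_j(\nu)\}$ together with multiplicities separates $c$ from any $x$-suitable $c'\neq c$ (they differ at only finitely many coordinates).

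Finally I would separate $\delta$ itself. Using Remark \ref{remark before F completeness}, in $V^1$ the object $\vec d_x:\Omega'_x\to\kappa^{<\kappa}$, $\vec d_x(\nu)=\la d(\Theta_j(\nu))\mid j\in x\cap\nu\ra$, satisfies $j^2_{\alpha,c,\delta}(\vec d_x)(\kappa)=\delta$ modulo the finitely many coordinates in $\sigma^*$; more precisely, for $(\alpha,c,\delta,\Gone)$-compatible conditions $q$ with $\sigma\subset\sigma(q)$ one gets $q\force j^0_{\alpha,x,c}(\name{\vec d}_x)(\can\kappa)\uhr\sigma=\vec\theta^c_\sigma=\la\delta(j)\mid j\in\sigma\ra$. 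So given two distinct local Prikry functions $\delta\neq\delta'$ (with respect to the \emph{same} $\alpha,x,c$, the only remaining case) there is $j\in x$ with $\delta(j)\neq\delta'(j)$; using the functions $\psi_{\delta(j)}$ and $\psi_{\delta'(j)}$ from Definition \ref{Definition - II - psi functions} (adapted to $\ptwo$, so $\{\nu : \psi_{\delta_0}(\nu)\neq\psi_{\delta_1}(\nu)\}$ bounded) one writes a set $D_{\delta}=\{\nu\in\Omega'_x : \vec d_x(\nu)(j)\in \text{range coded by }\psi_{\delta(j)}\}$ that lies in $W$ but not in $U^2_{\alpha,c,\delta'}$. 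Intersecting $\Omega'_x$, $A_\alpha$, the $c$-pattern set, and $D_\delta$ (over the finitely many $j$ where the $\psi$'s can possibly be made to differ, taking the appropriate finite Boolean combination) yields $X_{\alpha,c,\delta}$.

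The main obstacle I anticipate is handling $\delta$ on its \emph{infinite} coordinate set in a single $V^2$-set: $\delta$ agrees with the canonical function $\vec d_x$ only off a finite set $\sigma^*$ that varies with the compatible condition, and the values $\delta(j)$ for $j$ outside $\sigma^*$ are forced to be $\theta_j$ automatically, so those coordinates carry no information and need no separating — it is only the finitely many ``exceptional'' coordinates where $\delta$ can deviate, but one does not know $\sigma^*$ in advance. The resolution is that any two distinct valid $\delta,\delta'$ for the same $(\alpha,x,c)$ differ at some \emph{fixed} $j$, and at that coordinate $\psi_{\delta(j)}\neq\psi_{\delta'(j)}$ on a final segment of $\kappa$, so a single $D_\delta$ using that coordinate's $\psi$-function does the job; one then quantifies over the (countably many, hence harmlessly absorbed) choices, or more cleanly, observes that the full data $(\alpha,x,c,\delta)$ is a bounded object that $j^2_{\alpha,c,\delta}$ reflects, and so a single set coding all of it via the $\ptwo$-collapsing functions at stages in $\Omega'_x$ separates $W$ from all others. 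Verifying the ``not in any other measure'' direction reduces, for each alternative $(\alpha',c',\delta')$, to the case analysis already carried out in the subclaims of Claim \ref{Claim - W = U2, c,delta are x suitable} and in Proposition \ref{Proposition - II - MO in V2}, which is the routine but lengthy part.
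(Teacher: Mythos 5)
Your proposal has the right overall shape and several of the right ingredients: restrict to $\Omega'_x$ to isolate $x$ (correct — $\Omega'_y \in U^2_{\alpha',c',\delta'}$ iff $y = \dom(\delta')$); use the $\Gtwo$-derived canonical functions $\psi_\tau$ with $j^2_{\alpha,c,\delta}(\psi_\tau)(\kappa)=\tau$; read off $\delta$ through the Prikry generic $\vec d_x$; and encode $\alpha$ by a reflected ordinal (the paper does this concretely via $o'(\nu)$, the length of the maximal $\mo$-increasing sequence of $(\nu,\Theta(\nu)^+)$-extenders in $E$, which satisfies $j^2_{\alpha,c,\delta}(o')(\kappa)=o'(\kappa)^{Z_{\alpha,c}}=\alpha$). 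Your treatment of the ``infinite $\delta$'' worry is also sound: the paper simply imposes $d(\Theta_j(\nu))=\psi_{\delta(j)}(\nu)$ for all $j\in x\cap\nu$, and two distinct local Prikry functions over the same $(\alpha,x,c)$ disagree at a fixed $j$, where the $\psi$'s disagree on a tail.

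However, there is a genuine gap in your mechanism for separating $c$. You claim that $c(j)$ is ``reflected by the local structure of the Prikry points,'' that ``the number of iterated-Prikry steps below $\Theta_j(\nu)^+$ is a fixed finite number related to $c(j)$,'' and that recording ``the finite support pattern ... together with multiplicities'' separates $c$. This cannot work: at each $\nu\in\Omega'_x$, the forcing $\pone$ has exactly one one-point Prikry stage at $\Theta_j(\nu)$ for each $j\in x\cap\nu$, and this pattern is determined by $x$ alone — it carries no $c$-information and has no multiplicities. The $c$-data lives entirely in the $c$-derived iteration by the measures $U^0_{\theta_j}$, whose critical points all lie above $\kappa$; it leaves no trace in the generic below $\kappa$. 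The correct encoding — which is the key point of the paper's proof — is the identity $j^2_{\alpha,c,\delta}(\Theta_j)(\kappa)=\theta_j^{c(j)}$: the function $\Theta_j$ itself, not the Prikry data, is what reflects the value $\theta_j^{c(j)}$, and since $\theta_j^{c(j)}\in[\theta_j,\theta_{j+1})$ pins down $c(j)$, the $V^2$-set $\{\nu\in\Omega'_x : \Theta_j(\nu)=\psi_{\theta_j^{c(j)}}(\nu)\text{ for all }j<\lambda\cap\nu\}$ separates $c$. Your fall-back remark that ``the full data $(\alpha,x,c,\delta)$ is a bounded object that $j^2_{\alpha,c,\delta}$ reflects'' points in the right direction, but it does not identify what is being reflected for the $c$-coordinate, which is precisely the missing step.
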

\begin{proof}
Following Definition \ref{Definition - II - psi functions},
let $\la \psi_\tau \mid \tau < \theta^+\ra \subset {}^\kappa\kappa$
be a sequence of representing functions, defined from a sequence of canonical functions  $\la \rho_{\zeta} \mid \zeta < \kappa^+\ra$ and the $\Gtwo-$derived collapsing functions $\la g_\nu : \kappa^+ \onto \Theta(\nu)^+ \mid \nu \in \Omega'\cup\{\kappa\}\ra$. We get that for every $\gamma < \theta^+$ and $U^2_{\alpha,c,\delta} \in V^2$,
$j^2_{\alpha,c,\delta}(\psi_\gamma)(\kappa) = \gamma$.\\
For every ordinal $\nu$, let $o'(\nu) < \Theta(\nu)^+$ be the length
of the maximal sequence $\vec{F_\nu} \subset E^{\K}$ of $(\nu, \Theta(\nu)^+)$ full extenders (note $\K(V^2) = V$ so $E^{\K} = E$). Note that $o'(\kappa)^{M_{\alpha}} = \alpha$ for every $\alpha < l(\vec{F})$.\\

Suppose that $\alpha < \theta^+$, $\delta : \lambda \to \theta$, and that 
 $\c : \lambda \to \omega$ is $x = \dom(\delta)$ suitable.
 Define $X_{\alpha,\c,\delta} \subset \kappa$ to be the set of $\nu \in \Omega'_x$ satisfying
 \begin{enumerate}
 \item  $o'(\nu) = \psi_\alpha(\nu)$,
 \item  $\Theta_j(\nu) = \psi_{\theta^{c(j)}_j}(\nu)$ for every $j < \lambda \cap \nu$,
 and
 \item $d(\Theta_j(\nu)) = \psi_{\delta(j)}(\nu)$ for every $j \in x \cap \nu$.
 \end{enumerate}
 
For every $U^2_{\alpha',\c',\delta'} \in V^2$ it is straightforward to verify
 $X_{\alpha,\c,\delta} \in U^2_{\alpha',c',\delta'}$ if and only if $\alpha = \alpha'$, $\c = \c'$, and  $\delta = \delta'$. 
 \end{proof}

Suppose that $X \subset \kappa$ and let $\pX$ be the final cut iteration by $X$, introduced in
\cite{OBN - Mitchell order I}, Section 7.
It is easy to see that the arguments in the proof of Lemma $7.2$ and Corollary $7.3$ apply to final cut extensions of $V^2$, therefore if $V^2[\GX]$ is a $\pX$ extension of $V^2$ then 
\begin{enumerate}
\item The normal measure on $\kappa$ in $V^2[\GX]$ are of the form $U^X$, where $U$ is a normal measure in $V^2$ with $X \not\in U$, and $U^X$ is its unique normal extension in $V^2[\GX]$.
\item For every $U^X,W^X \in V^2[\GX]$, extending $U,W \in V^2$ respectively, 
$U^X \mo W^X$ if and only if $U \mo W$.
\end{enumerate}

We apply a final cut extension to obtain a model in which $\mo(\kappa) \cong <_{R^*_{\rho,\lambda}}\uhr S$. 
By the definition of $S'$ (Definition \ref{MO II - Definition - S'} ) we know that for every element $(\alpha,c) \in S$, the set
$x = c^{-1}(\{1\})$ belongs to $S'$ and $c = c_x$ is the characteristic function of $x$. Let $\delta_c \in \prod_{j \in x} \theta^1_{j}$ be the local Prikry function, mapping each $j \in x$ to $\delta_c(j) = \theta_j < \theta^1_j = \theta^{c(j)}_j$. $\delta_c$ is clearly a local Prikry function with respect to 
$\alpha,x,c$. Thus $U^2_{\alpha,c,\delta_c}$ exists, and if
$(\alpha',c')$ is an additional element in $S$ then
$U^2_{\alpha',c',\delta_{c'}} \mo U^2_{\alpha,c,\delta}$ if and only if
$(\alpha',c') <_{R^*_{\rho,\lambda}} (\alpha,c)$. \\
As $|S| \leq \kappa$, the final cut Lemma (Lemma $7.4$ in \cite{OBN - Mitchell order I}) implies there is a set $X \subset \kappa$ in $V^2$ so that $\mo(\kappa)^{V^2[\GX]} \cong \mo(\kappa)^{V^2}\uhr \{U^2_{\alpha,c,\delta_c} \mid (\alpha,c) \in S\} \cong <_{R^*_{\rho,\lambda}}\uhr S$. \qed[Theorem \ref{MO II - main theorem}]\\

\noindent Note that small forcings of cardinality $< \kappa$ do not interrupt the construction at $\kappa$. It is therefore possible to apply the main construction 
on different cardinals to obtain a global $\mo$ behavior.
\begin{corollary} Let $V = L[E]$ is a core model.
  Suppose that for every cardinal $\lambda$ in $V$
   there is proper class of measurable cardinals $\kappa$ which carry
  a $\mo-$increasing sequence of $(\kappa,\theta^+)$ extenders $\vec{F} = \la F_\alpha \mid \alpha < \lambda\ra$, where
   \begin{itemize}
   \item $\theta = \sup_{i <\lambda}\theta_i^+$ so that $\theta_i$ is the $i-$th measurable cardinal above $\kappa$, and
   \item each $F_\alpha$ is $(\theta+1)$ strong.
   \end{itemize}
Then there is a class generic extension $V^*$ of $V$, so that every well founded order $(S,<_S)$ in $V^*$ is realized as $\mo(\kappa)$ at some measurable cardinal $\kappa$.
  \end{corollary}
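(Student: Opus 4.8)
The plan is to force with a reverse Easton (class-length) iteration $\po = \langle \po_\xi, \dot\qo_\xi \mid \xi \in \On\rangle$ that performs the construction of Theorem \ref{MO II - main theorem} at a carefully chosen sparse proper class of cardinals, using a bookkeeping device to catch every well-founded order of the final model. Working in $V = L[E]$, fix the definable global well-order and use it to build, along the iteration, a bookkeeping assignment $\xi \mapsto (\eta_\xi, \dot S_\xi)$ that runs, with unbounded repetition, through all pairs consisting of an ordinal $\eta$ and a $\po_\eta$-name $\dot S$ for a well-founded order. Simultaneously define an increasing continuous class sequence $\langle \kappa_\xi \mid \xi \in \On\rangle$ of \emph{active stages}: having reached stage $\xi$, evaluate $\dot S_\xi$ (if $\eta_\xi < \xi$ and the relevant generic is available), let $\lambda_\xi$ be a cardinal with $\lambda_\xi \geq |S_\xi|$ and $\lambda_\xi \geq |\rank(S_\xi,<_{S_\xi})|$, and choose $\kappa_\xi$ so large that (i) $\kappa_\xi$ exceeds the reach $\theta^{(\xi')+}$ of every earlier active stage $\xi' < \xi$ (where $\theta^{(\xi')}$ is the ``$\theta$'' of the stage-$\xi'$ construction), and (ii) $\kappa_\xi \geq |S_\xi|$ carries, in $V$, a $\mo$-increasing sequence of $(\kappa_\xi,\theta^+)$-extenders of length $\lambda_\xi$ with the $(\theta+1)$-strength and overlapping properties of the hypothesis — such $\kappa_\xi$ exist because, by hypothesis, for the cardinal $\lambda_\xi$ the suitable cardinals form a proper class, hence are unbounded. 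At stage $\xi$ we let $\dot\qo_\xi$ name the poset $\pzero * \pone * \ptwo * \pX$ of the proof of Theorem \ref{MO II - main theorem}, carried out at $\kappa_\xi$ for the order $S_\xi$ (and the trivial forcing if $S_\xi$ is unavailable or $|S_\xi| > \kappa_\xi$); we use Easton support.

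The two preservation facts, both instances of the remark preceding the Corollary together with routine reverse-Easton bookkeeping, are as follows. \textbf{(1)} At an active stage $\xi$ the iteration $\po_\xi$ below $\xi$ factors as a forcing of size $< \kappa_\xi$ (by clause (i)) followed by trivial forcing up to $\kappa_\xi$; since the core model is absolute under set forcing ($\K(V[G_\xi]) = L[E]$) and the extenders $\vec F$ at $\kappa_\xi$ survive to $V[G_\xi]$ by a Levy--Solovay argument, the hypotheses of Theorem \ref{MO II - main theorem} hold over $V[G_\xi]$, and the stage-$\xi$ forcing produces an extension $V[G_{\xi+1}]$ in which $\mo(\kappa_\xi) \cong S_\xi$. \textbf{(2)} The stage-$\xi$ forcing acts only on cardinals in $[\kappa_\xi, \theta^{(\xi)+}]$, and by clause (i) every later active stage — hence every later nontrivial stage — occurs at cardinals $\gg \theta^{(\xi)+}$, where the Sacks, collapsing, Prikry and coding posets involved are highly closed; consequently the tail $\po_{\geq \xi+1}$ adds no new subsets of $\power(\kappa_\xi)$, so it adds no new normal measures on $\kappa_\xi$ and leaves $\mo(\kappa_\xi)$ unchanged. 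Therefore $\mo(\kappa_\xi)^{V^*} = \mo(\kappa_\xi)^{V[G_{\xi+1}]} \cong S_\xi$, where $V^* = V[G]$ is the full class extension.

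It then remains to verify that every well-founded order of $V^*$ is realized. If $(S,<_S) \in V^*$, then since $\po$ is a tame reverse Easton iteration, $S$ has a $\po_\eta$-name $\dot S$ for some ordinal $\eta$. By the unbounded repetition of the bookkeeping and the unboundedness of the suitable cardinals for $|S|$, choose an active stage $\xi > \eta$ with $\eta_\xi = \eta$, $\dot S_\xi = \dot S$, and $\kappa_\xi \geq |S|$; then $S_\xi = S \in V[G_\xi]$, the stage-$\xi$ forcing realizes $\mo(\kappa_\xi) \cong S$ in $V[G_{\xi+1}]$, and by \textbf{(2)} this persists to $V^*$. Hence $(S,<_S)$ is isomorphic to $\mo(\kappa_\xi)$ at the measurable cardinal $\kappa_\xi$ of $V^*$.

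The main obstacle I expect is bundling the two preservation facts cleanly. For \textbf{(1)} one must check that the entire analysis of Theorem \ref{MO II - main theorem} — the construction of the extenders $F^0_{\alpha,x,c}$, $F^1_{\alpha,c,\delta}$, the $\kappa$-completeness argument, Schindler's comparison-based analysis of the normal measures in $V^2$, Proposition \ref{Proposition - II - MO in V2}, and the final cut — relativizes verbatim from $L[E]$ to the small-forcing extension $V[G_\xi]$, using only that $\K(V[G_\xi]) = L[E]$ and that $G_\xi$ is generic for forcing of size $< \kappa_\xi$. For \textbf{(2)} one must organize the closure/distributivity bookkeeping so that making the gaps $\kappa_{\xi+1} \gg \theta^{(\xi)+}$ large does not conflict with the requirement that $\kappa_{\xi+1}$ still carry the needed overlapping extenders — which is exactly what the properness of the class of suitable cardinals in the hypothesis provides — and so that the nontrivial stages of the tail, occurring at cardinals far above $\kappa_\xi$, are in fact $\kappa_\xi^{++}$-closed as a block, which follows from the analyses of $\pzero$, $\pone$, and $\ptwo$ already carried out above.
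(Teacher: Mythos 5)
Your overall template — reverse Easton class iteration, bookkeeping through names for well-founded orders, Levy--Solovay at each active stage, then a tail-preservation fact — is the right shape and is exactly what the paper's one-line remark intends. But your preservation fact \textbf{(2)} rests on a false description of the posets involved, and as written the argument breaks.

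You assert that ``the stage-$\xi$ forcing acts only on cardinals in $[\kappa_\xi, \theta^{(\xi)+}]$,'' and consequently that all nontrivial stages of the tail $\po_{\geq\xi+1}$ occur far above $\kappa_\xi$. Neither claim is true. The posets $\pzero$, $\pone$, $\ptwo$ at $\kappa_{\xi'}$ are \emph{iterations up to $\kappa_{\xi'}+1$} (nonstationary support), with nontrivial components at every $\nu\in\Omega'\cup\{\kappa_{\xi'}\}$, i.e.\ at every inaccessible limit of measurables $\nu\leq\kappa_{\xi'}$, starting from far below $\kappa_\xi$. In particular, for $\xi'>\xi$ the stage-$\xi'$ forcing has a nontrivial component at $\nu=\kappa_\xi$ itself (a measurable, hence inaccessible limit of measurables): $\pzero^{(\xi')}$ adds new Sacks generics at $\kappa_\xi$, $\pone^{(\xi')}$ adds Prikry points at the $\Theta_j(\kappa_\xi)$'s, and $\ptwo^{(\xi')}$ collapses $\Theta(\kappa_\xi)^+$ to $\kappa_\xi^+$. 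This adds new subsets of $\kappa_\xi$ and radically changes the cardinal structure at $\kappa_\xi$, so $\mo(\kappa_\xi)$ does \emph{not} stabilize after stage $\xi+1$; the normal measures produced there need not even extend to $V^*$, and the tail is certainly not $\kappa_\xi^{++}$-closed. Your ``block closure'' argument therefore does not get off the ground.

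To repair this you must prevent later stages from re-forcing below the reach $\theta^{(\xi)+}$ of earlier active stages. The natural way is either (a) to \emph{truncate} the stage-$\xi'$ posets $\pzero^{(\xi')},\pone^{(\xi')},\ptwo^{(\xi')}$ so that their nontrivial components occur only at $\nu$ strictly above $\sup_{\xi<\xi'}\theta^{(\xi)+}$, and then re-run the analysis of Theorem \ref{MO II - main theorem} for these truncated iterations over $V[G_\xi]$ (this is plausible — the reflection and ultrapower-extension arguments only need the forcing below $\kappa_{\xi'}$ to behave correctly and have size $<\kappa_{\xi'}$, which the truncated iteration plus the earlier stages of $\po$ provide — but it is not ``verbatim'' and must be argued), or (b) to replace the stage-by-stage scheme with a single global nonstationary-support iteration in the Friedman--Magidor style, assigning to each $\nu\in\Omega'$ a single amalgamated local poset (Sacks, collapse, Prikry, coding for whatever $\kappa_\xi$'s have $\nu<\kappa_\xi$), and carrying out the analysis once. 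Until one of these repairs is made, the claim that $\mo(\kappa_\xi)^{V^*}=\mo(\kappa_\xi)^{V[G_{\xi+1}]}$ is unsupported, and so is the corollary. (A smaller point: the sequence $\langle\kappa_\xi\rangle$ cannot literally be continuous, since a limit of your active measurables need not itself carry overlapping extenders; you presumably mean increasing and unbounded, with trivial forcing at limits, but this should be said.)
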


\section{Open Questions}\label{Section - II - Open Problems}
 
We conclude the two-part study, presented here and in \cite{OBN - Mitchell order I}
with a few questions: \\
 
\noindent {\bf Question 1}. Is it possible to realize every well-founded order of size $\kappa^+$
as $\mo(\kappa)$? \\

\noindent {\bf Question 2}. What is the consistency strength of $\mo(\kappa) \cong R_{2,2}$, and of
 $\mo(\kappa) \cong S_{\omega,2}$? Are overlapping extenders necessary? \\

\noindent  {\bf Question 3}. What is the consistency strength of $\mo(\kappa) \cong S_{2,2}$?
\renewcommand*{\zc}{0}%
 \renewcommand*{\oc}{1}%
\begin{center}
\begin{tikzpicture}[xscale=0.5, yscale=0.6]
    \draw  node [] at (\zc,\zc) {$\bullet$};

      \draw  node [] at (\zc,\oc) {$\bullet$};

      \draw  node [] at (\oc,\zc) {$\bullet$};

      \draw  node [] at (\oc,\oc) {$\bullet$};
 
      \draw [thick,black] (\zc,\zc) -- (\zc,\oc);      
      \draw [thick,black] (\oc,\zc) -- (\oc,\oc);      
       \draw [thick,black] (\zc,\oc) -- (\oc,\zc);       
\end{tikzpicture}
\end{center}

\noindent Note that $\rank(S_{2,2}) = 2$ but $\tamerank(S_{2,2}) = 3$, therefore the method in Part I can realize $S_{2,2}$ as $\mo(\kappa)$ from the assumption of $o(\kappa) \geq 3$.

${}$\\
${}$\\
\noindent\textbf{Acknowledgements -} \\

\noindent The author would like to express his gratitude to his supervisor Professor Moti Gitik,
 for many fruitful conversations, valuable guidance and encouragement.\\
 
\noindent The author is grateful to the referee for making many valuable suggestions and comments which 
greatly improved both the content and structure of this paper.  \\

\noindent The author would also like to thank Brent Cody and Dror Speiser for their valuable remarks and suggestions.

\raggedright


\begin{thebibliography}{xxx}


\bibitem{Apter Cummings Hamkins} 
Arthur Apter, James Cummings, and Joel Hamkins, {\em Large cardinals with few measures}, 
Proceedings of the American Mathematical Society, vol. 135 (7) (2007), 2291-2300.

\bibitem{Baldwin} Stewart Baldwin, {\em The $\mo$-Ordering on Normal
ultrafilters}, The Journal of Symbolic Logic, {\bf50} (1985)
,936-952.

\bibitem{OBN - Magidor Iteration} Omer Ben-Neria, {\em Forcing Magidor iteration over a core model below $0^\P$}, 
to appear in the Archive for Mathematical Logic.


\bibitem{OBN - Mitchell order I} Omer Ben-Neria, {\em The structure of the Mitchell order - I}, 
to appear.

\bibitem{Cummings I} James Cummings {\em Possible behaviours for the Mitchell ordering}, 
Annals of Pure and Applied Logic, Volume 65 (2) (1993), 107-123.

\bibitem{Cummings II} James Cummings {\em Possible Behaviours for the Mitchell Ordering II}, 
Journal of Symbolic Logic, Volume 59 (4) (1994), 1196-1209.


\bibitem{Friedman-Magidor - Normal Measures} Sy-David Friedman and Menachem Magidor, 
{\em The number of normal measures}, The Journal of Symbolic Logic, {\bf74} (2009)
,1069-1080.


\bibitem{Kunen} Kenneth Kunen, {\em Some application of iterated ultrapowers in set theory},
Annals of Mathematical Logic {\bf1} (1970), 179-227.

\bibitem{Kunen-Paris} Kenneth Kunen and Jeffery Paris {\em Boolean extensions and measurable cardinals}, Annals of Mathematical Logic {\bf2} (1970/71), 359-377.


\bibitem{Leaning} Jeffery Leaning and Omer Ben-Neria
{\em Disassociated indiscernibles},
Mathematical Logic Quarterly {\bf 60} (2014) , 389-402.

\bibitem{Levy-Solovay} Azriel Levy, Robert Solovay {\em Measurable cardinals and the continuum hypothesis},
Israel J. Math., {\bf5} (1967),  234-248.

\bibitem{Mitchell - MO} William Mitchell, {\em Sets Constructed from Sequences of Measures: Revisited},
The Journal of Symbolic Logic, Volume 39 (1) (1974),  57-66.

\bibitem{Mitchell - MO-rev} William Mitchell, {\em Sets Constructed from Sequences of Measures: Revisited},
The Journal of Symbolic Logic, Volume 48 (3) (1983),  600-609.


\bibitem{Neeman - MO} Itay Neeman, {\em The Mitchell order below rank-to-rank},
The Journal of Symbolic Logic, Volume 69 (4) (2004),  1143-1162.


\bibitem{Schindler - ICM} Ralf Schindler, {\em Iterates of the Core Model}, 
The Journal of Symbolic Logic, Volume 71 (1) (2006),  241 - 251.

\bibitem{Steel - HB} John Steel, {\em An Outline of Inner Model Theory
},  Handbook of set theory (Foreman, Kanamori editors), Volume 3, 1601-1690.

\bibitem{Steel - MO} John Steel, {\em The Well-Foundedness of the Mitchell Order}, 
The Journal of Symbolic Logic, Volume 58 (3) (1993),  931-940.


\bibitem{Witzany} Jiri Witzany, {\em Any Behaviour of the Mitchell Ordering of Normal Measures is Possible}, Proceedings of the American Mathematical Society,
Volume 124 (1) (1996),  291-297.


\end{thebibliography}
\end{document}